\numberwithin{equation}{section}
\theoremstyle{plain}
\newtheorem{theorem}{Theorem}[section]
\newtheorem{lemma}[theorem]{Lemma}
\newtheorem{prop}[theorem]{Proposition}
\newcommand{\R}{\mathbb R}
\newcommand{\E}{\mathbb E}
\newcommand{\C}{\mathrm Cov}
\theoremstyle{remark}
\newtheorem{remark}{Remark}
\theoremstyle{definition}
\newdimen\AAdi%
\newbox\AAbo%
\def\AAk#1#2{\setbox\AAbo=\hbox{#2}\AAdi=\wd\AAbo\kern#1\AAdi{}}%
\begin{document}



\title{\textit{Nonparametric local linear estimation of the relative error regression function for censorship model}}

\author{
\name{F. Bouhadjera\textsuperscript{a,b}$^{\ast}$\thanks{$^\ast$Corresponding author. Email: feriel.bouhadjera@etu.univ-littoral.fr},
E. Ould Sa\"id\textsuperscript{c}}
\affil{\textsuperscript{a}UBMA, LaPS. BP 12, 23000 Annaba, Alg\'erie.; \textsuperscript{b}ULCO, LMPA, 50 Rue Ferdinand Buisson, Calais, 62228, France; \textsuperscript{c}ULCO, LMPA,  IUT de Calais. 19, rue Louis David. Calais, 62228, France.}
}

\maketitle

\begin{abstract}
In this paper, we built a new nonparametric regression estimator with  the local linear method by using  the mean squared relative error as a loss function when  the data are subject to random right censoring. We establish the uniform almost sure consistency with rate over a compact set of the proposed estimator.
Some simulations are given to show the asymptotic behavior of the estimate in different cases.
\end{abstract}

\begin{keywords}
Censored data, local Linear fit, mean squared relative error, regression function, survival analysis, uniform almost sure convergence.
\end{keywords}

\begin{classcode}
MSC: 62G05, 62G08, 62G20, 62N01, 62N02.
\end{classcode}

{\abstractfont\centerline{\bfseries Table of contents}\vspace{12pt}
\hbox to \textwidth{\hsize\textwidth\vbox{\hsize24pc
\hspace*{-12pt} {1.}  Introduction\\
{2.}    The model \\
{3.}    Hypotheses and main results\\
\hspace*{10pt}{3.1.}  Comments on the Hypotheses\\
{4.}    Numerical study\\
{5.}    Proofs and auxiliary results\\
{6.}    Appendix\\
{7.}    Conclusion\\}}}

\section{Introduction}
When it comes to analyzing the dependence between two random variables (r.v.), regression models have appeared as a common and flexible tool in various disciplines, such as biology, medicine, economics, insurance. Consider a random vector $(T,X)$ taking values in $\R^+_* \times \R$ where $T$ is the interest r.v. with unknown distribution function (d.f.) $F$ and $X$ is the covariate considered having a  density function $f(\cdot)$. In practice, it is well-known that we have to study the association between covariates and responses according to the following relation:
\begin{equation*}
T=\mu(X)+\varepsilon
\end{equation*}
where $\mu(X)=\E[T|X]$ denotes the regression function which appears as a quantity that contains all the information about the dependence structure and $\varepsilon$ is the unobservable error term independent of $X$. Generally $\mu(X)$ is obtained by the minimization of $\E[(T-\mu(X))^2|X]$. The resulting estimator enjoys some important optimality, such as simplicity, flexibility, and consistency. However, this last loss function is inefficient to the presence of outliers in data, which is a common case in practical situations. \\
The aim of the present paper is to propose a new approach which  reduce these drawbacks. Relative error estimation has been recently used in regression analysis as an alternative to the restrictions imposed by the classical regression approach, which consist by considering the estimation of the regression function $\mu$ by minimizing  the following mean squared relative error loss function, that is,  for $T>0$
\begin{equation}\label{criter_RER}
\displaystyle \E\left[\left(\frac{(T-\mu(X))}{T}\right)^2\Big|X \right].
\end{equation} 
This criterium has been widely studied for parametric models, we refer to \cite{Chen2010} for a discussion about the previous works and \cite{Hirose2018} for a real example on the electricity consumption. When the first two conditional inverse moments of $T$ given $X$ are finite, \cite{Park1998} showed that the solution of (\ref{criter_RER}), for any fixed $x$, is given by the following ratio 
\begin{equation}\label{mu}
\displaystyle \mu(x)=\frac{\E[T^{-1}|X=x]}{\E[T^{-2}|X=x]}=:\frac{\mu_1(x)}{\mu_2(x)}
\end{equation}
where $\displaystyle \mu_{\ell}(x)=r_{\ell}(x)/f(x) $ and $\displaystyle r_{\ell}(x)=\int_{\R} t^{-\ell}f_{T,X}(t,x)dt$ for $\ell=1,2$ with $\displaystyle f_{T,X}(\cdot,\cdot)$ and $\displaystyle f_{X}(\cdot)$ are  the joint and  marginal density of the couple $(T,X)$ and $X$ respectively. For recent works, there have been some literature devoted to the relative error regression (RER) methods for complete data. \cite{Chahad2017} considered the estimation of the regression function for a functional explanatory variable while \cite{Attouch2017} have looked to the case where the data are from a strictly stationary spacial process. \cite{Thiam2018} constructed an estimator based in a deconvolution problem. \cite{Hu2019} established the consistency and the asymptotic normality of the regression function based on a least product relative error. \\  
It is well-known that the local linear method has several advantages over the classical kernel smoothing. In particular, it allows to reduce the bias term and avoid the boundary effects. The local linear smoother is not only superior to the popular kernel regression estimator, but also it is the best among all linear smothers, including those produced by orthogonal series and spline methods. A detailed introduction on the importance of the local linear approach can be found in \cite{Fan1992}, \cite{Fan1996} for the univariate case and \cite{Fan2003} for the multivariate case. For recent works on local linear method, we refer to \cite{Jones2008} for independent data and \cite{ElGhouch2008, ElGhouch2009} for regression and quantile regression respectively in the dependent framework.\\
All these works concern the complete data except the last two articles. In many situations, the data can not  be observed completely.  Important examples are the survival time of patients or the unemployment time and many others in different fields. A frequent problem in survival analysis is right-censoring, which may be due to different causes: the loss of some subjects under study, the end of the follow up period. Examples of situations where this kind of data occur can be found in \cite{Klein2006}. \\
In this paper, we suggest a new estimator based on the local linear method of the nonparametric relative error regression (LLRER) estimator when the data are censored. 
We extend the work of \cite{Jones2008} to the censoring framework by stating a strong result. We point out that in the last paper, only a pointwise of the bias and variance terms have been investigated. We establish that the new estimator is uniformly almost sure consistent with rate over a compact set under appropriate conditions. Simulation experiments emphasize that the LLRER, is highly competitive to the existing estimators for regression function. To the best of our knowledge, this problem is open up to now and there is no analogous result.\\
This paper is organized as follows. The general idea of the local linear fit of the mean squared relative error regression function in the censoring framework is described in \hyperref[sect 2]{Section 2}. Assumptions and theoretical results are given in \hyperref[sect 3]{Section 3} and some simulation results that illustrates the performance of the proposed procedure are given in \hyperref[sect 4]{Section 4}. Finally, \hyperref[sect 5]{Section 5} is devoted to auxiliary results and technical details.	
\section{The model}\label{sect 2}
According to the right-censoring model, instead of observing $T$ we only observe $(Y,\delta)$ where $\displaystyle Y=\min(T,C)$ and $\displaystyle \delta=\mathds{1}_{\{T \leq C\}}$, here $\displaystyle \mathds{1}(\cdot)$ is the indicator function. The r.v. $C$ represent the censoring time which is independent of $T$ and with d.f. $G$. The observed data becomes $(Y,\delta,X)$. From now on, we will always make the following assumption:
\begin{equation}\label{indep}
(T,X)\;\; \textrm{and}\;\; C \;\; \textrm{are independent.}
\end{equation}
This assumption is required to make the estimation of the censoring distribution easier; However, it is reasonable only when the censoring is not associated to the characteristic of the individuals under study. Let $\{(Y_i,\delta_i,X_i),\;i=1,\dots,n\}$ be $n$ independent and identically distributed vectors as $(Y,\delta,X)$. Our main aim is to estimate the RER function defined in (\ref{mu}) using the local linear fit. The extension of nonparametric local linear procedures to the censored framework requires to replace the unavailable data by a suitable construction of the observed data given by 
\begin{equation}\label{synt_data}
\displaystyle T^{\star,-\ell}_i=\frac{\delta_i Y_i^{-\ell}}{\overline{G}(Y_i)} \quad\quad \text{for} \quad 1 \leq i \leq n
\end{equation}
where $\overline{G}(\cdot)=1-G(\cdot)$ denotes the survival function of the r.v. $C$. The later are called "synthetic data" and permits to consider the effect of censoring in the distribution (for more details, we refer to \cite{Carbonez1995} and \cite{Kohler2002}). In this spirit, based on this construction of the data, using the conditional expectation property and under the Assumption (\ref{indep}), for $\ell=1,2$ we have
\begin{equation*}\label{synt}
\begin{aligned}
\displaystyle 	\E[T^{\star,-\ell}_1|X_1]&=\E\left[\frac{\delta_1Y_1^{-\ell}}{\overline{G}(Y_1)}\Big|X_1\right]\\
&\displaystyle =\E\left[\frac{T_1^{-\ell}}{\overline{G}(T_1)}\E\left[\mathds{1}_{\{T_1 \leq C_1\}}|T_1\right]X_1\right]\\
&\displaystyle =\E[T_1^{-\ell}|X_1].
\end{aligned}
\end{equation*}
\noindent Modeling by the local linear method (see \cite{Fan1992}), assumes that the twice derivative of $\mu(x)$ at the point $x$ exists and is continuous, so that $\mu(X)$ can be approximated by a linear function that is, $\mu(X) \approx \mu(x)+\mu^{\prime}(x)(X-x)=:\beta_1+\beta_2(X-x)$. Then, the RER function (\ref{mu}) is estimated as the solution of the following optimization problem : 
\begin{equation}\label{argmin}
\arg \min_{\beta_1,\beta_2} \left\{\sum_{i=1}^{n} T^{\star,-2}_i(T^\star_i-\beta_1-\beta_2(X_i-x))^2 K_h(X_i-x)\right\}
\end{equation}
where $\displaystyle K_h(\cdot):=K\Big(\frac{\cdot}{h}\Big)$ is a kernel function appropriately chosen (Epanechnicov, Gaussian, $\ldots$ ) and $h:=h_n$ is a sequence of positive real numbers which converges to $0$ when $n$ goes to infinity. By elementary calculus, the solution of the least squares problem (\ref{argmin}) yields to 
\begin{equation}\label{mu_tilde}
\widetilde{\mu}(x)=\frac{\displaystyle \sum_{i,j=1}^{n}w_{i,j}(x)T^{\star}_j}{\displaystyle \sum_{i,j=1}^{n}w_{i,j}(x)}=:\frac{\displaystyle \widetilde{\mu}_{1}(x)}{\displaystyle \widetilde{\mu}_2(x)}
\end{equation}
where
\begin{equation}\label{w_tilde}
w_{i,j}(x)=(X_i-x)\left((X_i-x)-(X_j-x)\right) K_h(X_i-x)K_h(X_j-x) T^{\star,-2}_i T^{\star,-2}_j.
\end{equation}
Of course in data analysis, the survival function $\overline{G}(\cdot)$ is unknown and needs to be estimated. This can be done via Kaplan-Meier (KM) as  an estimator of $\overline{G}(\cdot)$ (see: \cite{Kaplan1958})
\begin{equation}\label{K-M}
\overline{G}_n(t)=
\left\{
\begin{array}{cl}
\displaystyle \prod_{i=1}^{n}{\left(1-\frac{1-\delta_{i}}{n-i+1}\right)}^{\mathds{1}_{\{Y_{i}\leq t\}}} & \quad \text{if} \quad t<Y_{(n)}, \\
0 & \quad \text{otherwise}
\end{array}
\right.
\end{equation}
where $Y_{(1)}\leq Y_{(2)}\leq \dots \leq Y_{(n)}$ are the order statistics of the $Y_{i}$ and $\delta_{i}$ is the indicator of non-censoring. The properties of $\overline{G}_n(t)$ have been studied by many authors. So, (\ref{synt_data}) becomes, for $1 \leq i \leq n$,
\begin{equation}\label{sythetic}
\widehat{T}^{*,-\ell}_i=\frac{\delta_i Y_i^{-\ell}}{\overline{G}_n(Y_i)}.
\end{equation}
Replacing (\ref{sythetic}) in (\ref{mu_tilde}) and (\ref{w_tilde})  we get  a feasible local linear estimator of the relative error regression  function (LLRER)  expressed as
\begin{equation}\label{mu_hat}
\widehat{\mu}(x)=\frac{\displaystyle \sum_{i,j=1}^{n}w_{i,j}(x)\widehat{T}^{\star}_j}{\displaystyle \sum_{i,j=1}^{n}w_{i,j}(x)}=:\frac{\displaystyle \widehat{\mu}_{1}(x)}{\displaystyle \widehat{\mu}_2(x)}
\end{equation}
where
\begin{equation}\label{w_hat}
w_{i,j}(x)=(X_i-x)\left((X_i-x)-(X_j-x)\right) K_h(X_i-x)K_h(X_j-x) \widehat{T}^{\star,-2}_i \widehat{T}^{\star,-2}_j.
\end{equation}
\begin{remark} 
	In what follows, we will adopt the convention $0/0=0$ in such a case that if, for example, $\widehat{\mu}_1(\cdot)=0$ and $\widehat{\mu}_2(\cdot)=0$, the ratio $\widehat{\mu}_1(\cdot)/\widehat{\mu}_1(\cdot)$ in (\ref{mu_hat}) will be interpreted as zero.
\end{remark}
\noindent Throughout this paper, we denote by $\tau_F:=\sup\{x: \overline{F}(x)>0 \}$ and $\tau_G:=\sup\{ x : \overline{G}(x)>0 \}$ be the right support endpoints of $\overline{F}$ and $\overline{G}$, respectively.  We assume that $\tau_F <\infty$, $\overline{G}(\tau_F)>0$ that implies $0<\tau_F \leq \tau_G$, which were also assumed in \cite{Guessoum2008}.
\begin{remark}
	In the simulation part, we will compare our estimator with the classical regression estimator using the local linear method (LLCR). The later is the solution of the following minimization problem:
	\begin{equation*}
	\arg \min_{\alpha,\; \beta} \left\{\sum_{i=1}^{n} \left(\widehat{T}^\star_i-\alpha-\beta(X_i-x)\right)^2 K_h(X_i-x)\right\}
	\end{equation*}
	for $\widehat{T}^\star$ in (\ref{sythetic}), which gives
	\begin{equation}\label{LLCR}
	m_n(x)=\frac{ \displaystyle \sum_{i,j=1}^{n} v_{i,j}(x)\widehat{T}^{\star}_j}{\displaystyle \sum_{i,j=1}^{n} v_{i,j}(x)}
	\end{equation}
	where 
	\[ v_{i,j}(x)=(X_i-x)\left((X_i-x)-(X_j-x)\right) K_h(X_i-x)K_h(X_j-x).\]
\end{remark}
\begin{remark} 
	\begin{itemize}
		\item[$1)$] We point out that for complete data, i.e. we replace $\widehat{T}^{\star}$ by $T$ in (\ref{mu_hat}) and (\ref{w_hat}), we obtain the estimator defined in \cite{Jones2008}.
		\item[$2)$]  Likewise, if we replace $\widehat{T}^{\star}$ by $T$ in (\ref{LLCR}), we obtain the estimator defined in \cite{Nadaraya1964} and \cite{Watson1964}. 
	\end{itemize}
\end{remark}
\begin{remark}
	A crucial point in censored regression is to extend the identifiability assumption on the independence of $T$ and $C$ defined in (\ref{indep}) to the case where the explanatory variables are present. In this spirit of KM estimator, one may impose that $T$ and $C$ are independent conditionally to $X$. Then, (\ref{sythetic}) becomes 
	\begin{equation}\label{conditionel}
	\widehat{T}^*_i= \frac{\delta_i Y_i}{\overline{G}_n(Y_i|X_i)} 
	\end{equation}
	where $\overline{G}_n(Y_i|X_i)$ is Beran's estimator of the survival conditional function of the r.v. $C$ given $X$, for more details see \cite{Beran1981}. The property of this estimator has been studied by \cite{Dabrowska1987} and \cite{Dabrowska1989}. Replacing (\ref{conditionel}) in (\ref{mu_hat}) and (\ref{w_hat}) we obtain a feasible estimator of the LLRER function $\mu(\cdot)$.
\end{remark}
\begin{remark}\label{remark2.5}
	A frequently used bandwidth selection technique is the cross-validation method, which choose $h$ to minimize 
	\begin{equation}\label{CV}
	\sum_{i=1}^n \left(\widehat{T}^\star_i-\widehat{\mu}_{-i}(X_i)\right)^2
	\end{equation}
	where $\widehat{\mu}_{-i}(\cdot)$ is the LLRER estimator defined in (\ref{mu_hat}) without using the $i^{th}$ observation $(X_i,T_i)$.
\end{remark}
\section{Hypotheses and main results}\label{sect 3}
We will use the following notation ${\mathcal{C}}$ to refer to a compact set of ${\mathcal{C}}_{0}$ where ${\mathcal{C}}_{0}=\left\{x \in \mathbb{R}^+ / f(x) >0 \right\}$ is an open set. Furthermore, when no confusion is possible, we will denote by $C$ any generic positive constant and we assume that 
\begin{equation}\label{bounded}
\forall T>0, \exists\; C, \;\;\text{such that} \;\; |T|^{-\ell} \leq C.
\end{equation}
\begin{itemize}
	\item[\bf H1] The bandwidth $h$ satisfies $ \displaystyle\lim_{n \rightarrow \infty} h=0, \quad \lim_{n \rightarrow \infty} n h=+\infty,\quad \lim_{n\rightarrow\infty} \frac{\log n}{nh}=0$.\label{H1}
	\item[\bf H2] The kernel $K(\cdot)$ is bounded, symmetric non-negative function on $\mathcal{C}$.\label{H2}
	\begin{itemize}
		\item[\bf i.] $\int t^j K(t)dt<\infty$, for $j=2,3$.\label{H2i}
		\item[\bf ii.] $\int t^j K^2(t)dt<\infty$ for $j=2,3$.\label{H2ii}
	\end{itemize}
	\item[\bf H3] The density function $f(\cdot)$ is continuously differentiable and  $\displaystyle \sup_{x \in \mathcal{C}} |f^{\prime}(x)| <+\infty$. \label{H3} 
	\item[\bf H4] The function $r_\varrho(x)=\int t^{-\varrho}f_{T,X}(t,x)dt$ for $\varrho=1,2,3,4$ is continuously, differentiable and $\displaystyle \sup_{x \in \mathcal{C}} |r_\varrho^{\prime}(x)| <+\infty$. \label{H4}
	\item[\bf H5] The function \[\upsilon_{\ell,k}(x)=\int t^{-\ell k}f_{T,X}(t,x)dt,\ell=1,2\; \text{and}\; 0\leq k\leq \nu\] 
	is continuously differentiable and $\displaystyle \sup_{x \in \mathcal{C}} |\upsilon_{\ell,k}^{\prime}(x)| <+\infty$. \label{H5}
\end{itemize} 
\subsection{Comments on the Hypotheses:}
The hypothesis \hyperref[H1]{ H1} concern the bandwidth and is very common in nonparametric estimation. The hypothesis \hyperref[H2]{H2} regards the Kernel $K$ and are needed for the convergence of the estimator. Analogous hypotheses on the kernel has been also made by \cite{Fan1992}. The hypothesis \hyperref[H3]{H3} deals with the density function $f(\cdot)$. The hypothesis \hyperref[H4]{ H4} and \hyperref[H5]{ H5} are regularity conditions for $r_{\varrho}(\cdot)$ and $\upsilon_{\ell,k}(\cdot)$ respectively for different value of $\ell,\varrho$ and $k$. 
\begin{theorem}\label{theo1}
	Under Hypotheses \hyperref[H1]{H1-H5}, for $n$ large enough, we have
	\begin{equation*}
	\sup_{x\in {\cal C}} |\widehat{\mu}(x)-\mu(x)|=\ \text{O}\Big(h^3\Big) + \text{O}_{a.s.}\left(\sqrt{\frac{\log n}{nh}}\right).
	\end{equation*}
\end{theorem}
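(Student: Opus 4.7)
The plan is to exploit the ratio structure of $\widehat\mu(x)=\widehat\mu_1(x)/\widehat\mu_2(x)$ via the identity
\begin{equation*}
\widehat\mu(x)-\mu(x)=\frac{1}{\widehat\mu_2(x)}\Bigl[\bigl(\widehat\mu_1(x)-\mu_1(x)\bigr)-\mu(x)\bigl(\widehat\mu_2(x)-\mu_2(x)\bigr)\Bigr],
\end{equation*}
so that the theorem reduces to proving two facts: (i) a uniform lower bound $\inf_{x\in\mathcal{C}}|\widehat\mu_2(x)|\geq c>0$ almost surely for $n$ large, and (ii) the uniform rate
\begin{equation*}
\sup_{x\in\mathcal{C}}|\widehat\mu_\ell(x)-\mu_\ell(x)|=O(h^3)+O_{a.s.}\Bigl(\sqrt{\log n/(nh)}\,\Bigr),\qquad \ell=1,2.
\end{equation*}
Item (i) follows from (ii) together with H3 ($f>0$ on $\mathcal{C}$), H2 (positivity of $K$), and the boundedness assumption \eqref{bounded}, since $\mu_2(x)=r_2(x)/f(x)$ is bounded away from zero on $\mathcal{C}$ by continuity of the involved functions.

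\textbf{Reduction to the known censoring distribution.} Let $\widetilde\mu_\ell$ denote the ``oracle'' version \eqref{mu_tilde} built with the true survival $\overline G$ in place of the Kaplan--Meier estimator $\overline G_n$. Writing
\begin{equation*}
\frac{1}{\overline G_n(Y_i)}-\frac{1}{\overline G(Y_i)}=\frac{\overline G(Y_i)-\overline G_n(Y_i)}{\overline G_n(Y_i)\overline G(Y_i)},
\end{equation*}
the classical strong uniform rate $\sup_{t\leq\tau_F}|\overline G_n(t)-\overline G(t)|=O_{a.s.}(\sqrt{\log\log n/n})$, combined with the identifiability hypothesis $\overline G(\tau_F)>0$, yields $\sup_{x\in\mathcal{C}}|\widehat\mu_\ell(x)-\widetilde\mu_\ell(x)|=O_{a.s.}(\sqrt{\log\log n/n})$. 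This is negligible with respect to the announced rate, and it suffices to establish (ii) for the oracle $\widetilde\mu_\ell$.

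\textbf{Bias--variance decomposition and uniform control.} Because $w_{i,j}(x)$ separates in the indices $i$ and $j$, both numerator and denominator of $\widetilde\mu_\ell(x)$ can be written as bilinear combinations of the single-index averages
\begin{equation*}
S_k^{(\ell)}(x)=\frac{1}{n}\sum_{i=1}^n (X_i-x)^k K_h(X_i-x)\,\widetilde T_i^{\star,-\ell},\qquad k=0,1,2,\ \ell=1,2,3,4.
\end{equation*}
For each $S_k^{(\ell)}$ I would split $S_k^{(\ell)}(x)-\mu_\ell(x)\text{-target}=\bigl(S_k^{(\ell)}(x)-\E S_k^{(\ell)}(x)\bigr)+\bigl(\E S_k^{(\ell)}(x)-\text{target}\bigr)$. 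The stochastic part is controlled uniformly on $\mathcal{C}$ by a standard covering argument: cover $\mathcal{C}$ by $O(n^a)$ balls, use the Lipschitz property of $K_h$ to pass from the grid to the continuum, and apply Bernstein's exponential inequality at each grid point (the summands are bounded by \eqref{bounded} and H2, and their variance is $O((nh)^{-1})$ by H4--H5). A Borel--Cantelli argument with an appropriate threshold gives the $O_{a.s.}(\sqrt{\log n/(nh)})$ rate. The bias part is handled via Taylor expansions of $r_\ell$, $f$ and $\upsilon_{\ell,k}$ at $x$ (H3--H5); the symmetry of $K$ (H2) annihilates the odd-order integrals $\int u^jK(u)du$ and, crucially, the algebraic cancellation built into the weights $w_{i,j}$ makes the $O(h)$ and $O(h^2)$ contributions to both numerator and denominator of $\widetilde\mu_\ell-\mu_\ell$ vanish simultaneously, leaving a remainder of order $h^3$.

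\textbf{Main obstacle.} The most delicate point is the bias analysis of the ratio: one must expand four products of single-index averages up to order $h^3$ and verify that the $O(h)$ and $O(h^2)$ contributions cancel exactly in the combination $\widetilde\mu_1/\widetilde\mu_2-\mu_1/\mu_2$ — this is where the choice of the weights \eqref{w_tilde} (as opposed to the classical Nadaraya--Watson ones) plays the decisive role and where the regularity hypotheses H3--H5 on $f$, $r_\varrho$ and $\upsilon_{\ell,k}$ with $\varrho,k$ going up to the right order are needed. The stochastic part is comparatively routine but requires a careful choice of the covering radius ($h^2/n^2$ is safe) so that the Lipschitz discretization error is absorbed into the target rate, and an accurate variance bound so that Bernstein produces the optimal $\sqrt{\log n/(nh)}$ order uniformly in $x\in\mathcal{C}$. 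Assembling all pieces through the ratio identity then yields the announced bound.
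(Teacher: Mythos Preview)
Your overall scheme coincides with the paper's: the same ratio identity, the same three-step decomposition (oracle reduction via the Kaplan--Meier rate, uniform stochastic control by covering plus an exponential inequality, bias control by Taylor expansion), and the same single-index averages $S_k^{(\ell)}$ (the paper's $\widetilde S_{\ell,\gamma}$). The paper packages these as Propositions~1--3 and Lemmas~5.1--5.5; the exponential inequality it invokes (Corollary~A.8 of Ferraty--Vieu) is essentially your Bernstein step.

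There is, however, one concrete misidentification that would make your argument stumble as written. The numerator $\widehat\mu_1(x)$ and denominator $\widehat\mu_2(x)$ in \eqref{mu_hat} do \emph{not} target $\mu_\ell(x)=r_\ell(x)/f(x)$: because the weights $w_{i,j}$ carry the extra factors $T_i^{\star,-2}T_j^{\star,-2}$, one has $\widetilde\mu_\ell(x)=\widetilde S_{2,2}\widetilde S_{\ell,0}-\widetilde S_{2,1}\widetilde S_{\ell,1}$, whose centering is $r_\ell(x)\,r_2(x)$, not $r_\ell(x)/f(x)$ (the density $f$ never appears alone --- it is absorbed into the $r_\varrho$'s). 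Your ratio identity survives, since $r_1r_2/r_2^2=\mu_1/\mu_2=\mu$, but claim~(ii) and the lower-bound argument for $\widehat\mu_2$ must be rewritten with the targets $r_\ell r_2$ and $r_2^2$; this is exactly the decomposition the paper writes just after the statement of Theorem~\ref{theo1}. If you proceed with $\mu_\ell=r_\ell/f$ as target, the bias computation will simply fail to close.

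Relatedly, your ``main obstacle'' paragraph anticipates that the $O(h)$ and $O(h^2)$ cancellations only occur at the level of the ratio $\widetilde\mu_1/\widetilde\mu_2$. In fact the paper's Proposition~3 shows $\sup_x\big|\E[\widetilde\mu_\ell(x)]-r_\ell(x)r_2(x)\big|=O(h^3)$ for each $\ell=1,2$ \emph{separately}: the structure $(X_i-x)\big[(X_i-x)-(X_j-x)\big]$ of the weights already produces the cancellation in numerator and denominator individually, so no delicate ratio expansion is needed for the bias. This simplifies what you flagged as the hardest step.
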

\noindent  The proof of the \hyperref[theo1]{Theorem 1} is made up on the following decomposition:
\begin{multline*}
\widehat{\mu}(x)-\mu(x) =\frac{1}{\widehat{\mu}_{2}(x)} \left\{ \widehat{\mu}_{1}(x)-\widetilde{\mu}_{1}(x)+\widetilde{\mu}_{1}(x)-\E[\widetilde{\mu}_{1}(x)]+\E[\widetilde{\mu}_{1}(x)]-r_1(x)r_2(x) \right.\\ 
+ \left. \mu(x) \left\{ r^2_2(x)-\E[\widetilde{\mu}_{2}(x)]+\E[\widetilde{\mu}_{2}(x)]-\widetilde{\mu}_{2}(x)+\widetilde{\mu}_{2}(x)-\widehat{\mu}_2(x)\right\}\right\}.
\end{multline*}
Remark that  by   Hypothesis \hyperref[H4]{ H4}  and condition (\ref{bounded}), there exists $\eta>0$ such that $\displaystyle \sup_{x\in {\cal C}}| r_2(x)|\leq \eta$. Then,
by triangle inequality, we have
\begin{equation*}
\begin{aligned}
\sup_{x \in \mathcal{C}} \big| \widehat{\mu}(x)-\mu(x) \big|
&\leq \frac{1}{\eta^2 - \displaystyle \sup_{x \in \mathcal{C}} \big|\widehat{\mu}_{2}(x)-r_2^2(x)\big|}
\left\{ \sup_{x \in \mathcal{C}}\big|\widehat{\mu}_{1}(x)-\widetilde{\mu}_{1}(x) \big|+\sup_{x \in \mathcal{C}} \big|\widetilde{\mu}_{1}(x)-\E[\widetilde{\mu}_{1}(x)] \big| \right. \\
& + \sup_{x \in \mathcal{C}} \big|\E[\widetilde{\mu}_{1}(x)]-r_1(x)r_2(x)\big|+ \sup_{x \in \mathcal{C}} \big|\mu(x)\big| \left\{ \sup_{x \in \mathcal{C}} \big| \E[\widetilde{\mu}_2(x)]- r^2_2(x) \big|\right.  \\ 
& + \left. \left. \sup_{x \in \mathcal{C}} \big|\widetilde{\mu}_2(x)-\E[\widetilde{\mu}_2(x)] \big|+\sup_{x \in \mathcal{C}}\big|\widehat{\mu}_2(x)-\widetilde{\mu}_2(x)\big| \right\} \right\}.
\end{aligned}
\end{equation*}
The proof will be achieved with the following propositions. 
\begin{prop}\label{prop1}
	Under Hypotheses \hyperref[H1]{H1}, \hyperref[H2i]{H2 i)}, \hyperref[H3]{H3} and \hyperref[H4]{H4}, for $\ell=1,2$,  for $n$ large enough, we have 
	\begin{equation*}
	\displaystyle \sup_{x \in \mathcal{C}}\left|\widehat{\mu}_{\ell}(x)-\widetilde{\mu}_{\ell}(x)\right|=\text{O}_{a.s.}\left(\sqrt{\frac{\log \log n}{n}}\right).
	\end{equation*}
\end{prop}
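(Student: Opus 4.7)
The plan is to isolate the effect of replacing $\overline{G}$ by its Kaplan--Meier estimator $\overline{G}_n$ in the synthetic data $\widehat{T}^{*,-\ell}$. The identity
\[
\frac{1}{\overline{G}_n(Y_i)} - \frac{1}{\overline{G}(Y_i)} = \frac{\overline{G}(Y_i) - \overline{G}_n(Y_i)}{\overline{G}(Y_i)\,\overline{G}_n(Y_i)},
\]
together with the standing assumption $\overline{G}(\tau_F) > 0$ (so that $\overline{G}(Y_i) \geq \overline{G}(\tau_F) > 0$ on $\{\delta_i = 1\}$, where then $Y_i = T_i \leq \tau_F$) and the a.s.\ uniform consistency of $\overline{G}_n$ on $[0,\tau_F]$, yields, for $n$ large, the deterministic bound
\[
\bigl|\widehat{T}^{*,-s}_i - T^{*,-s}_i\bigr| \leq C\,\sup_{t\leq \tau_F}\bigl|\overline{G}_n(t) - \overline{G}(t)\bigr|
\]
for each relevant exponent $s$, condition (\ref{bounded}) being used to absorb the bounded factor $Y_i^{-s}$. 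The F\"oldes--Rejt\H{o} law of iterated logarithm for the Kaplan--Meier estimator then supplies
\[
\sup_{t\leq \tau_F}\bigl|\overline{G}_n(t) - \overline{G}(t)\bigr| = \text{O}_{a.s.}\!\left(\sqrt{\frac{\log\log n}{n}}\right),
\]
which is precisely the rate appearing in the proposition.

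I would then expand $\widehat{\mu}_\ell(x) - \widetilde{\mu}_\ell(x)$ by a telescoping argument over the (at most three) positions at which the hatted synthetic variables enter, using decompositions of the type
\[
\widehat{T}^{*,-2}_i\widehat{T}^{*,-2}_j - T^{*,-2}_iT^{*,-2}_j = \bigl(\widehat{T}^{*,-2}_i - T^{*,-2}_i\bigr)\widehat{T}^{*,-2}_j + T^{*,-2}_i\bigl(\widehat{T}^{*,-2}_j - T^{*,-2}_j\bigr),
\]
together with the analogous peeling of the extra factor $\widehat{T}^\star_j - T^\star_j$ when $\ell = 1$. After substituting the first step, each resulting term factors as the uniform quantity $\sup_{t\leq\tau_F}|\overline{G}_n(t)-\overline{G}(t)| = \text{O}_{a.s.}(\sqrt{\log\log n/n})$ multiplied by a double sum of the form
\[
S_n(x) := \sum_{i,j=1}^{n}\bigl|(X_i-x)\bigl((X_i-x)-(X_j-x)\bigr)\bigr|\,K_h(X_i-x)\,K_h(X_j-x),
\]
the surviving $T^{*,-2}$ and $\widehat{T}^{*,-2}$ factors being absorbed into the constant $C$ via (\ref{bounded}).

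The final step is to show that $\sup_{x\in\mathcal{C}}S_n(x)$ is a.s.\ of the same order as the normalization carried by $\widehat{\mu}_\ell$ and $\widetilde{\mu}_\ell$ (the one that makes $\E[\widetilde{\mu}_\ell(x)]$ tend to $r_\ell(x) r_2(x)$ in the decomposition of Theorem \ref{theo1}), so that these factors cancel when forming the difference $\widehat{\mu}_\ell - \widetilde{\mu}_\ell$. A standard change of variable under \hyperref[H2]{H2 i)} and \hyperref[H3]{H3} gives $\E[S_n(x)] = \text{O}(n^2 h^4)$ uniformly on $\mathcal{C}$, and the almost sure uniform strengthening follows by covering $\mathcal{C}$ with an $\text{O}(h^{-1})$-net, exploiting the boundedness and uniform continuity of $K$, and applying the SLLN (or a Bernstein-type deviation bound under \hyperref[H1]{H1}). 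Combining the three steps then gives
\[
\sup_{x\in\mathcal{C}}\bigl|\widehat{\mu}_\ell(x) - \widetilde{\mu}_\ell(x)\bigr| = \text{O}_{a.s.}\!\left(\sqrt{\frac{\log\log n}{n}}\right).
\]

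The main obstacle is the passage from the pointwise SLLN for $S_n(x)$ to its almost sure uniform version on $\mathcal{C}$ without deteriorating the $\sqrt{\log\log n/n}$ rate; this requires a careful covering plus equicontinuity/Lipschitz argument on $K$ rather than a naive union bound. Once this uniform kernel-sum control is in place, the Kaplan--Meier LIL and the algebraic telescoping are essentially black-box inputs, and the bookkeeping across the two cases $\ell=1,2$ is mechanical.
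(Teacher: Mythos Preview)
Your approach is correct in spirit and shares the key ingredient with the paper: both arguments isolate the Kaplan--Meier fluctuation via
\[
\frac{1}{\overline{G}_n(Y_i)}-\frac{1}{\overline{G}(Y_i)}=\frac{\overline{G}(Y_i)-\overline{G}_n(Y_i)}{\overline{G}(Y_i)\overline{G}_n(Y_i)}
\]
and invoke the LIL rate $\sup_{t\le\tau_F}|\overline{G}_n(t)-\overline{G}(t)|=\text{O}_{a.s.}(\sqrt{\log\log n/n})$ (the paper cites Deheuvels--Einmahl for this). The difference is structural. The paper first observes that the double sum defining $\widetilde{\mu}_\ell$ \emph{factors exactly} into single-index kernel averages,
\[
\widetilde{\mu}_\ell(x)=\widetilde{S}_{2,2}(x)\widetilde{S}_{\ell,0}(x)-\widetilde{S}_{2,1}(x)\widetilde{S}_{\ell,1}(x),
\qquad
\widetilde{S}_{\ell,\gamma}(x)=\frac{1}{nh}\sum_{i=1}^n T_i^{\star,-\ell}(X_i-x)^\gamma K_h(X_i-x),
\]
and then telescopes at the level of these $S$-blocks. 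The ``remaining factors'' you worry about are then just $\widetilde{S}_{\ell,\gamma}$, $\E[\widetilde{S}_{\ell,\gamma}]$ and $\widehat{S}_{\ell,\gamma}-\widetilde{S}_{\ell,\gamma}$, each controlled by a short separate lemma (Lemmas~5.1--5.3); in particular the uniform-in-$x$ control is done once, for single-index sums, via a standard covering plus Bernstein argument, and Lemmas~5.2--5.3 are reused verbatim in the proof of Proposition~2.

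You instead telescope directly on the synthetic-data factors inside the unfactored double sum, which forces you to control the $U$-statistic--like object $S_n(x)$ uniformly in $x$. This works, but notice that as soon as you bound $|(X_i-x)-(X_j-x)|\le |X_i-x|+|X_j-x|$, your $S_n(x)$ splits into products of exactly the single-index sums the paper uses, and your ``main obstacle'' dissolves into the paper's Lemmas~5.2--5.3. So your route is not wrong, just less economical: the paper's upfront factorization avoids the double-sum detour and recycles the building blocks needed elsewhere. One small point: absorbing the surviving $\widehat{T}^{*,-2}_j$ factors into a constant requires not only (\ref{bounded}) but also $\inf_{t\le\tau_F}\overline{G}_n(t)\ge \overline{G}(\tau_F)/2>0$ a.s.\ for large $n$, which you implicitly use but should state.
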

\begin{prop}\label{prop2}
	Under Hypotheses \hyperref[H1]{H1}, \hyperref[H2i]{H2 i)}, \hyperref[H3]{H3}, \hyperref[H4]{H4} and \hyperref[H5]{H5}, for $\ell=1,2$, for $n$ large enough, we have
	\begin{equation*}
	\sup_{x \in \mathcal{C}}\big|\widetilde{\mu}_{\ell}(x)-\E[\widetilde{\mu}_{\ell}(x)]\big|=\text{O}_{a.s.} \left(\sqrt{\frac{\log n }{n h }}\right).
	\end{equation*}
\end{prop}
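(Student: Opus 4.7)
The proof plan is to exploit the algebraic factorisation of the double sums defining $\widetilde{\mu}_{\ell}(x)$ and then reduce the problem to uniform Bernstein-type deviations of scalar kernel sums. Expanding $(X_{i}-x)\bigl((X_{i}-x)-(X_{j}-x)\bigr)=(X_{i}-x)^{2}-(X_{i}-x)(X_{j}-x)$ inside $w_{i,j}(x)$ and using the identity $\sum_{i,j}a_{i}b_{j}=\bigl(\sum_{i}a_{i}\bigr)\bigl(\sum_{j}b_{j}\bigr)$, I would write
\begin{equation*}
\widetilde{\mu}_{\ell}(x) \;=\; A_{n}(x)\,B_{n}^{(\ell)}(x) \;-\; C_{n}(x)\,D_{n}^{(\ell)}(x),
\end{equation*}
where each of $A_{n},B_{n}^{(\ell)},C_{n},D_{n}^{(\ell)}$ is a suitably normalised sum $S_{n}(x)=\frac{1}{n}\sum_{i=1}^{n}\varphi(X_{i},Y_{i},\delta_{i};x)$ of i.i.d.\ terms, $\varphi$ being the product of a power of $X_{i}-x$, $K_{h}(X_{i}-x)$ and the bounded synthetic factors $T_{i}^{\star,-2}$ (supplemented by an extra $T_{i}^{\star,-1}$ when $\ell=1$). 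The elementary identity $UV-\E[UV]=(U-\E[U])\E[V]+\E[U](V-\E[V])+(U-\E[U])(V-\E[V])$, applied twice, then reduces the proposition to establishing
\begin{equation*}
\sup_{x\in\mathcal{C}} \bigl|S_{n}(x)-\E[S_{n}(x)]\bigr| \;=\; \text{O}_{a.s.}\!\left(\sqrt{\tfrac{\log n}{nh}}\right)
\end{equation*}
for each of the four scalar sums, the cross-product of two deviations being of order $\log n/(nh)$ and hence negligible.

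For each scalar $S_{n}(x)$, condition (\ref{bounded}) together with $\overline{G}(\tau_{F})>0$ forces $|T_{i}^{\star,-\ell}|\le C$, so that $\varphi$ is uniformly bounded on the support of $K_{h}$; Hypotheses \hyperref[H3]{H3}--\hyperref[H5]{H5} then yield, after the change of variable $u=(t-x)/h$, a variance of the summand controlled uniformly in $x\in\mathcal{C}$. Bernstein's inequality therefore provides, for any $\eta>0$ and any fixed $x\in\mathcal{C}$,
\begin{equation*}
\mathbb{P}\!\left(\bigl|S_{n}(x)-\E[S_{n}(x)]\bigr|>\eta\sqrt{\tfrac{\log n}{nh}}\right) \;\le\; 2\,n^{-c\eta^{2}},
\end{equation*}
with $c>0$ independent of $n$ and $x$. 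To upgrade this pointwise statement to a uniform one, I would cover the compact $\mathcal{C}$ by $L_{n}\asymp n^{\gamma}$ balls of centres $x_{1},\dots,x_{L_{n}}$ and radius $\rho_{n}=n^{-\gamma}$; a union bound combined with the Borel--Cantelli lemma, for $\gamma$ and $\eta$ chosen so that $L_{n}\,n^{-c\eta^{2}}$ is summable, takes care of the grid, while the oscillation of $S_{n}$ and of the deterministic map $x\mapsto \E[S_{n}(x)]$ across each ball is bounded by $C\rho_{n}/h^{2}$ (via the Lipschitz regularity of the weight in $x$ and Hypotheses \hyperref[H3]{H3}--\hyperref[H5]{H5}), which is $o\!\bigl(\sqrt{\log n/(nh)}\bigr)$ for $\gamma$ large enough.

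The most delicate step, and the precise reason Hypothesis \hyperref[H5]{H5} is introduced, is the variance computation for each $S_{n}(x)$. Conditioning on $T_{i}$ transforms the weight $T_{i}^{\star,-2}\bigl(T_{i}^{\star,-1}\bigr)^{\alpha}$ into $T_{i}^{-(2+\alpha)}/\overline{G}(T_{i})^{1+\alpha}$, so that after integrating out $X_{i}$ the second moment of the summand reduces to a kernel integral against the functions $\upsilon_{\ell,k}$ and $r_{\varrho}$; their regularity guaranteed by \hyperref[H4]{H4}--\hyperref[H5]{H5}, together with the bound $1/\overline{G}(\cdot)^{1+\alpha}\le 1/\overline{G}(\tau_{F})^{1+\alpha}$ and the kernel integrability in \hyperref[H2i]{H2 i)}, yields the correct power of $h$ in the variance. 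Once this variance bookkeeping has been done for each of the four scalar sums, the Bernstein tail, the covering and the product decomposition combine routinely into the announced uniform almost sure rate.
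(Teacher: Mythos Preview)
Your overall architecture coincides with the paper's: factorise $\widetilde{\mu}_{\ell}(x)$ as $\widetilde{S}_{2,2}(x)\widetilde{S}_{\ell,0}(x)-\widetilde{S}_{2,1}(x)\widetilde{S}_{\ell,1}(x)$, control each scalar sum $\widetilde{S}_{\ell,\gamma}(x)-\E[\widetilde{S}_{\ell,\gamma}(x)]$ uniformly by a Bernstein-type inequality plus covering, and assemble via a product decomposition. However there is a genuine gap in the assembly step.

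The ``elementary identity'' you invoke is wrong as written. A direct expansion gives
\[
(U-\E[U])\E[V]+\E[U](V-\E[V])+(U-\E[U])(V-\E[V]) \;=\; UV-\E[U]\E[V],
\]
which differs from $UV-\E[UV]$ by $\C(U,V)$. Consequently your decomposition of $\widetilde{\mu}_{\ell}-\E[\widetilde{\mu}_{\ell}]$ omits the two deterministic terms $\C\bigl(\widetilde{S}_{2,2},\widetilde{S}_{\ell,0}\bigr)$ and $\C\bigl(\widetilde{S}_{2,1},\widetilde{S}_{\ell,1}\bigr)$. These are not automatically absorbed by the deviation bounds on the individual $S_n$; they must be estimated separately. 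The paper devotes two lemmas (its Lemmas~5.4 and~5.5) to this: expanding each covariance as $n^{-1}h^{-2}$ times $\E\bigl[T_1^{\star,-(\ell+2)}(X_1-x)^2K_h^2(X_1-x)\bigr]+O(h^2)$, one needs the functions $r_{\varrho}$ for $\varrho=3,4$ (this is why Hypothesis~\hyperref[H4]{H4} is stated for $\varrho$ up to $4$, not just $1,2$) and the integrability of $t^{2}K^{2}(t)$ and $t^{3}K^{2}(t)$ from \hyperref[H2ii]{H2~ii)}, neither of which you invoke. The covariances come out $O(h/n)=o\!\left(\sqrt{\log n/(nh)}\right)$, so the gap is repairable, but it is a missing step rather than a typo.

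A smaller omission: the terms $\E[U](V-\E[V])$ and $\E[V](U-\E[U])$ require uniform bounds on $\E[\widetilde{S}_{\ell,\gamma}(x)]$ themselves, not just on the centred deviations. The paper records $\sup_{x\in\mathcal C}|\E[\widetilde{S}_{\ell,\gamma}(x)]|=O(h^{\gamma})$ in its Lemma~5.3; you should state and use the analogous fact explicitly, since otherwise the linear terms in your product decomposition carry an uncontrolled factor.
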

\begin{prop}\label{prop3}
	Under Hypotheses \hyperref[H1]{H1}, \hyperref[H2]{H2} and \hyperref[H4]{H4}, for $\ell=1,2,$ for $n$ large enough, we have
	\begin{equation*}
	\sup_{x \in \mathcal{C}}\left|\E[\widetilde{\mu}_{\ell}(x)]-r_\ell(x)r_2(x)\right|=\text{O}\left(h^3\right).
	\end{equation*}
\end{prop}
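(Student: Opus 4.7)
The plan is to compute $\E[\widetilde{\mu}_\ell(x)]$ directly from the definitions (\ref{mu_tilde})--(\ref{w_tilde}), reduce the double sum to products of one-variable expectations via the i.i.d.\ structure of the data, and then extract the bias by Taylor expansion.

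First, observe that $w_{i,i}(x)=0$ by the very form of (\ref{w_tilde}), so only off-diagonal pairs $i\neq j$ contribute to $\E[\widetilde{\mu}_\ell(x)]$. Expanding $w_{i,j}(x)$ as a combination of two product terms and using independence of the $n$ observations, every surviving expectation splits into a product of two one-variable expectations of the form $\E[(X_1-x)^k K_h(X_1-x)\,Z_1]$, where $Z_1$ is a function of $(Y_1,\delta_1)$. The conditional identity $\E[T^{\star,-\varrho}_1\mid X_1]=\E[T^{-\varrho}_1\mid X_1]$ established in Section~\ref{sect 2} from the censoring assumption (\ref{indep}) then converts each single-index expectation into an integral against the appropriate $r_\varrho$, $\varrho\in\{1,2,3,4\}$.

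Second, after the change of variables $t=(u-x)/h$, every factor takes the shape $h^{k+1}\int t^k K(t)\,r_\varrho(x+ht)\,dt$, with $k\in\{0,1,2\}$. Taylor-expanding $r_\varrho(x+ht)$ around $x$ and invoking the symmetry of $K$ from Hypothesis~\hyperref[H2]{H2}, the odd-moment integrals $\int t^{2m+1}K(t)\,dt$ vanish, leaving only the even-power contributions. Hypotheses \hyperref[H3]{H3}--\hyperref[H5]{H5} provide the uniform sup-norm bounds on $f^{\prime}$, $r_\varrho^{\prime}$, $\upsilon_{\ell,k}^{\prime}$ required to control the Taylor remainders uniformly over $x\in\mathcal{C}$. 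Reassembling the factor expansions, the weights $w_{i,j}(x)$---which were built from the solution of the normal equations of (\ref{argmin})---produce a determinant-like combination $U_0U_2-U_1^2$ for the denominator (and an analogous combination for the numerator) in which both the lowest-order term (killed by kernel symmetry) and the next-order term (killed by the determinant structure) disappear; the remaining error is of order $h^3$, uniformly in $x\in\mathcal{C}$.

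The main obstacle is obtaining the sharp $O(h^3)$ rate rather than the naive $O(h^2)$: this requires keeping the Taylor expansions long enough that, once the symmetry of $K$ eliminates the odd-power terms and the determinant-like local linear structure cancels the leading even-power terms, the first surviving contribution is genuinely of order $h^3$. A secondary subtlety is promoting pointwise control of the Taylor remainders into uniform control on $\mathcal{C}$, which rests entirely on the sup-norm bounds in H3--H5.
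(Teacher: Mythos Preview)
Your approach is essentially the paper's: exploit the i.i.d.\ structure to reduce the double sum to an expectation over a single pair $(X_1,X_2)$, condition on the covariates to bring in $r_2$ and $r_\ell$, change variables, and Taylor-expand. The organisation differs slightly. The paper does not expand each factor separately and then rely on kernel symmetry plus a determinant-type cancellation as you describe; instead it observes that the weight structure $(X_i-x)^2-(X_i-x)(X_j-x)$ already carries a prefactor $h^2$ after the substitution, so that the bias becomes
\[
h^{2}\int\!\!\int (t^{2}-ts)K(t)K(s)\bigl\{r_{2}(x+th)r_{\ell}(x+sh)-r_{2}(x)r_{\ell}(x)\bigr\}\,dt\,ds,
\]
and a single first-order Taylor expansion with Lagrange remainder on the bracketed difference supplies one more factor $h$, giving $O(h^{3})$ directly and uniformly via the sup-bound on $r_\varrho'$. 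Your multi-step cancellation scheme would also reach the goal but involves more bookkeeping than is needed. Note finally that Proposition~\ref{prop3} assumes only H1, H2 and H4; your proposal invokes H3 and H5, which are not required here since the only regularity used is that of $r_\varrho$.
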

\section{Numerical study}\label{sect 4}
To evaluate the quality of this method, we perform several simulations of the proposed estimator $\widehat{\mu}(\cdot)$ with different level of censoring. For that, we generate the data as follows:
\begin{description}
	\item[{\bf Inputs:}] Generate $n$ i.i.d. \{$X_i \leadsto \mathcal{N}(0,1)$, $C_i \leadsto \mathcal{N}(3+c,1)$ and $\epsilon_i \leadsto \mathcal{N}(0,1)$\} for $1 \leq i \leq n$ where $c$ is a constant that adjusts the percentage of censoring (C.P.).
	\begin{description}
		\item[Step 1 :]	 Calculate the interest variable $T_i=2X_i+1+0.2\,\epsilon_i$ where $X_i$ and $\varepsilon_i$ are independent.
		\item[Step 2 :]  Compute the observed data $\{T^{\star}_i,1 \leq i \leq n\}$ from (\ref{sythetic}) with the KM estimator from (\ref{K-M}).
		\item[Step 3 :]  We employ the Gaussian Kernel. Furthermore, we apply the cross-validation method (see : \hyperref[remark2.5]{Remark 2.5}) to choose the bandwidth. For a predetermined sequence of $h$'s from a wide range ($0.01$ to $2$) with an increment $0.01$, we choose the optimal bandwidth ($h_{opt}$) that minimize the cross-validation criterium (\ref{CV}).
	\end{description}
	\item[{\bf Ouputs:}] Compute the LLRER estimator from (\ref{mu_hat}) for $x \in [1,4]$ and $h_{opt}$.
\end{description}
In all the simulation study, we use the following proposition of \cite{Port1994} which permit to calculate the theoretical RER function (see formula (\ref{TC}) below).
\begin{prop}
	Let $q_1(X)$ and $q_2(X)$ be two random variable with means: $\mu_1$ and $\mu_2$ and variances: $v_1$ and $v_2$ respectively, and covariance $v_{12}$. Let $(X_i)_{1 \leq i \leq n}$ be an i.i.d. sequence of r.v. and defined by 
	\[\displaystyle \widehat{\Sigma}_1= \frac{1}{n} \sum_{i=1}^{n} q_1(X_i) \;\; \text{and} \;\; \widehat{\Sigma}_2= \frac{1}{n} \sum_{i=1}^{n} q_2(X_i)\]
	and $\displaystyle  \widehat{R}=\displaystyle{\frac{\widehat{\Sigma}_1}{\widehat{\Sigma}_2}}$ then the second order approximation of $\displaystyle \E[\widehat{R}]$ is 
	\[\displaystyle  \E[\widehat{R}] \approx \frac{\mu_1}{\mu_2}+ \frac{1}{n} \left( \frac{\mu_1 v_2}{\mu_2^3}-\frac{v_{12}}{\mu_2^2}\right).\]
\end{prop}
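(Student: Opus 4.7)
The plan is to obtain the approximation from a second-order Taylor expansion of $f(x,y)=x/y$ around the deterministic point $(\mu_1,\mu_2)$. Set $D_\ell := \widehat{\Sigma}_\ell - \mu_\ell$ for $\ell=1,2$, so that $\E[D_\ell]=0$ and, by the i.i.d.\ assumption on the $(X_i)$, $\V[D_\ell] = v_\ell / n$ and $\C[D_1,D_2] = v_{12}/n$. First I would write
\[
\widehat{R} \;=\; \frac{\mu_1 + D_1}{\mu_2}\left(1+\frac{D_2}{\mu_2}\right)^{-1},
\]
then apply the geometric expansion $(1+u)^{-1} = 1 - u + u^2 + O(u^3)$ at $u=D_2/\mu_2$ and multiply out, retaining only monomials of total degree at most two in $(D_1,D_2)$:
\[
\widehat{R} \;\approx\; \frac{\mu_1}{\mu_2} + \frac{D_1}{\mu_2} - \frac{\mu_1 D_2}{\mu_2^2} - \frac{D_1 D_2}{\mu_2^2} + \frac{\mu_1 D_2^2}{\mu_2^3}.
\]

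The next step is to take expectations. The two linear terms vanish because $\E[D_1]=\E[D_2]=0$, and substituting $\E[D_1 D_2]=v_{12}/n$ together with $\E[D_2^2]=v_2/n$ into the two remaining quadratic terms produces exactly
\[
\E[\widehat{R}] \;\approx\; \frac{\mu_1}{\mu_2} + \frac{1}{n}\left(\frac{\mu_1 v_2}{\mu_2^3} - \frac{v_{12}}{\mu_2^2}\right),
\]
which is the announced formula.

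The main delicate point is making the symbol $\approx$ precise. The remainder in the Taylor expansion involves cubic and higher combinations of $D_1,D_2$; under finite third or fourth moments of $q_1(X)$ and $q_2(X)$ these contribute at most $O(n^{-3/2})$ to $\E[\widehat{R}]$, hence are of smaller order than the $1/n$ correction retained above. A fully rigorous argument would localize the expansion on the event $\{|D_2|\le \mu_2/2\}$ (on which the geometric series converges) and bound the contribution of its complement via Chebyshev's inequality, using $\mu_2\neq 0$. Since the statement is only a second-order asymptotic approximation, this technical step is usually left implicit, as in \cite{Port1994}.
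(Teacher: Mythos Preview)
Your argument is correct: the second-order Taylor expansion of $x/y$ around $(\mu_1,\mu_2)$, followed by taking expectations and using $\E[D_\ell]=0$, $\E[D_1D_2]=v_{12}/n$, $\E[D_2^2]=v_2/n$, is exactly the standard derivation of this formula.

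Note, however, that the paper does not supply its own proof of this proposition. It is quoted as a result of \cite{Port1994} and used only as a computational tool in the simulation section (to produce the theoretical curve in (\ref{TC})). So there is no ``paper's proof'' to compare against; your Taylor/delta-method derivation is precisely the classical argument behind the cited result, and your remarks on controlling the remainder (localizing on $\{|D_2|\le \mu_2/2\}$ and bounding the complement) are the right way to make the $\approx$ rigorous if one wished to do so.
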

\noindent In the following figures, the solid line represent the theoretical curve (TC) of the RER function which is generated according to the following formula: 
\begin{equation}\label{TC}
m(x)=2x+1+0.04(2x+1)^{-1} \;\;\; \text{for} \;\;\; x \in [1,4]
\end{equation}
Furthermore, a comparative study with other existing kernel methods: the classical regression (CR) estimator defined in \cite{Guessoum2008} by 
\begin{equation*}
\widehat{m}(x)=\frac{\displaystyle \sum_{i=1}^n \widehat{T}^\star_{i} K_h(X_i-x)}{\displaystyle \sum_{i=1}^n K_h(X_i-x)}
\end{equation*}
and the local linear classical regression (LLCR) estimators defined in (\ref{LLCR}) was carried out.
\subsection{Effect of sample size:} We plot the true RER curve (TC) together with the LLRER estimator in \hyperref[figure1]{Figure 1}. We can see that the quality of fit is better when $n$ rises.
\begin{figure}[!h]
	\hspace*{-0.5cm}
	\begin{minipage}[c]{.26\linewidth}
		\includegraphics[height=2in, width=2in]{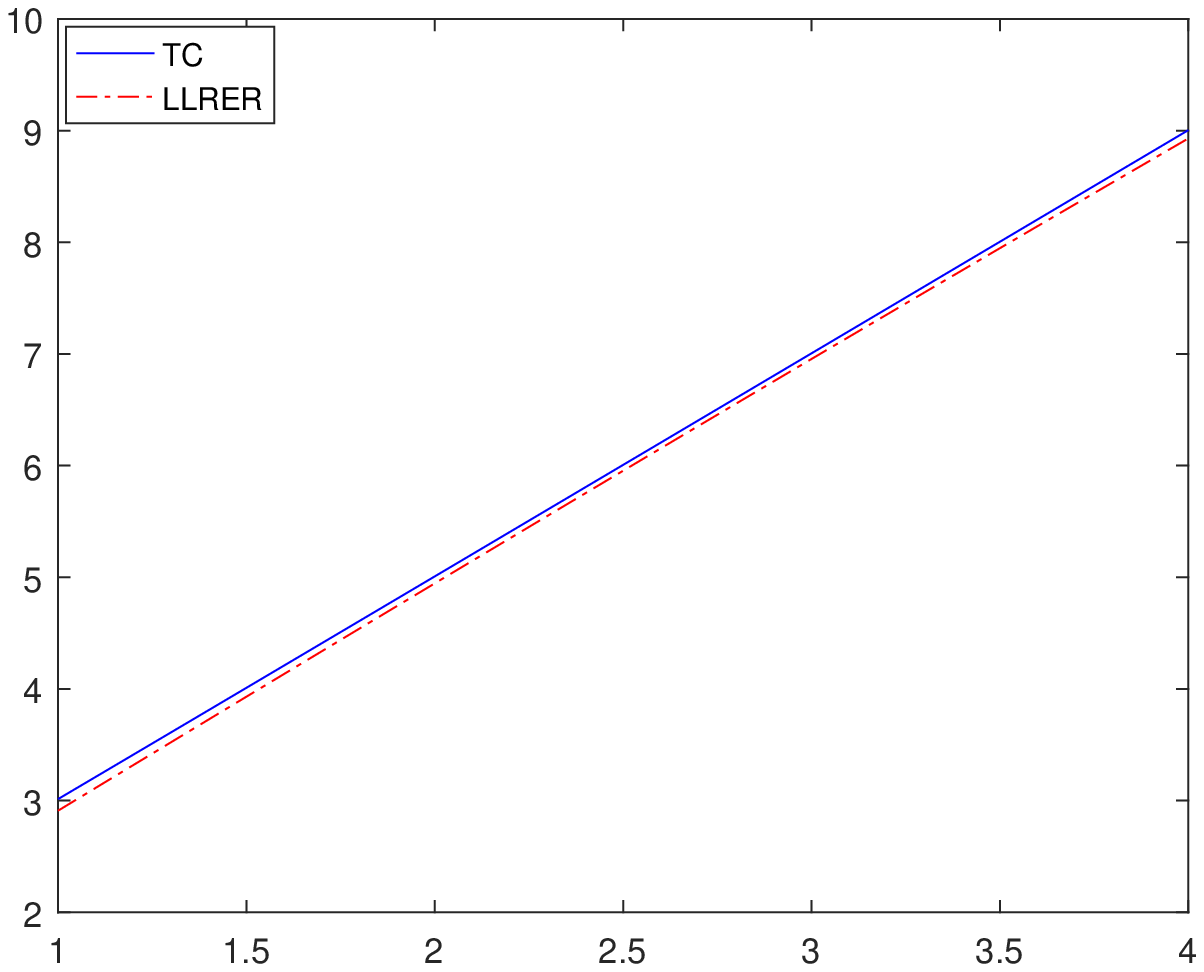}
	\end{minipage} \hfill
	\begin{minipage}[c]{.26\linewidth}
		\includegraphics[height=2in, width=2in]{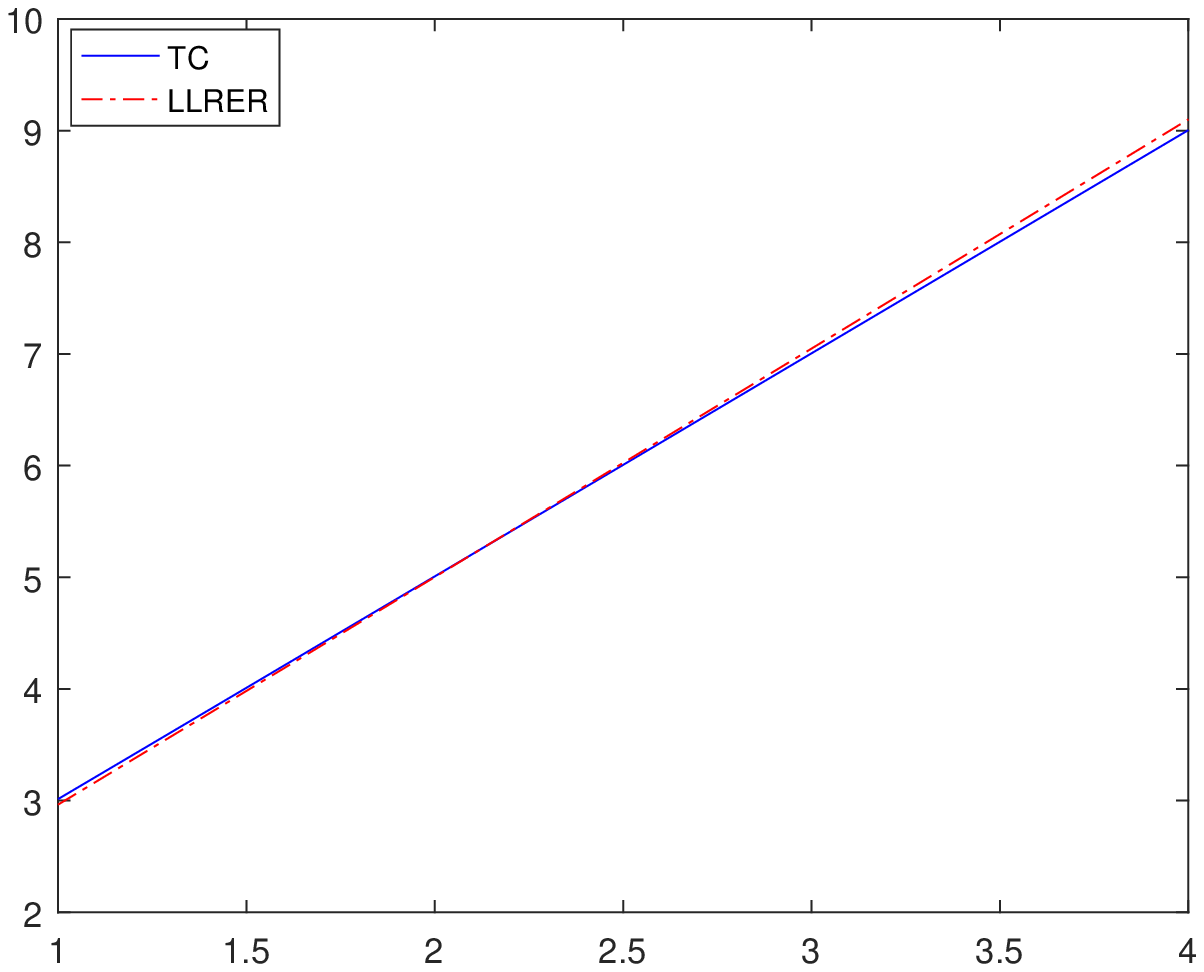}
	\end{minipage} \hfill
	\begin{minipage}[c]{.26\linewidth}
		\includegraphics[height=2in, width=2in]{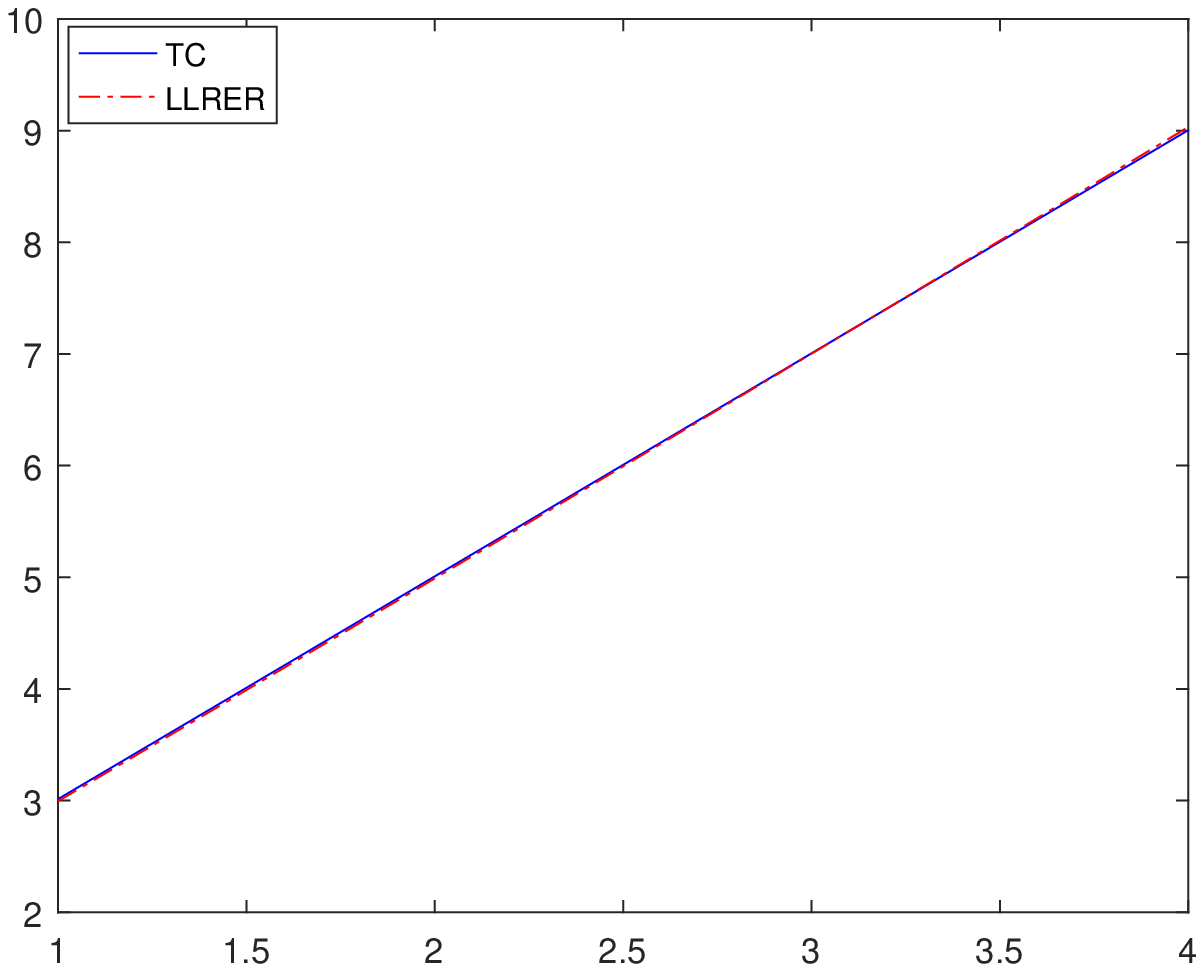}
	\end{minipage}\hfill\hfill
	\caption{\textcolor{blue}{$\mu(\cdot)$}, \textcolor{red}{$\widehat{\mu}(\cdot)$} with C.P.$\approx 65\%$ for $n=100,300,$ and $500$ respectively.}\label{figure1}
\end{figure}
\vspace*{1in}

\subsection{Effect of C.P.:} From \hyperref[figure2]{Figure 2}, it can be seen for a fixed sample size that the LLRER estimator quality is a little bit affected by the percentage of observed data. 
\begin{figure}[!h]
	\hspace*{-0.5cm}
	\begin{minipage}[c]{.26\linewidth}
		\includegraphics[height=2in, width=2in]{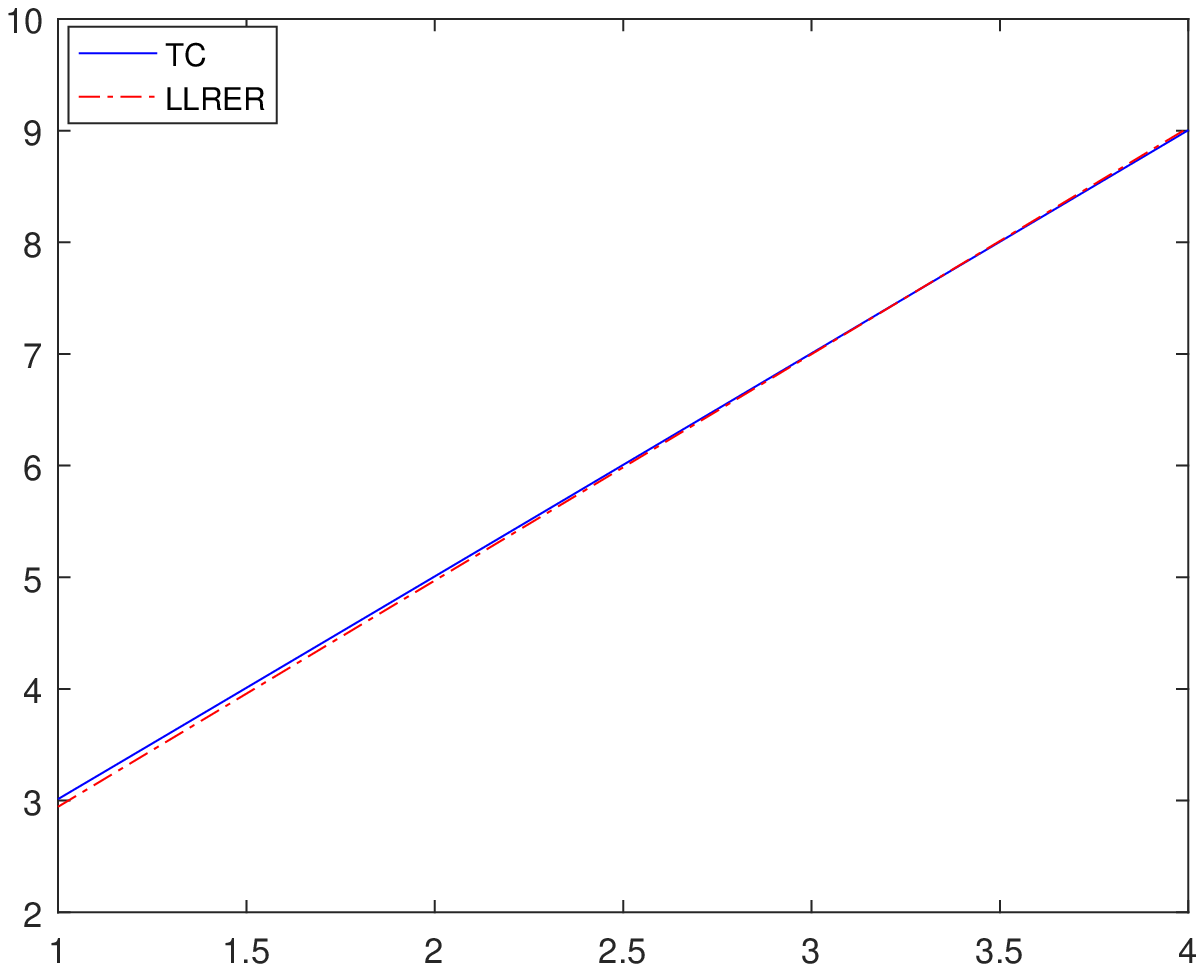}
	\end{minipage} \hfill
	\begin{minipage}[c]{.26\linewidth}
		\includegraphics[height=2in, width=2in]{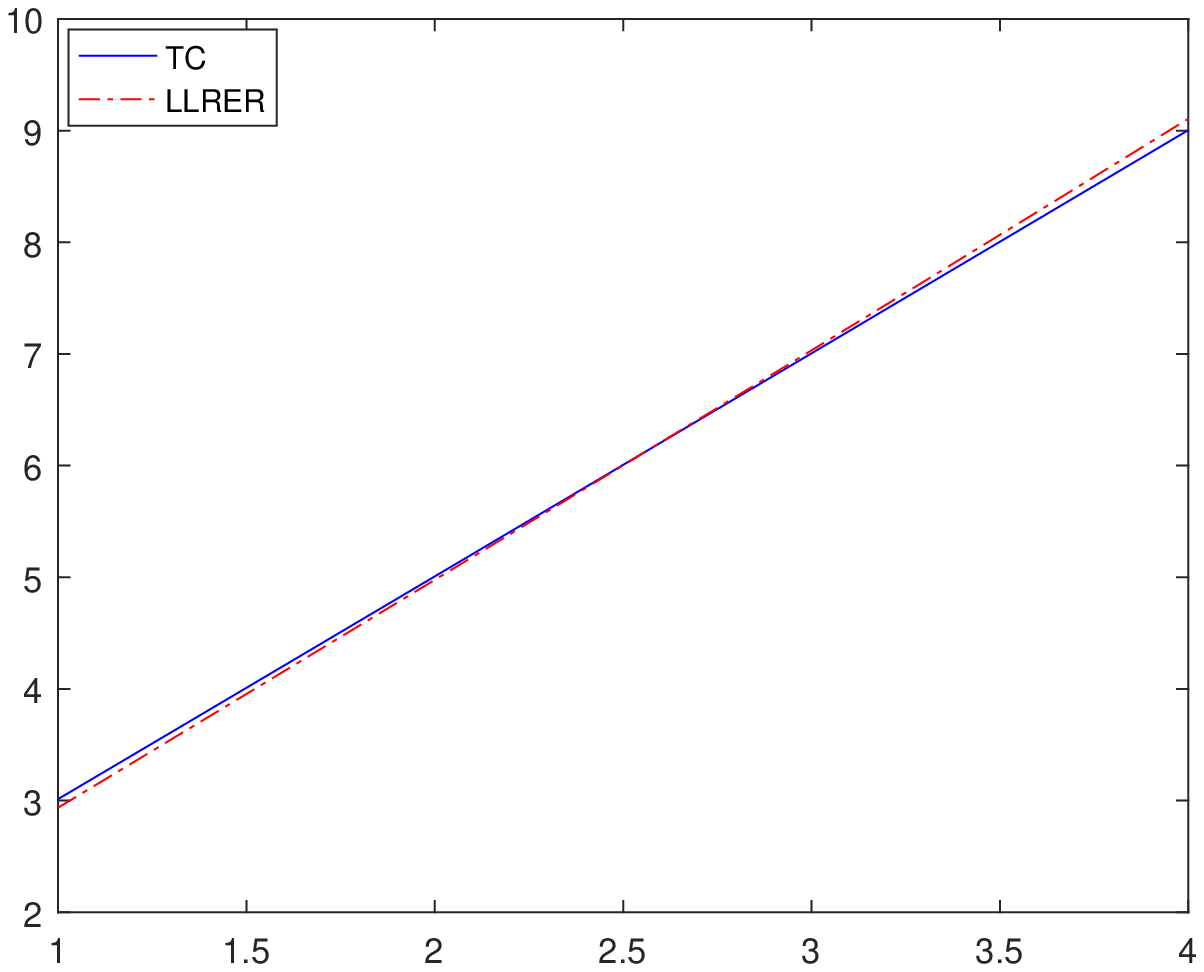}
	\end{minipage} \hfill
	\begin{minipage}[c]{.26\linewidth}
		\includegraphics[height=2in, width=2in]{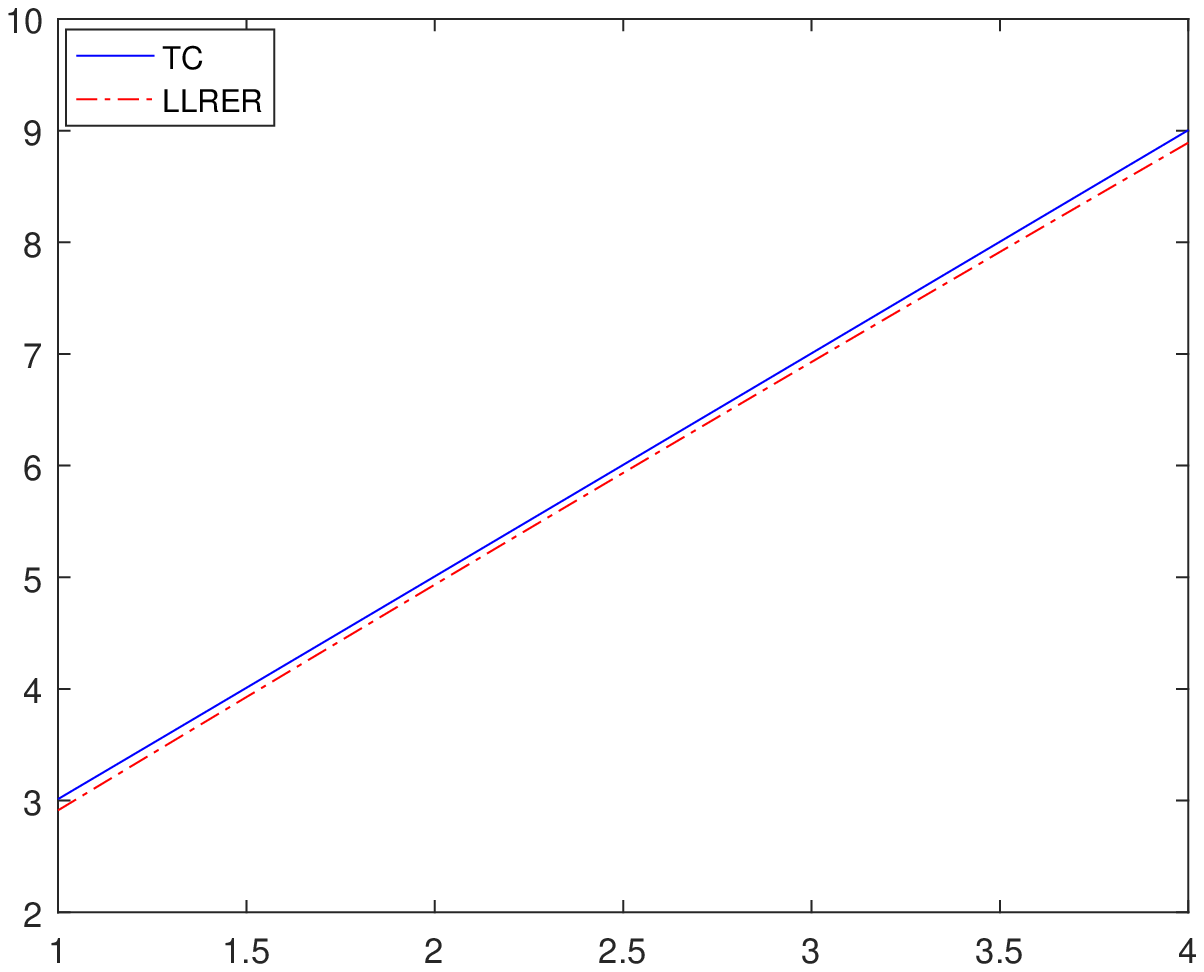}
	\end{minipage}\hfill\hfill
	\caption{\textcolor{blue}{$\mu(\cdot)$}, \textcolor{red}{$\widehat{\mu}(\cdot)$} with $n=300$ for C.P.$\approx 35, 50, \;\text{and}\; 70\%$ respectively.}\label{figure2}
\end{figure}
\subsection{Effect of outliers:} 
In order to assess the robustness to outliers of our new estimator, we generate samples of size $n = 300$ and multiply the values of 15 among them by a multiplying coefficient (M.C.).
We can observe that the quality of fit decreases as the value of M.C. increases but remains consistent.
\begin{figure}[!h]
	\hspace*{-0.5cm}
	\begin{minipage}[c]{.26\linewidth}
		\includegraphics[height=2in, width=2in]{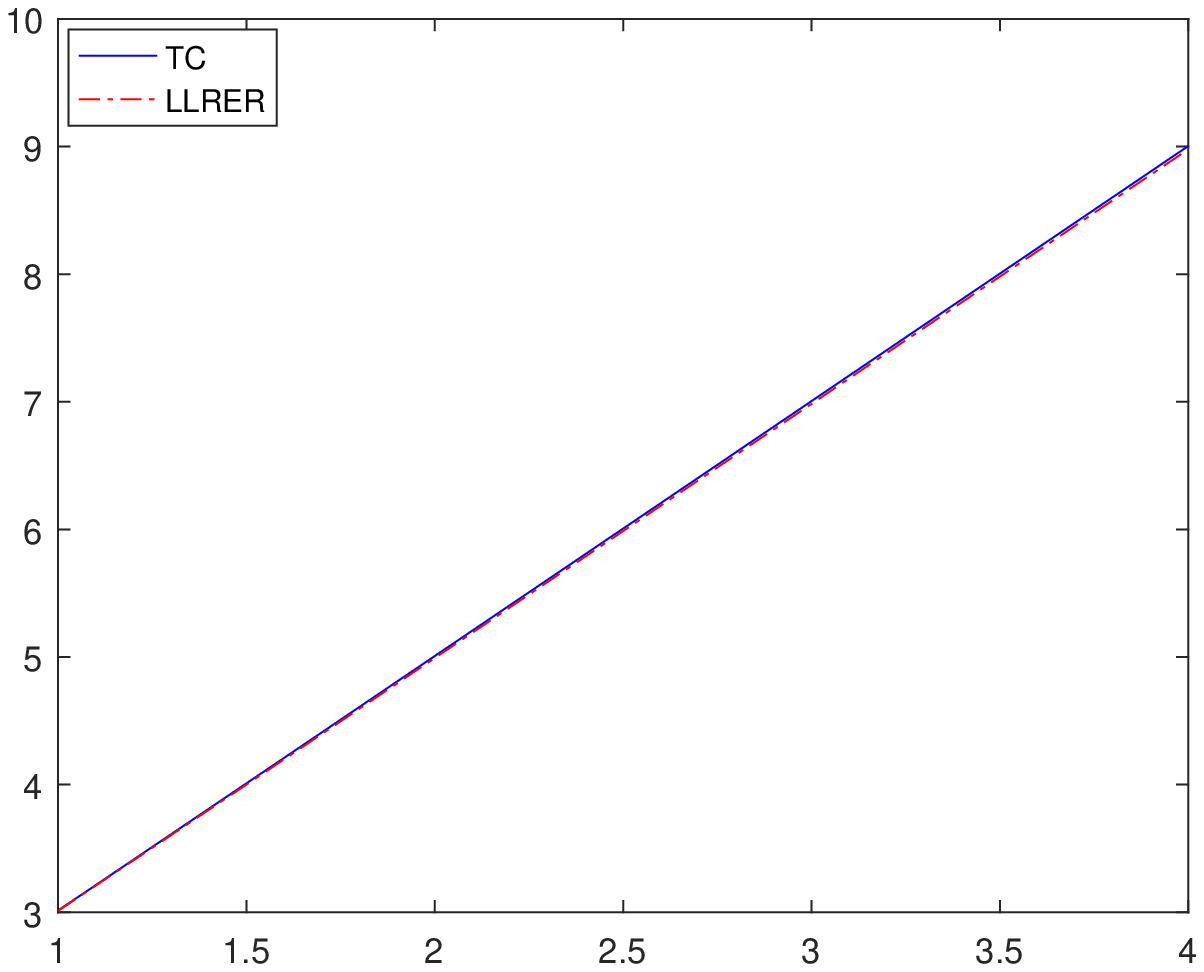}
	\end{minipage} \hfill
	\begin{minipage}[c]{.26\linewidth}
		\includegraphics[height=2in, width=2in]{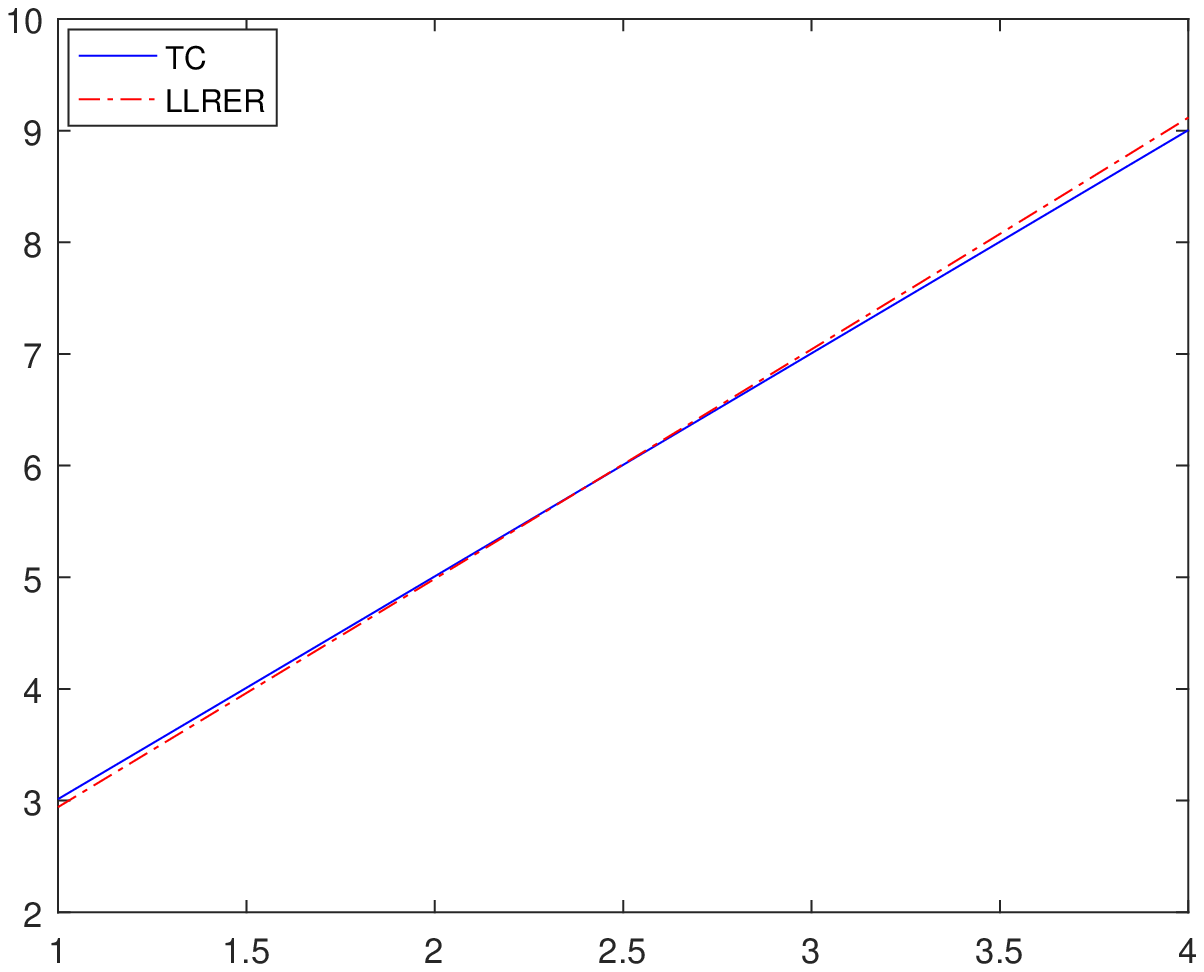}
	\end{minipage} \hfill
	\begin{minipage}[c]{.26\linewidth}
		\includegraphics[height=2in, width=2in]{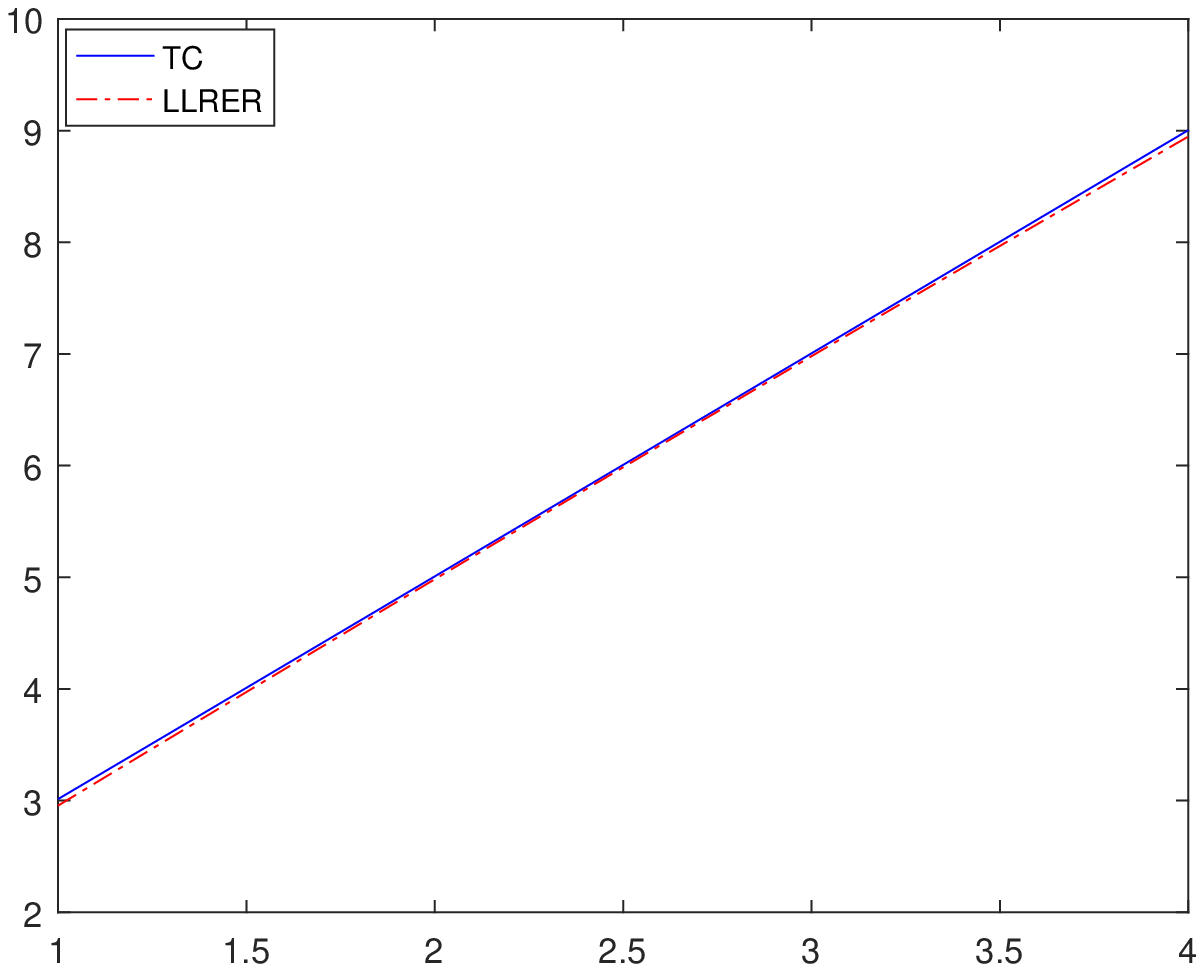}
	\end{minipage}\hfill\hfill
	\caption{\textcolor{blue}{$\mu(\cdot)$}, \textcolor{red}{$\widehat{\mu}(\cdot)$} with $n=300$ for C.P.$\approx 50\%$ and M.C.$=25,50,\;\text{and}\;100$ respectively.}\label{figure3}
\end{figure}
\subsection{Comparison to other kernel estimators:}
\vspace*{0.3cm}

\subsubsection{CR vs LLRER:}
\noindent  {\it Effect of C.P.:}\\
The proposed estimate shows an improvement over the CR estimate near the right tail where the data points are sparse and mostly uncensored. \hyperref[figure4]{Figure 4} shows that the LLRER estimator is much more robust to censoring than the CR, in particular for larger samples. 
\begin{figure}[!h]
	\begin{minipage}[c]{.26\linewidth}
		\includegraphics[height=2in, width=2in]{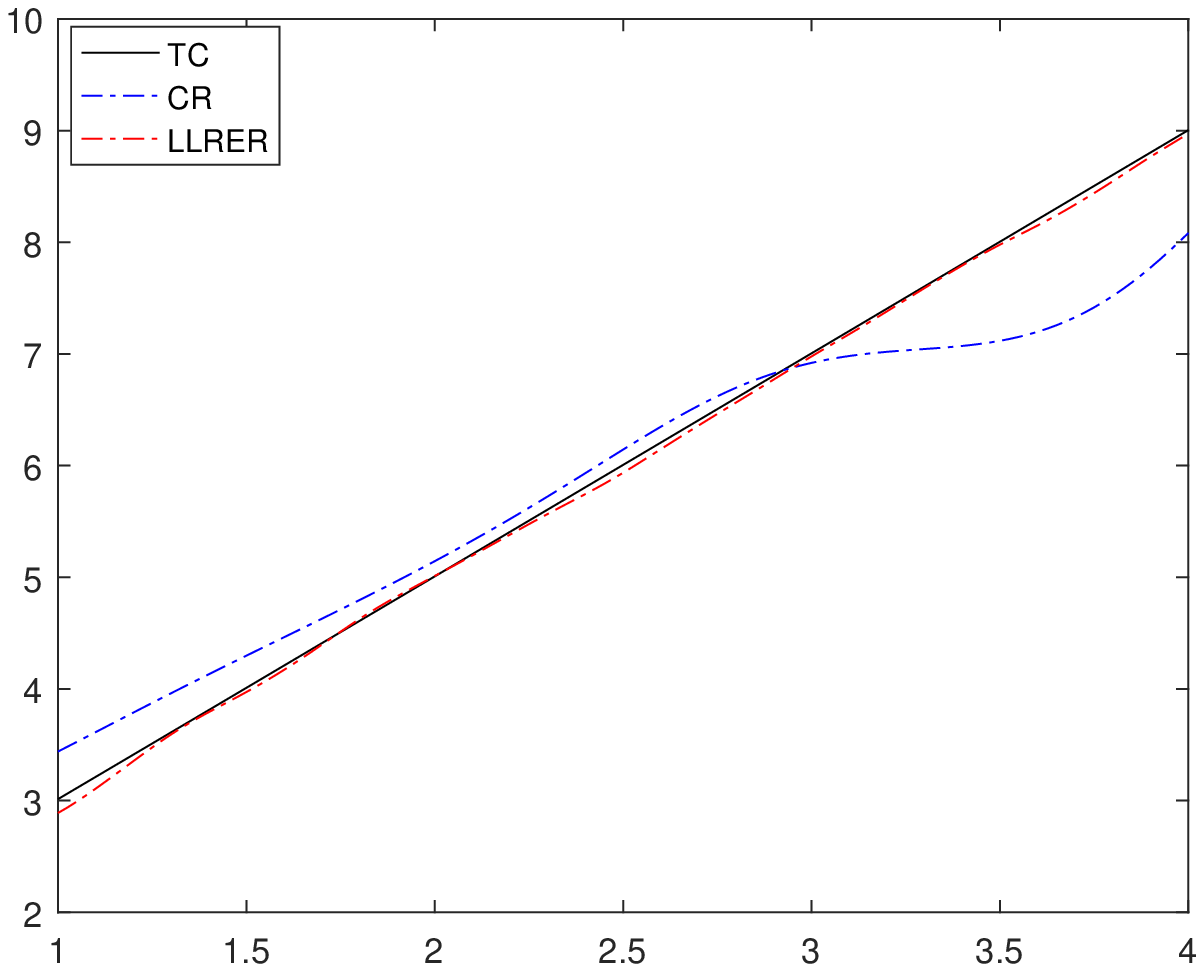}
	\end{minipage} \hfill
	\begin{minipage}[c]{.26\linewidth}
		\includegraphics[height=2in, width=2in]{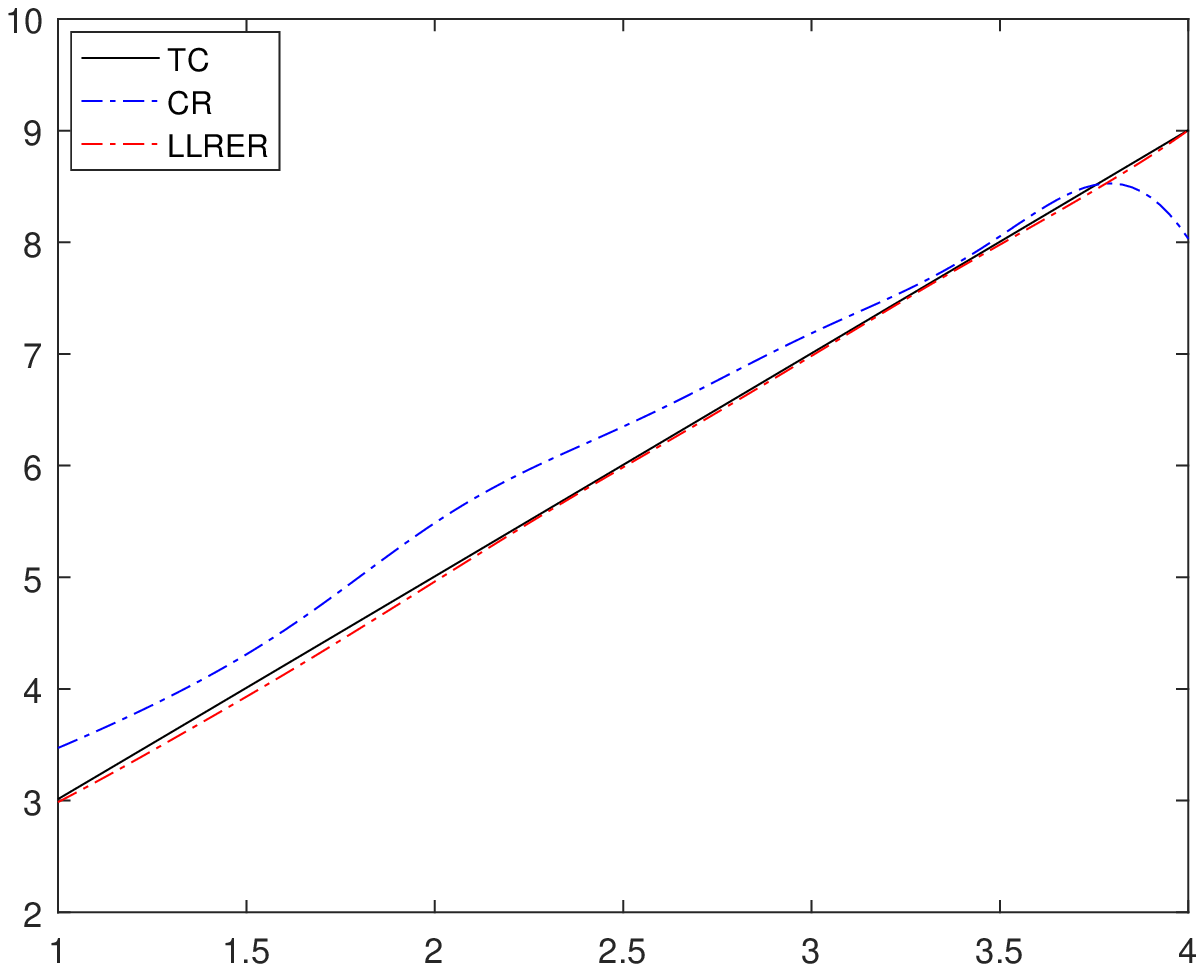}
	\end{minipage} \hfill
	\begin{minipage}[c]{.26\linewidth}
		\includegraphics[height=2in, width=2in]{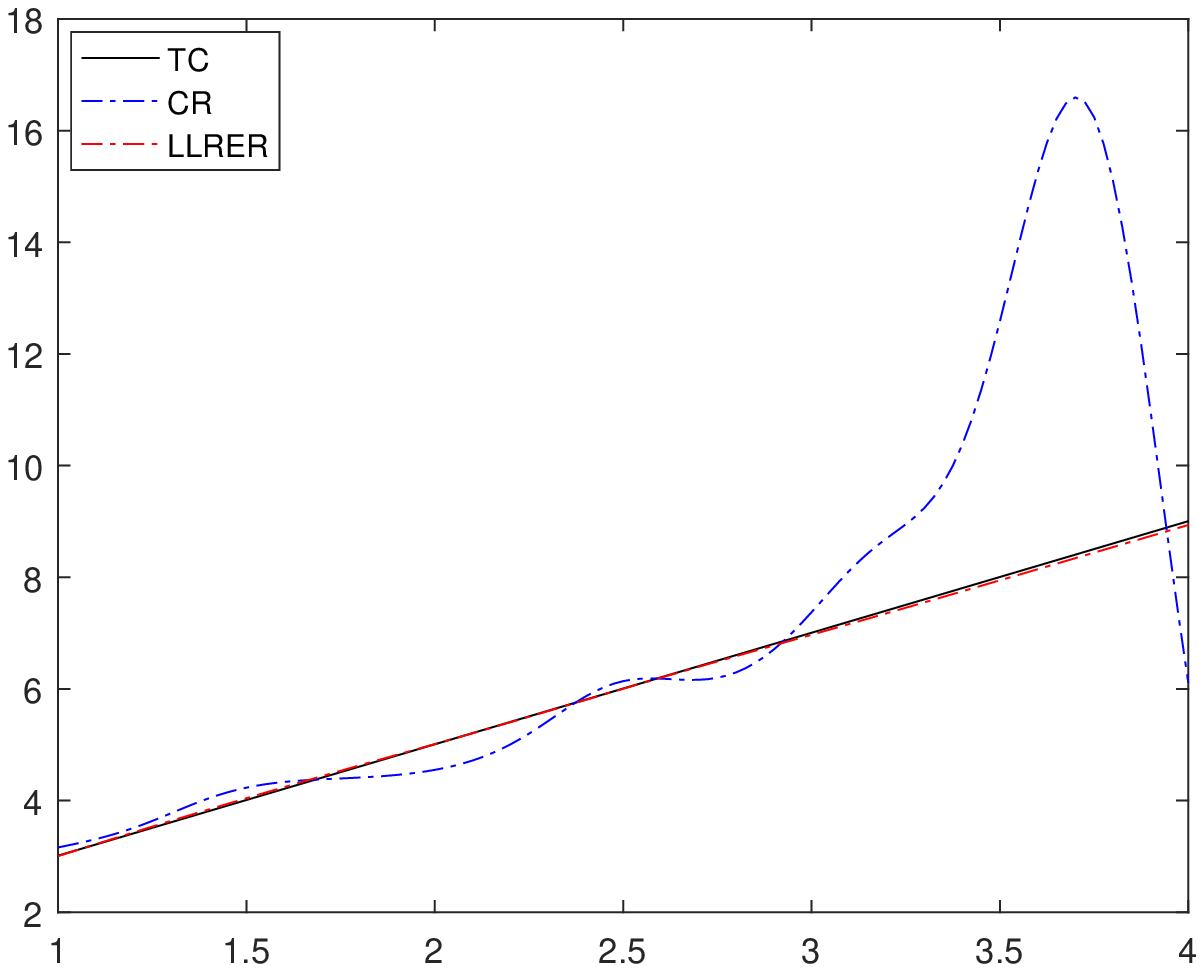}
	\end{minipage}\hfill\hfill
	\caption{\textcolor{black}{$\mu(\cdot)$}, \textcolor{red}{$\widehat{\mu}(\cdot)$} and \textcolor{blue}{$\widehat{m}(\cdot)$} with $n=300$ for C.P.$\approx 35,50$ and $70\%$ respectively.}\label{figure4}
\end{figure}
\vspace*{.51in}

\noindent {\it Effect of outliers:} \\
We compare the two models when the data contains outliers in the observed response value and we note that there is a significant difference between the two estimators for a fixed C.P. and sample size. As expected, when there are outliers, the relative regression estimator performs better than the Nadaraya-Watson and local linear estimators $m_n(\cdot)$ with respect to the number of outliers (see \hyperref[figure5]{Figure 5}). 
\begin{figure}[!h]
	\hspace*{-0.5cm}
	\begin{minipage}[c]{.26\linewidth}
		\includegraphics[height=2in, width=2in]{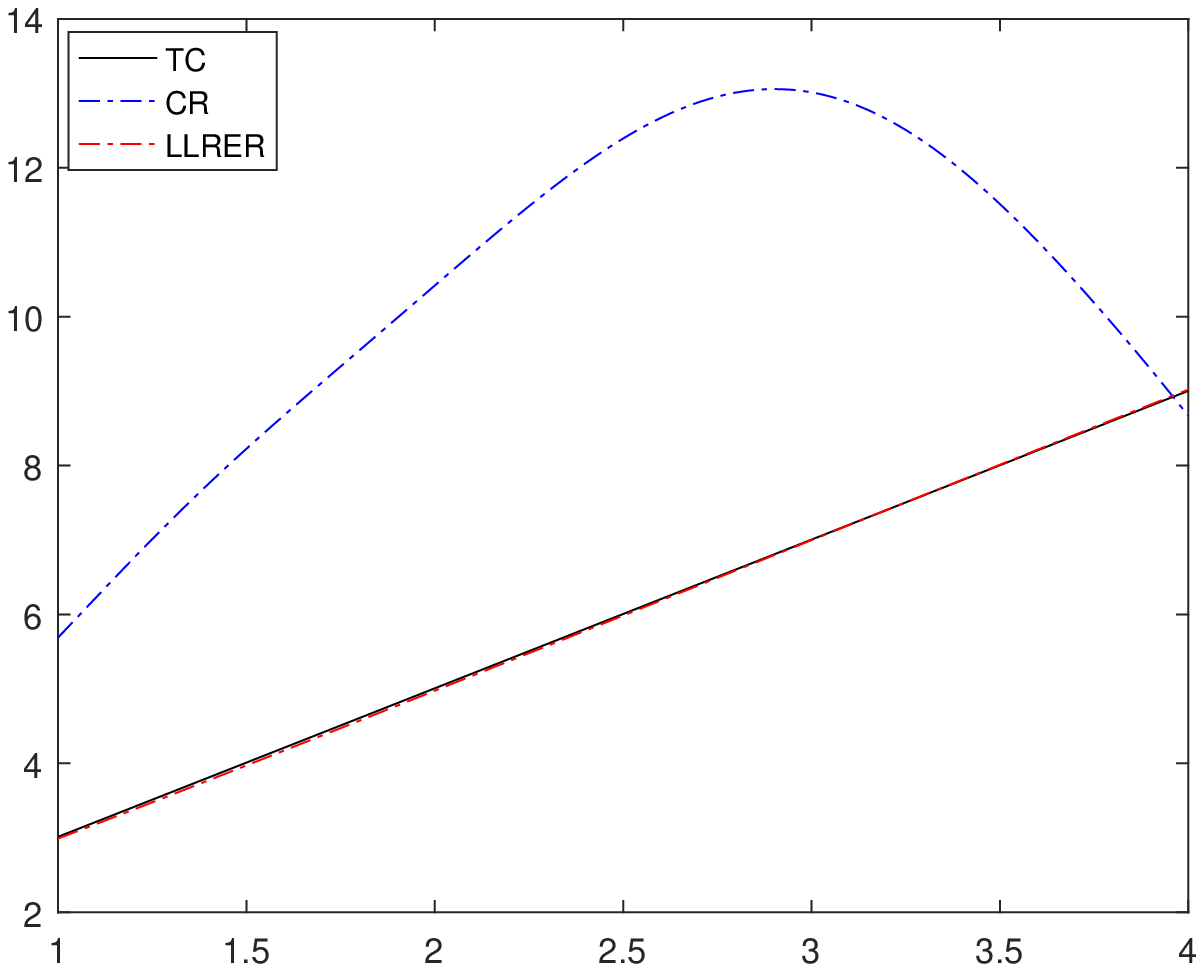}
	\end{minipage} \hfill
	\begin{minipage}[c]{.26\linewidth}
		\includegraphics[height=2in, width=2in]{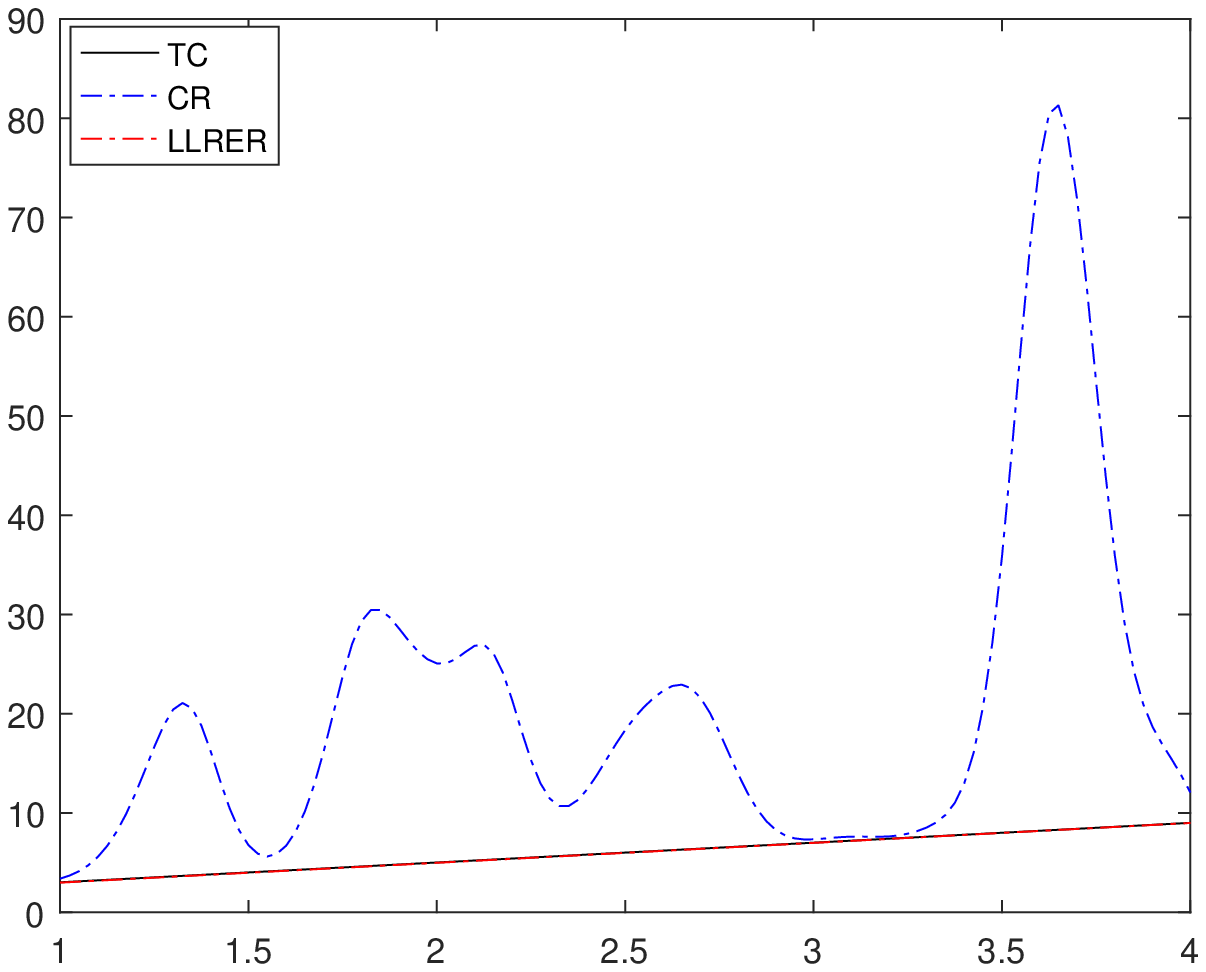}
	\end{minipage} \hfill
	\begin{minipage}[c]{.26\linewidth}
		\includegraphics[height=2in, width=2in]{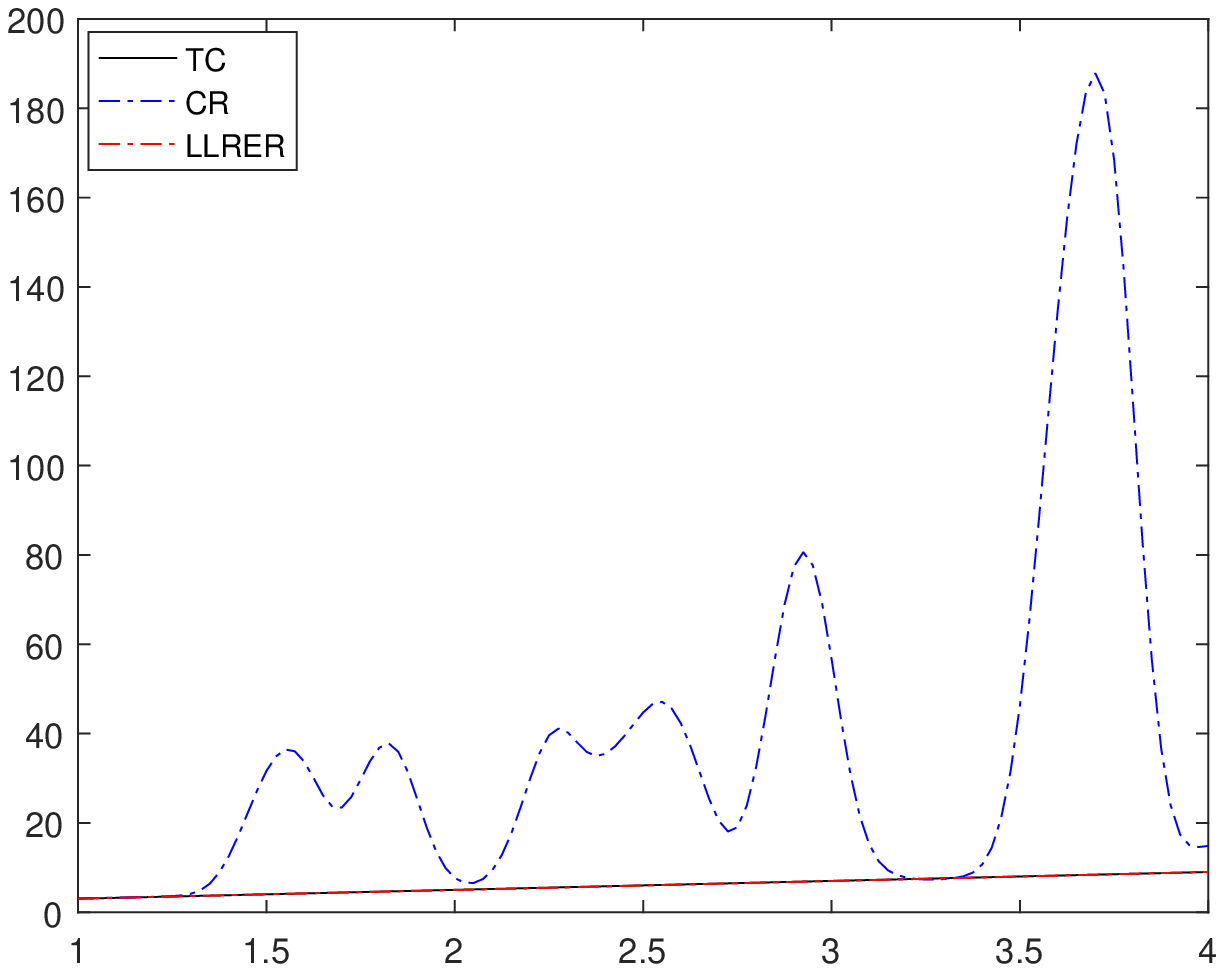}
	\end{minipage}\hfill\hfill
	\caption{\textcolor{black}{$\mu(\cdot)$}, \textcolor{red}{$\widehat{\mu}(\cdot)$} and \textcolor{blue}{$\widehat{m}(\cdot)$} with $n=300$ for C.P.$\approx 35\%$ and M.C.$=25,50,100$ respectively.}\label{figure5}
\end{figure}
\vspace*{.21in}

\subsubsection{LLCR vs LLRER}
\noindent  {\it Effect of C.P.:} \\
We observe from \hyperref[figure6]{Figure 6} that there is no meaningful difference between the LLCR and LLRER when the C.P. is low. The two predictors are basically equivalent and both show the good behavior. However for  high censorship rate our estimator remains resistant unlike its competitor which moves away from the edges.
\begin{figure}[!h]
	\hspace*{-0.5cm}
	\begin{minipage}[c]{.26\linewidth}
		\includegraphics[height=2in, width=2in]{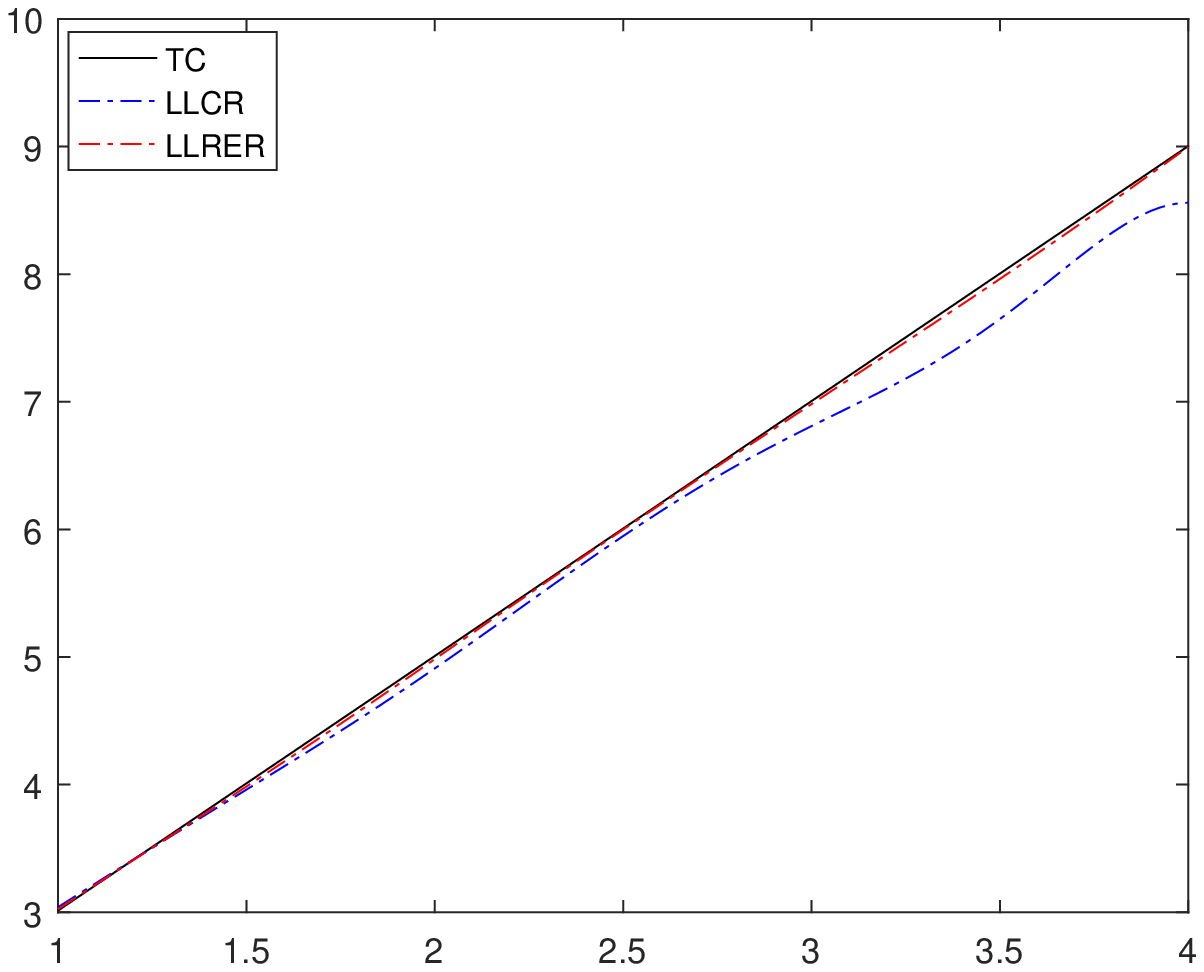}
	\end{minipage} \hfill
	\begin{minipage}[c]{.26\linewidth}
		\includegraphics[height=2in, width=2in]{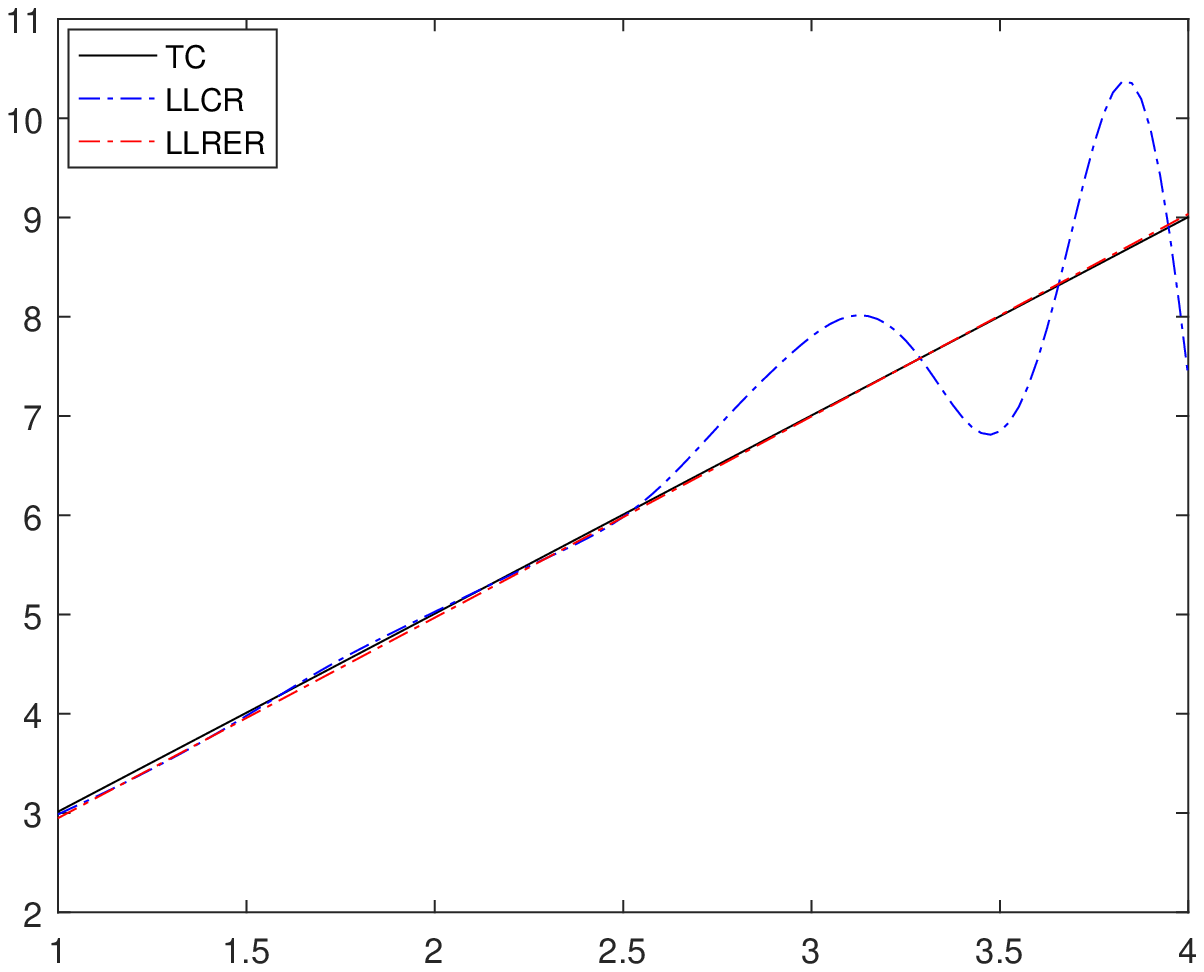}
	\end{minipage} \hfill
	\begin{minipage}[c]{.26\linewidth}
		\includegraphics[height=2in, width=2in]{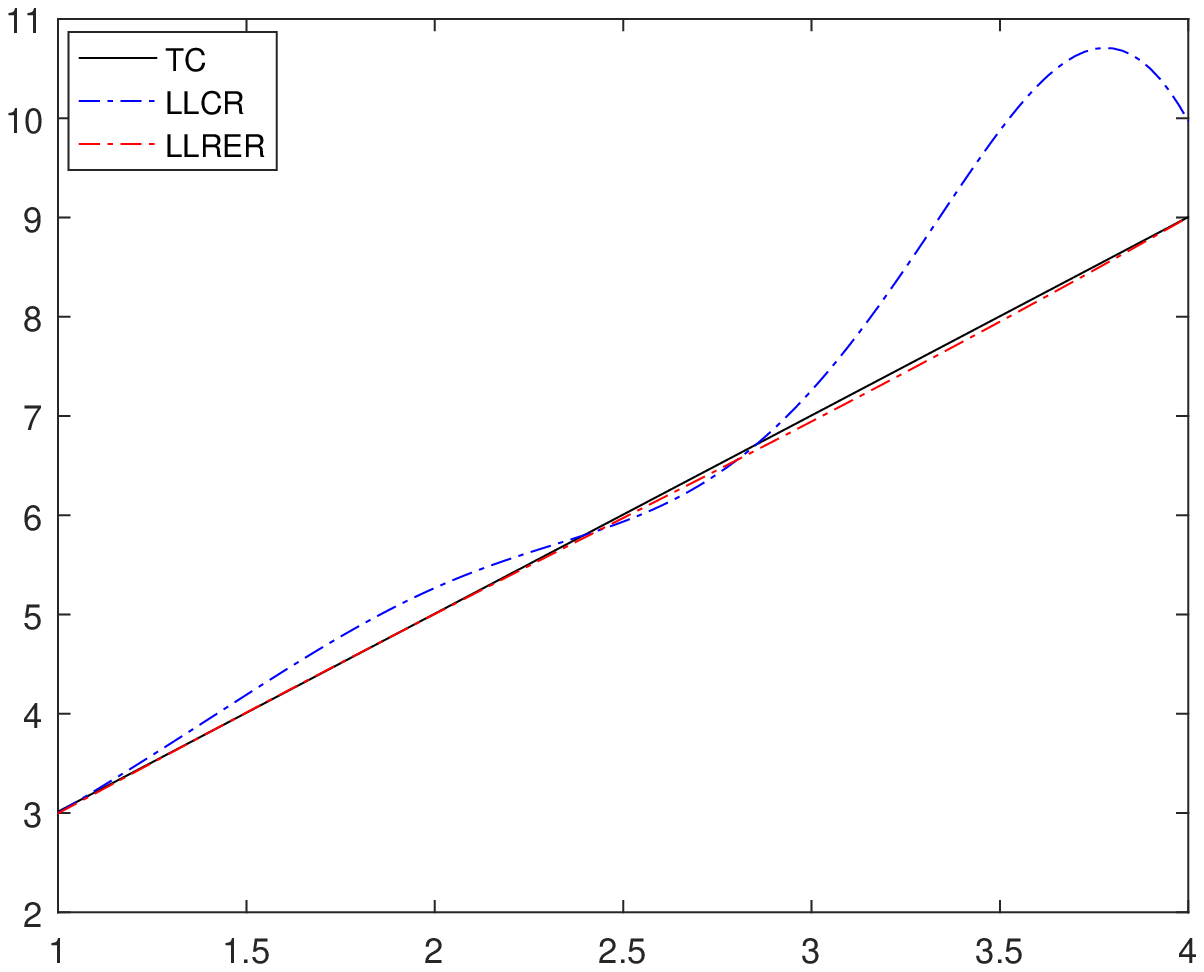}
	\end{minipage}\hfill\hfill
	\caption{\textcolor{black}{$\mu(\cdot)$}, \textcolor{red}{$\widehat{\mu}(\cdot)$} and \textcolor{blue}{$m_n(\cdot)$} with $n=300$ for C.P.$\approx 35,50$ and $66\%$ respectively.}\label{figure6}
\end{figure}
\vspace*{.251in}

\noindent {\it Effect of outliers:}\\
\hyperref[figure7]{Figure 7} shows clearly that the curve of the LLCR estimator is moves away from the TC when the M.C. increases which reflect the effectiveness of the procedure in presence of outliers.
\begin{figure}[!h]
	\hspace*{-0.5cm}
	\begin{minipage}[c]{.26\linewidth}
		\includegraphics[height=2in, width=2in]{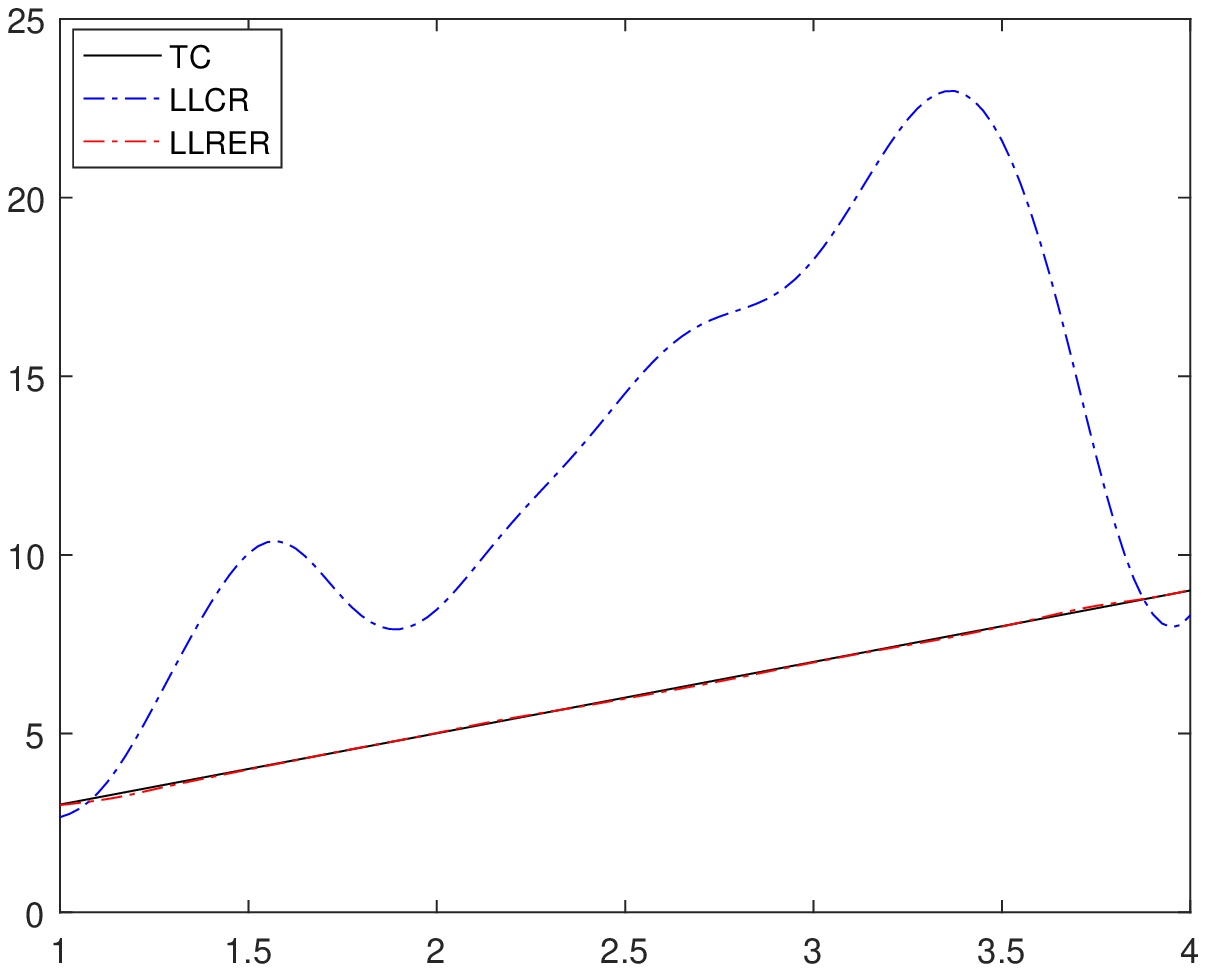}
	\end{minipage} \hfill
	\begin{minipage}[c]{.26\linewidth}
		\includegraphics[height=2in, width=2in]{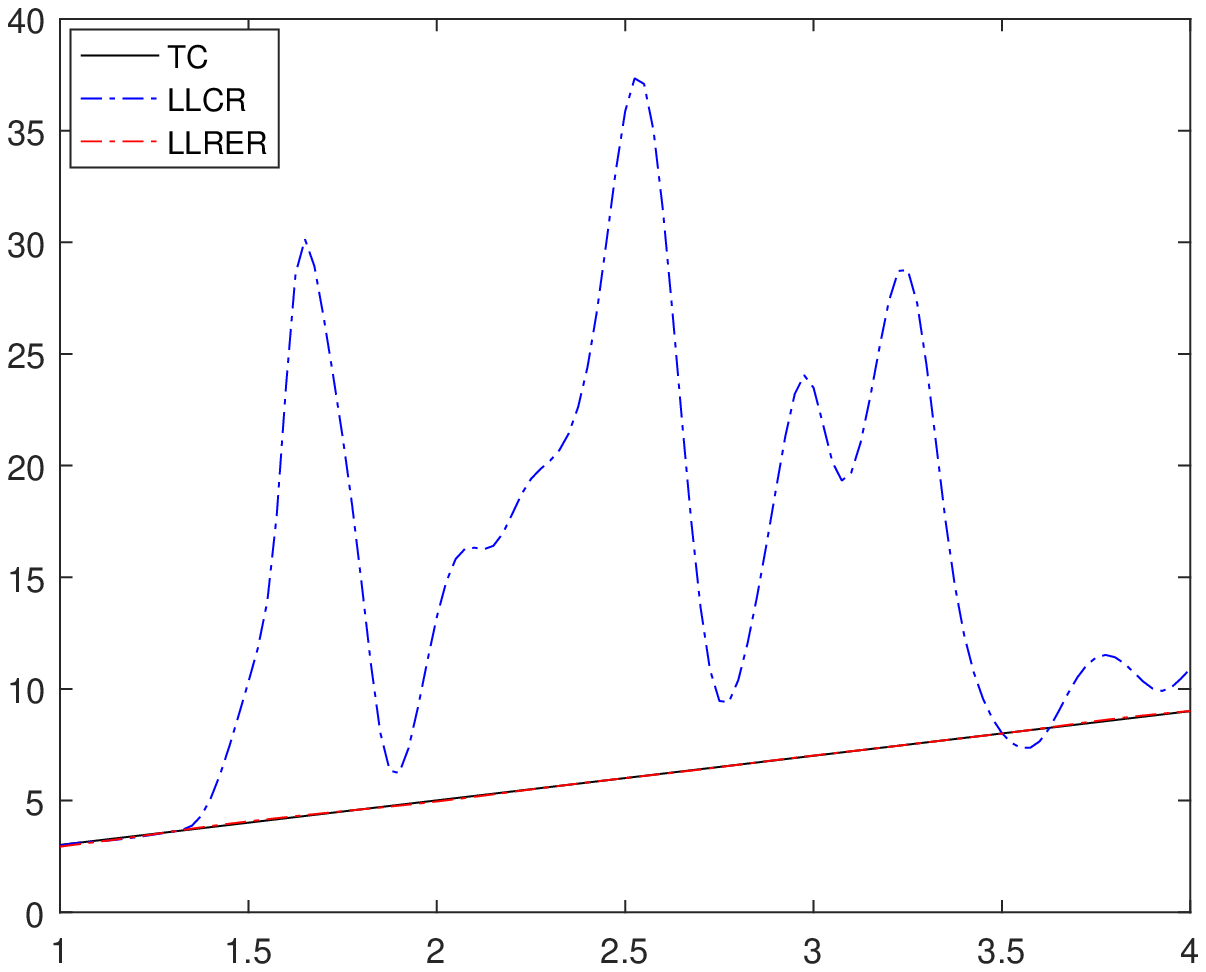}
	\end{minipage} \hfill
	\begin{minipage}[c]{.26\linewidth}
		\includegraphics[height=2in, width=2in]{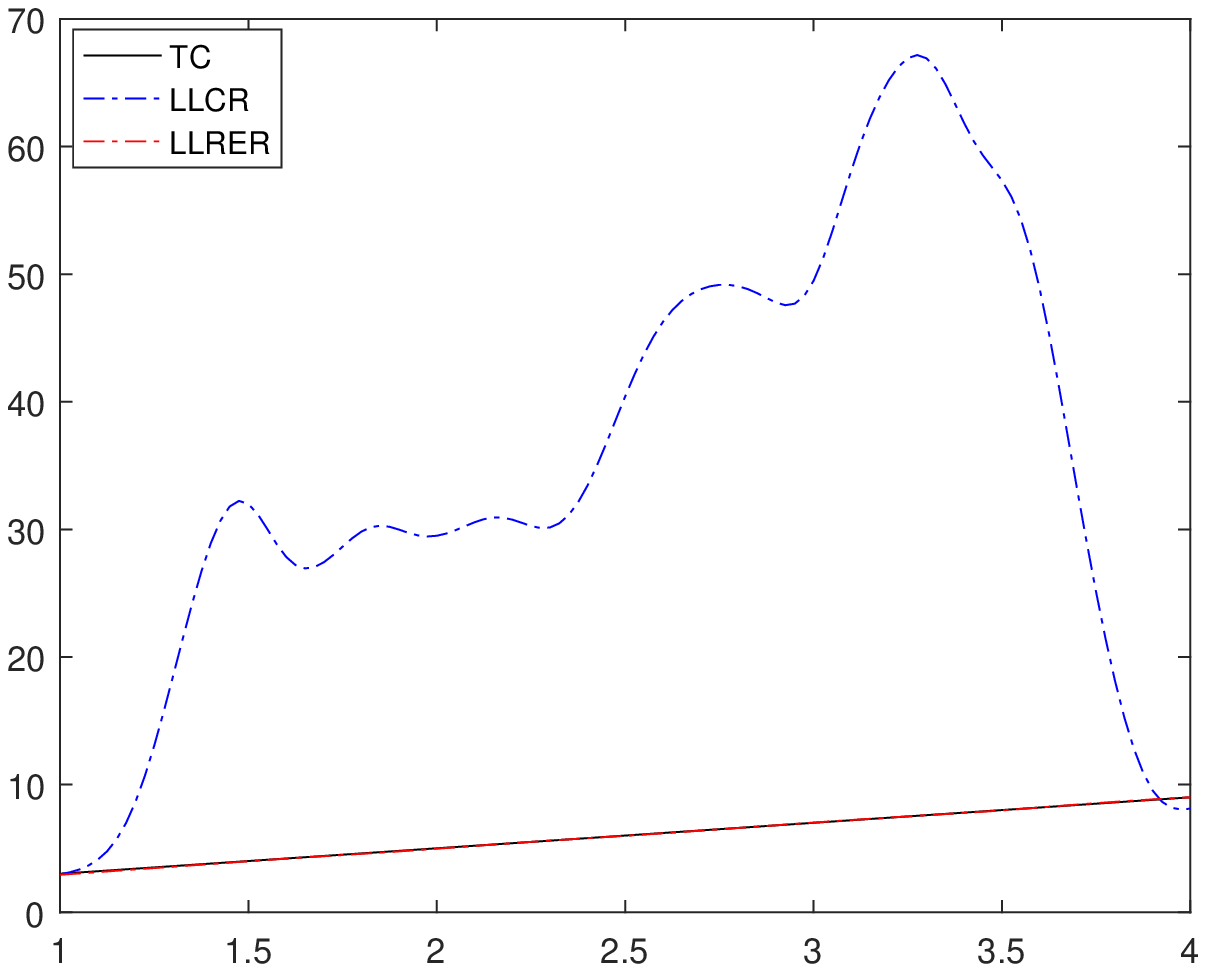}
	\end{minipage}\hfill\hfill
	\caption{\textcolor{black}{$\mu(\cdot)$}, \textcolor{red}{$\widehat{\mu}(\cdot)$} and \textcolor{blue}{$m_n(\cdot)$} with $n=300$ for C.P.$\approx 35\%$ and M.C.$=25,50,100$ respectively.}\label{figure7}
\end{figure}
\vspace*{0.3in}

\subsubsection{LLRER versus CR ans LLCR}
\begin{figure}[!h]
	\hspace*{-0.5cm}
	\begin{minipage}[c]{.26\linewidth}
		\includegraphics[height=2in, width=2in]{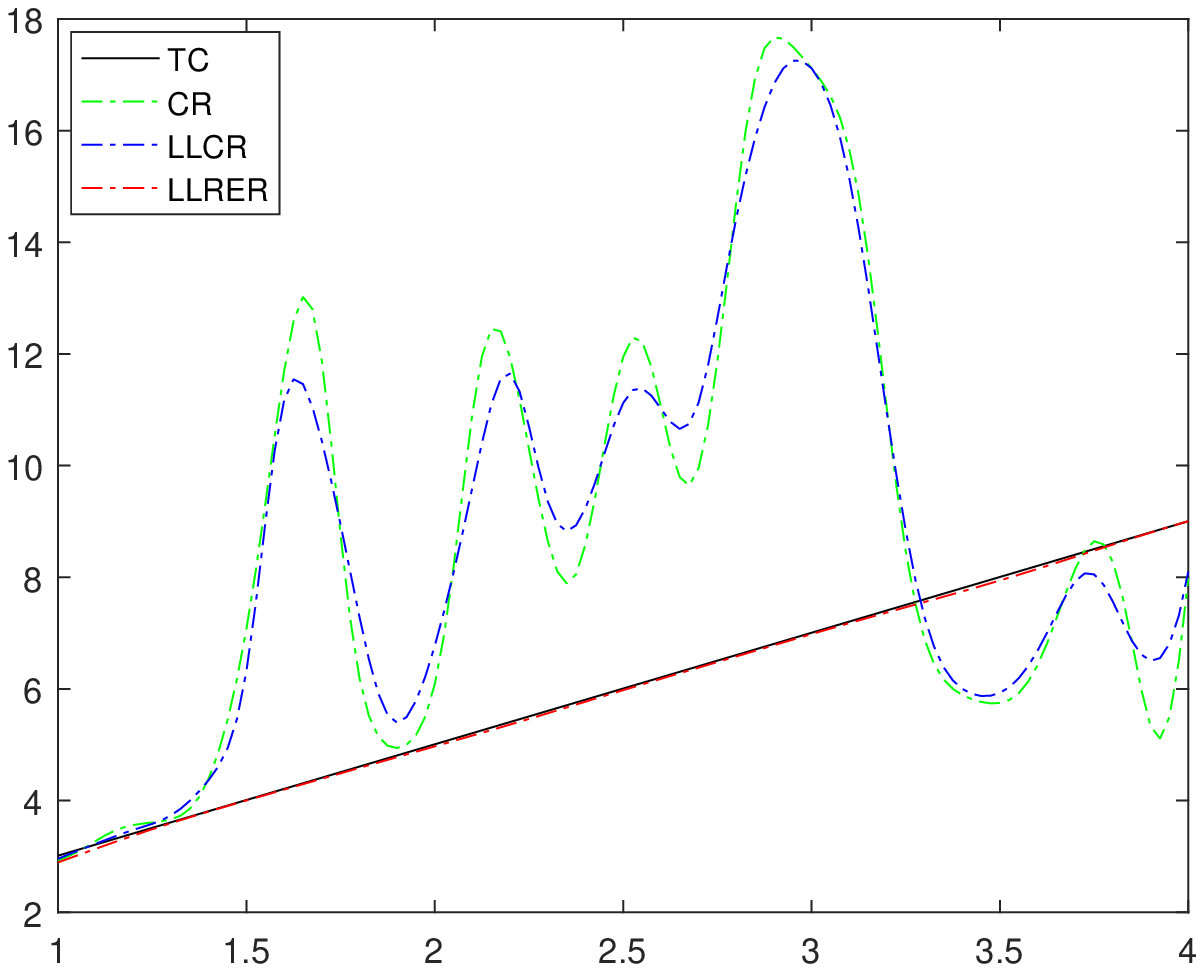}
	\end{minipage} \hfill
	\begin{minipage}[c]{.26\linewidth}
		\includegraphics[height=2in, width=2in]{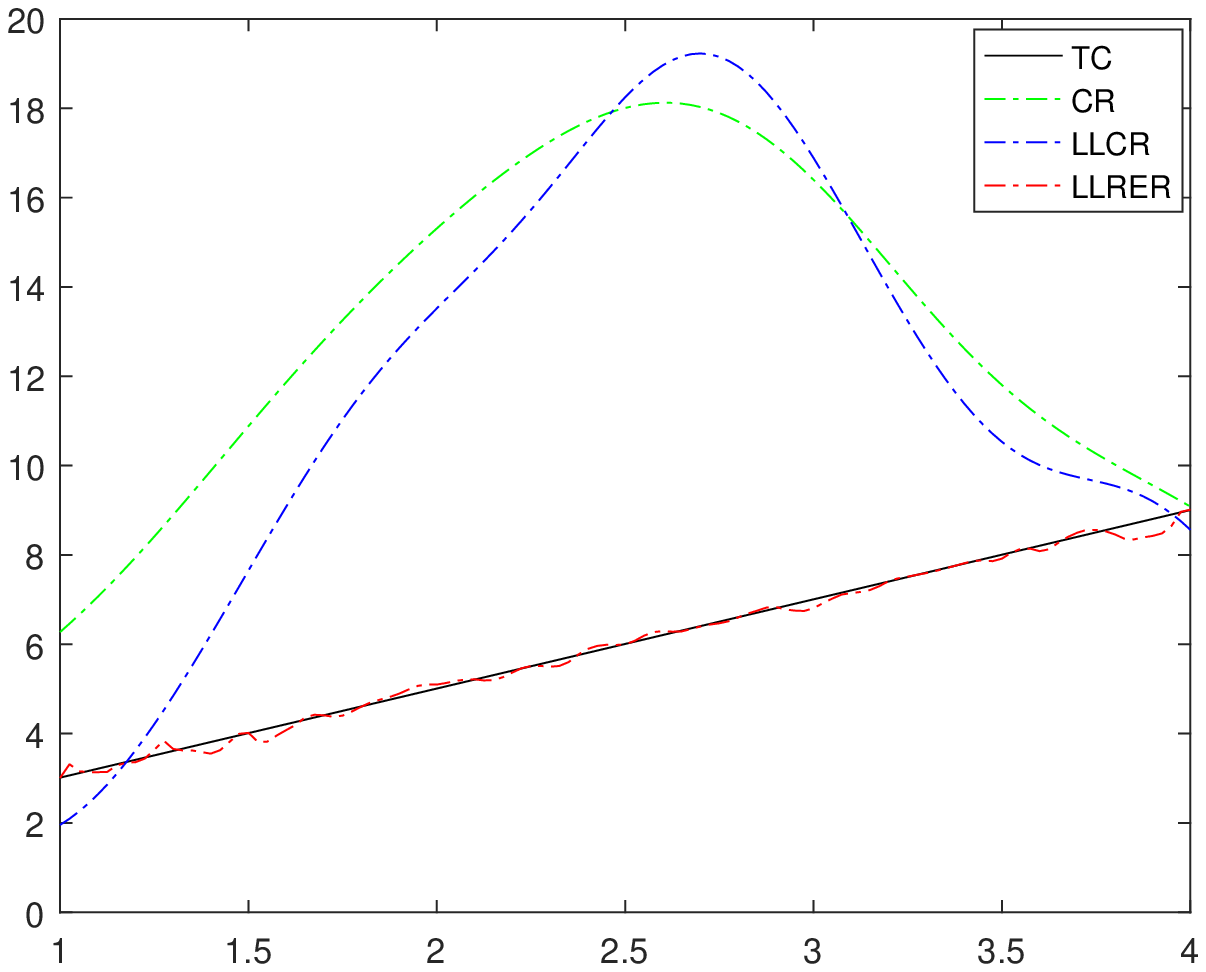}
	\end{minipage} \hfill
	\begin{minipage}[c]{.26\linewidth}
		\includegraphics[height=2in, width=2in]{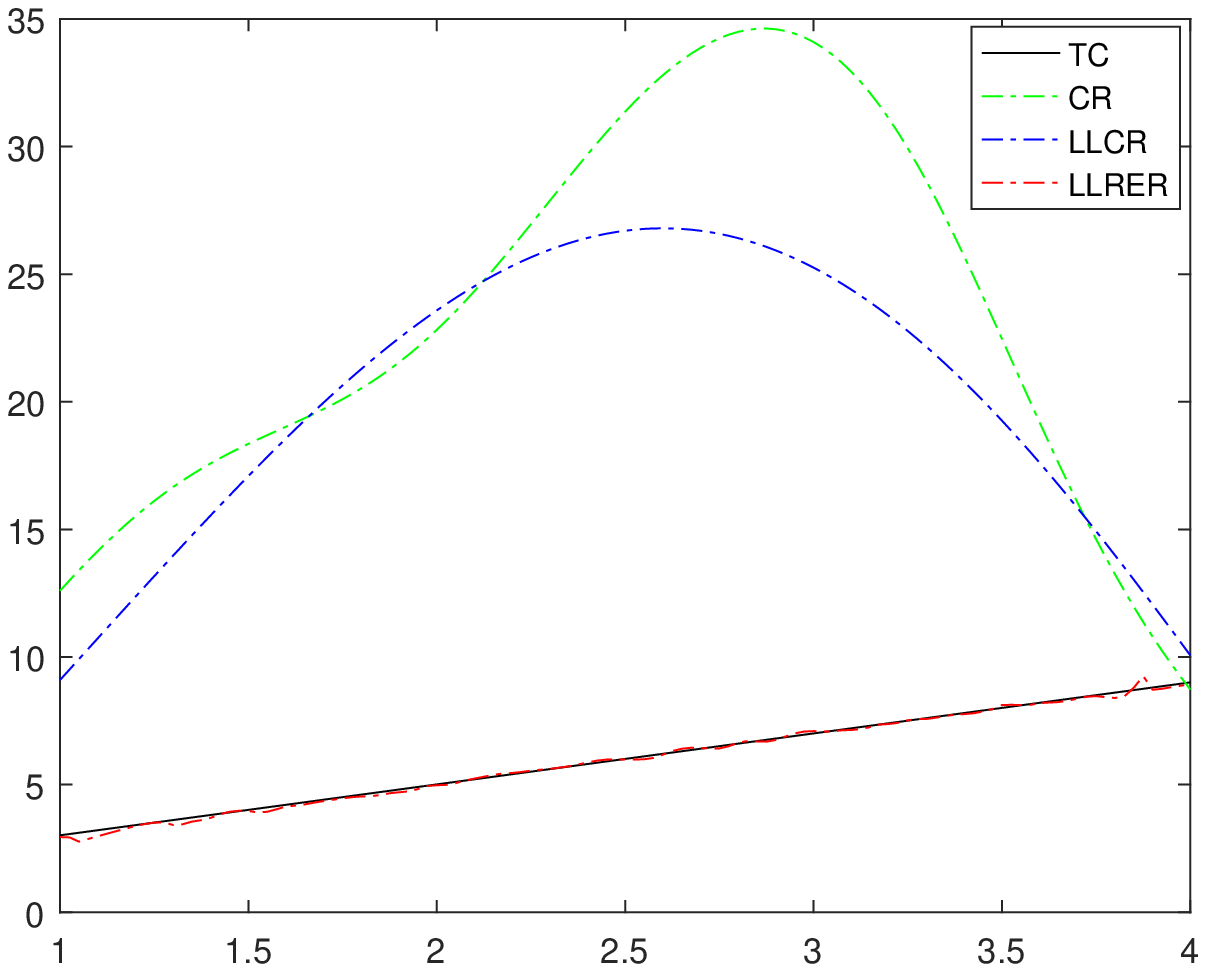}
	\end{minipage}\hfill\hfill
	\caption{\textcolor{black}{$\mu(\cdot)$}, \textcolor{green}{$\widehat{m}(\cdot)$}, \textcolor{red}{$\widehat{\mu}(\cdot)$} and \textcolor{blue}{$m_n(\cdot)$} with $n=300$ for C.P.$\approx 35\%$ and M.C.$=25,50,100$ respectively.}\label{figure8}
\end{figure}
\noindent Finally, in this figure, we can clearly see that in the presence of outliers, the new estimator obtained by combining the RER and LL methods is much more efficient compared to the two methods treated separately as that has been treated by many authors.

\section{Proofs and auxiliary results}\label{sect 5}
\begin{proof}[Proof of the Proposition 1.] Let introduce some notations for $\ell=1,2$ and $\gamma=0,1,2$:
	\begin{equation*}
	\widehat{S}_{\ell,\gamma}(x)=\frac{1}{nh} \sum_{i=1}^{n} \widehat{T}_i^{\star,-\ell} (X_i-x)^\gamma K_h(X_i-x) \;\;\;\text{and} \;\;\; 	\widetilde{S}_{\ell,\gamma}(x)=\frac{1}{nh} \sum_{i=1}^{n} T_i^{\star,-\ell} (X_i-x)^\gamma K_h(X_i-x).
	\end{equation*}
	We use the following decomposition:
	\begin{equation*}
	\begin{aligned}
	\widehat{\mu}_{\ell}(x)-\widetilde{\mu}_{\ell}(x)	&=\widehat{S}_{2,2}(x)\widehat{S}_{\ell,0}(x)-\widehat{S}_{2,1}(x)\widehat{S}_{\ell,1}(x)-\left(\widetilde{S}_{2,2}(x)\widetilde{S}_{\ell,0}(x)-\widetilde{S}_{2,1}(x)\widetilde{S}_{\ell,1}(x)\right)\\
	&=\widehat{S}_{2,2}(x)\widehat{S}_{\ell,0}(x)-\widetilde{S}_{2,2}(x)\widetilde{S}_{\ell,0}(x) - \left(\widehat{S}_{2,1}(x)\widehat{S}_{\ell,1}(x)-\widetilde{S}_{2,1}(x)\widetilde{S}_{\ell,1}(x)\right)\\
	&=:\mathcal{B}_{\ell,1}(x)-\mathcal{B}_{\ell,2}(x).
	\end{aligned}
	\end{equation*}
	On the one hand, for $\ell=1,2$, we get
	\begin{eqnarray}\label{decom1}
	\mathcal{B}_{\ell,1}(x)&=&\left( \widehat{S}_{2,2}(x)- \widetilde{S}_{2,2}(x)\right)\left(\widehat{S}_{\ell,0}(x)- \widetilde{S}_{\ell,0}(x)\right) +\left(\widetilde{S}_{\ell,0}(x) -\E[\widetilde{S}_{\ell,0}(x)]\right)\left( \widehat{S}_{2,2}(x)- \widetilde{S}_{2,2}(x)\right)\nonumber\\
	&+&\E[\widetilde{S}_{\ell,0}(x)]\left( \widehat{S}_{2,2}(x)- \widetilde{S}_{2,2}(x)\right)+ \left(\widetilde{S}_{2,2}(x)- \E[\widetilde{S}_{2,2}(x)]\right)\left(\widehat{S}_{\ell,0}(x)-\widetilde{S}_{\ell,0}(x)\right)\nonumber\\
	&+& \E[\widetilde{S}_{2,2}(x)]\left(\widehat{S}_{\ell,0}(x)-\widetilde{S}_{\ell,0}(x)\right).
	\end{eqnarray}
	On the other hand, for $\ell=1,2$, we get
	\begin{eqnarray}\label{decom2}
	\mathcal{B}_{\ell,2}(x)&=&\left( \widehat{S}_{2,1}(x)- \widetilde{S}_{2,1}(x)\right)\left(\widehat{S}_{\ell,1}(x)- \widetilde{S}_{\ell,1}(x)\right) +\left(\widetilde{S}_{2,1}(x) -\E[\widetilde{S}_{2,1}(x)]\right)\left( \widehat{S}_{\ell,1}(x)- \widetilde{S}_{\ell,1}(x)\right)\nonumber\\
	&+&\E[\widetilde{S}_{2,1}(x)]\left( \widehat{S}_{\ell,1}(x)- \widetilde{S}_{\ell,1}(x)\right)+ \left(\widetilde{S}_{\ell,1}(x)- \E[\widetilde{S}_{\ell,1}(x)]\right)\left(\widehat{S}_{2,1}(x)-\widetilde{S}_{2,1}(x)\right)\nonumber\\
	&+& \E[\widetilde{S}_{\ell,1}(x)]\left(\widehat{S}_{2,1}(x)-\widetilde{S}_{2,1}(x)\right).
	\end{eqnarray}
	It remains to study each term of the decomposition (\ref{decom1}) and (\ref{decom2}). For this, we will state and proof the following three  \hyperref[lem1]{Lemma 5.1-5.3}.

	\begin{lemma}\label{lem1}
		Under hypotheses \hyperref[H2i]{H2 i)} and \hyperref[H3]{H3}, for $\ell=1,2$, $\gamma=0,1,2$, and $n$ large enough, we have 
		\begin{equation*}
		\sup_{x \in \mathcal{C}}\Big| \widehat{S}_{\ell,\gamma}(x)-\widetilde{S}_{\ell,\gamma}(x)\Big|=\text{O}_{a.s.} \left(\sqrt{\frac{\log\log n}{n}}\right).
		\end{equation*}
	\end{lemma}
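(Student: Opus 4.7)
The plan is to isolate the sole source of discrepancy between $\widehat{S}_{\ell,\gamma}$ and $\widetilde{S}_{\ell,\gamma}$, namely the replacement of $\overline{G}$ by its Kaplan--Meier estimator $\overline{G}_n$, and to control the resulting error via the law of the iterated logarithm for $\overline{G}_n$. First I would rewrite the difference as a single kernel sum,
\[
\widehat{S}_{\ell,\gamma}(x)-\widetilde{S}_{\ell,\gamma}(x)=\frac{1}{nh}\sum_{i=1}^{n}\delta_{i}Y_{i}^{-\ell}\,\frac{\overline{G}(Y_{i})-\overline{G}_n(Y_{i})}{\overline{G}_n(Y_{i})\,\overline{G}(Y_{i})}\,(X_{i}-x)^{\gamma}K_h(X_{i}-x),
\]
and observe that only those $i$ with $\delta_{i}=1$ contribute, in which case $Y_i=T_i\le\tau_F$; hence all the values of $\overline{G}$ and $\overline{G}_n$ that actually appear are evaluated on $[0,\tau_F]$.

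Next I would bound the denominator $\overline{G}_n(Y_i)\overline{G}(Y_i)$ uniformly away from zero. Since $\overline{G}(\tau_F)>0$ by assumption and the Kaplan--Meier estimator is uniformly strongly consistent on $[0,\tau_F]$, for $n$ large enough $\inf_{t\le\tau_F}\overline{G}_n(t)\ge\overline{G}(\tau_F)/2$ almost surely, so the denominator is at least $\overline{G}^{2}(\tau_F)/2>0$. Combined with the iterated logarithm law for $\overline{G}_n$ (see e.g.\ F\"oldes--Rejt\H{o}), which gives
\[
\sup_{t\le\tau_F}\bigl|\overline{G}(t)-\overline{G}_n(t)\bigr|=\text{O}_{a.s.}\!\left(\sqrt{\log\log n / n}\right),
\]
factoring these two estimates out of the sum yields
\[
\sup_{x\in\mathcal{C}}\bigl|\widehat{S}_{\ell,\gamma}(x)-\widetilde{S}_{\ell,\gamma}(x)\bigr|\le C\sqrt{\frac{\log\log n}{n}}\;\sup_{x\in\mathcal{C}}\frac{1}{nh}\sum_{i=1}^{n}\delta_{i}Y_{i}^{-\ell}|X_{i}-x|^{\gamma}K_h(X_{i}-x).
\]

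The remaining task is to verify that the kernel-type sum on the right-hand side is $\text{O}_{a.s.}(1)$ uniformly in $x\in\mathcal{C}$. Using condition (\ref{bounded}) to bound $Y_i^{-\ell}$, the boundedness of $K$, and a change of variable $u=(y-x)/h$, the expectation of that sum equals $h^{\gamma}\int \tilde r_{\ell}(x+uh)|u|^{\gamma}K(u)\,du$ with $\tilde r_{\ell}(y)=\int\overline{G}(t)t^{-\ell}f_{T,X}(t,y)\,dt$, and this is $\text{O}(h^{\gamma})=\text{O}(1)$ uniformly in $x\in\mathcal{C}$ thanks to Hypothesis \hyperref[H2i]{H2 i)} and the boundedness of the marginal densities involved. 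The almost sure uniform concentration around this expectation is obtained by the same Bernstein/covering argument that underlies Proposition \ref{prop2} and produces an $o(1)$ remainder $\text{O}_{a.s.}(\sqrt{\log n/(nh)})$, so the bracket on the right is indeed $\text{O}_{a.s.}(1)$, giving the claimed rate.

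The main obstacle is obtaining the sharp $\sqrt{\log\log n/n}$ rate: a naive Hoeffding/Bernstein bound applied directly to a uniform version of $|\overline{G}_n-\overline{G}|$ would only yield $\sqrt{\log n/n}$, so the proof really has to invoke the iterated logarithm law of F\"oldes--Rejt\H{o} on $[0,\tau_F]$. Once that ingredient is in hand, the remainder of the argument is essentially bookkeeping: controlling the denominator and showing the residual kernel sum is a.s.\ bounded uniformly in $x\in\mathcal{C}$.
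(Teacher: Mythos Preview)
Your proposal is correct and follows essentially the same route as the paper: factor the difference $1/\overline{G}_n-1/\overline{G}$ out of the sum, invoke the iterated logarithm law for the Kaplan--Meier estimator on $[0,\tau_F]$ (the paper cites Deheuvels--Einmahl rather than F\"oldes--Rejt\H{o}, but the content is the same), and then show the residual kernel sum is $\text{O}_{a.s.}(1)$ uniformly over $\mathcal{C}$. If anything, your treatment of the denominator $\overline{G}_n(Y_i)\overline{G}(Y_i)$ and of the uniformity in $x$ of the remaining sum is slightly more careful than the paper's, which bounds the denominator directly by $\overline{G}^2(\tau_F)$ and appeals to the strong law of large numbers without discussing uniformity.
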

	\begin{proof}[Proof of Lemma 5.1.] For $\ell=1,2$, $\gamma=0,1,2$, we have 
		\begin{equation*}
		\begin{aligned}
		\sup_{x \in \mathcal{C}}\big| \widehat{S}_{\ell,\gamma}(x)-\widetilde{S}_{\ell,\gamma}(x)\big|&= \sup_{x \in \mathcal{C}} \Big| \frac{1}{nh} \sum_{i=1}^{n} \widehat{T}_i^{\star,-\ell}(X_i-x)^\gamma K_h(X_i-x)-\frac{1}{nh} \sum_{i=1}^{n} T_i^{\star,-\ell}(X_i-x)^\gamma K_h(X_i-x)\Big|\\
		&= \sup_{x \in \mathcal{C}}\Big| \frac{1}{nh} \left(\sum_{i=1}^{n} \frac{\delta_i Y^{-\ell}_i}{\overline{G}_n(Y_i)}(X_i-x)^\gamma K_h(X_i-x)- \sum_{i=1}^{n} \frac{\delta_i Y^{-\ell}_i}{\overline{G}(Y_i)}(X_i-x)^\gamma K_h(X_i-x)\right)\Big|\\
		&= \sup_{x \in \mathcal{C}} \Big| \frac{1}{nh} \sum_{i=1}^{n} \delta_i T^{-\ell}_i(X_i-x)^\gamma K_h(X_i-x) \left( \frac{1}{\overline{G}_n(Y_i)}-\frac{1}{\overline{G}(Y_i)}\right)\Big|\\
		& \leq \frac{1}{\overline{G}^2(\tau_F)} \sup_{t \leq \tau_F} \big| \overline{G}_n(t)-\overline{G}(t)\big| \times \sup_{x \in \mathcal{C}} \Big| \frac{1}{nh} \sum_{i=1}^{n} T^{-\ell}_i(X_i-x)^\gamma K_h(X_i-x) \Big|\\
		&=: \sup_{t \leq \tau_F} \mathcal{D}_1(t) \times \sup_{x \in \mathcal{C}} \big| \mathcal{D}_2(x)\big|.
		\end{aligned}
		\end{equation*}
		From Lemma 4.2. in \cite{Deuheuvels2000}, the first term of the right hand side is equal to:
		\begin{equation}\label{d1}
		\sup_{t \leq \tau_F} \mathcal{D}_1(t)=\text{O}_{a.s.}\left(\sqrt{\frac{\log \log n}{n}}\right) \quad \quad \quad \text{as} \quad \quad \quad n \rightarrow \infty.
		\end{equation}
		For the second term and using  the strong law of large numbers we have 
		\[\sup_{x \in \mathcal{C}}\left|\mathcal{D}_2(x)\right| \leq 	C \sup_{x \in \mathcal{C}} \left|\E\left[h^{-1} (X_1-x)^\gamma K_h(X_1-x)\right] \right|.\]
		By a change of variable, Taylor expansion and with the condition (\ref{bounded}), we get
		\begin{equation*}
		\begin{aligned}
		\E\left[h^{-1} (X_1-x)^\gamma K_h(X_1-x)\right]&= h^{-1} \int (u-x)^\gamma K_h(u-x) f(u) du\\
		&=h^{-1} \int (vh)^\gamma K(v) f(x+vh) hdv\\
		&= h^\gamma f(x) \int v^\gamma K(v) dv +h^{\gamma+1} \int v^{\gamma+1} K(v) f^{\prime}(\xi)) dv.\\
		\end{aligned}
		\end{equation*}
		Under the kernel hypothesis \hyperref[H2i]{ H2 i)} and the regularity hypothesis \hyperref[H3]{H3}, we get
		\begin{equation}\label{d2}
		\sup_{x \in \mathcal{C}}\left|\mathcal{D}_2(x)\right| =O\Big(h^{\gamma} \Big).
		\end{equation}
		Combining the results (\ref{d1}) and (\ref{d2}), the proof  of \hyperref[lem1]{Lemma 5.1} is achieved.
	\end{proof}
	
	\vspace*{-.25in}
	
	\begin{lemma}\label{lem2}
		Under hypotheses \hyperref[H1]{H1}, \hyperref[H2i]{H2 i)}, \hyperref[H3]{H3} and \hyperref[H4]{H4} for $\ell=1,2$, $\gamma=0,1,2,$ and $n$ large enough, we have 
		\begin{equation*}
		\sup_{x \in \mathcal{C}}\big| \widetilde{S}_{\ell,\gamma}(x)-\E[\widetilde{S}_{\ell,\gamma}(x)]\big|=\text{O}_{a.s.}\left(\sqrt{\frac{\log n}{nh}}\right).
		\end{equation*}
	\end{lemma}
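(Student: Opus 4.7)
The plan is to use a classical covering argument on the compact set $\mathcal{C}$ combined with Bernstein's exponential inequality. Writing
\[
\widetilde{S}_{\ell,\gamma}(x)-\E[\widetilde{S}_{\ell,\gamma}(x)] = \frac{1}{n}\sum_{i=1}^n \Delta_i(x),
\]
with $\Delta_i(x) := h^{-1}\, T_i^{\star,-\ell}(X_i-x)^\gamma K_h(X_i-x) - \E\bigl[h^{-1}\,T_1^{\star,-\ell}(X_1-x)^\gamma K_h(X_1-x)\bigr]$, reduces the problem to controlling the supremum over $x\in\mathcal{C}$ of a centered i.i.d.\ empirical process with a bandwidth-dependent envelope.

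First I would gather the two Bernstein inputs at a fixed $x$. Since $\tau_F\le\tau_G$ with $\overline{G}(\tau_F)>0$, together with condition (\ref{bounded}), the synthetic variable $T_i^{\star,-\ell}$ is almost-surely bounded by a constant; since the effective support of $K_h(X_i-x)$ has length of order $h$, I obtain a uniform envelope $\|\Delta_i\|_\infty \le C\, h^{\gamma-1}$. For the variance, conditioning on $X_1$ gives
\[
\E\!\left[T_1^{\star,-2\ell}\,\big|\,X_1\right]=\E\!\left[\frac{T_1^{-2\ell}}{\overline{G}(T_1)}\,\Big|\,X_1\right],
\]
which together with Hypothesis H5 (taking $k=2$) and the standard change of variable $v=(u-x)/h$ in the kernel integral yields $\V\bigl(\Delta_1(x)\bigr) \le C\, h^{2\gamma-1}$, uniformly in $x\in\mathcal{C}$.

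Next I would discretize. Cover $\mathcal{C}$ by $d_n = O(1/r_n)$ intervals of radius $r_n = h^2/n$ centered at points $x_1,\ldots,x_{d_n}$. Assuming the (standardly implicit) Lipschitz regularity of $K$, the map $x\mapsto \Delta_i(x)$ is Lipschitz with constant $O(h^{-2})$, so the oscillation of $\widetilde{S}_{\ell,\gamma}(\cdot)-\E[\widetilde{S}_{\ell,\gamma}(\cdot)]$ within each cell is $O(1/n)$, negligible compared to the target rate $\sqrt{\log n/(nh)}$. Bernstein's inequality applied at each center with $\epsilon_n := \epsilon_0 \sqrt{\log n/(nh)}$ then gives, under H1 (for which the variance term $C h^{2\gamma-1}$ dominates the Bernstein denominator),
\[
\Pr\!\Bigl(\bigl|\widetilde{S}_{\ell,\gamma}(x_k)-\E[\widetilde{S}_{\ell,\gamma}(x_k)]\bigr| > \epsilon_n\Bigr) \le 2\,\exp\!\bigl(-c\,\epsilon_0^2 \log n\bigr) = 2\,n^{-c\epsilon_0^2},
\]
for some $c>0$. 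A union bound over the $d_n$ centers followed by the Borel--Cantelli lemma, with $\epsilon_0$ chosen large enough to ensure $\sum_n d_n\, n^{-c\epsilon_0^2} < \infty$ (feasible because H1 limits how fast $h$ may decay), delivers the claim.

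The step I expect to require the most care is the equicontinuity argument: it demands Lipschitz regularity of $K$ beyond what is explicitly stated in H2, and the mesh $r_n$ must be calibrated jointly with $h$ and the Bernstein parameter $\epsilon_0$ so that the discretization error remains $o\bigl(\sqrt{\log n/(nh)}\bigr)$ while $d_n$ stays small enough for the union bound to survive, uniformly over the finite index set $\ell\in\{1,2\}$ and $\gamma\in\{0,1,2\}$.
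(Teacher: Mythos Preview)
Your proposal is correct and follows essentially the same architecture as the paper's proof: cover $\mathcal{C}$ by a finite grid, apply an exponential inequality at each grid point, take a union bound, and conclude via Borel--Cantelli. The only cosmetic difference is that the paper bounds all absolute moments $\E|\Delta_i(x)|^{\nu}=O(h^{-\nu+1})$ and invokes the Bernstein-type Corollary~A.8 of Ferraty and Vieu with $a^{2}=h^{-1}$ rather than the classical Bernstein inequality with your variance and envelope bounds; your explicit remark that the oscillation step requires Lipschitz regularity of $K$ beyond H2 is in fact more careful than the paper, which simply writes the discretization error as $Cb_n^{q}$ without justification.
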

	\begin{proof}[Proof of Lemma 5.2.] Let
		\begin{equation*}
		\mathcal{C}_n=\left\{ x_i-b_n,x_i+b_n,\;  1 \leq i \leq d_n\right\}
		\end{equation*}
		is the intervals extremities grid where $b_n=n^{-1/2q}$ for $q>0$ and cover the compact set $\mathcal{C}$ by $\displaystyle \cup_{i=1}^{d_n}\big[ x_i-b_n,x_i+b_n\big]$ with $d_n=\text{O}\left(n^{1/2q}\right)$. 
		\[\sup_{x \in \mathcal{C}} \big|\widetilde{S}_{\ell,\gamma}(x)-\E[\widetilde{S}_{\ell,\gamma}(x)]\big| \leq \max_{1\leq i \leq d_n} \max_{x \in \mathcal{C}_n} \big|\widetilde{S}_{\ell,\gamma}(x)-\E[\widetilde{S}_{\ell,\gamma}(x)]\big| + 2^q C b_n^q.\]
		using  $b_n=n^{-1/2q}$ then  \[b_n^q=\text{O}\left(\sqrt{\frac{\log n}{nh}}\right).\] 
		For this, observe that for all $\varepsilon>0$, 
		\begin{equation*}
		\mathbb{P}\left(\max_{x\in \mathcal{C}_n}\big|\widetilde{S}_{\ell,\gamma}(x)-\E[\widetilde{S}_{\ell,\gamma}(x)]\big|>\varepsilon \right) \leq \sum_{x \in \mathcal{C}_n} \mathbb{P} \left(\big|\widetilde{S}_{\ell,\gamma}(x)-\E[\widetilde{S}_{\ell,\gamma}(x)]\big|>\varepsilon\right).
		\end{equation*}
		Let us write for $\ell=1,2$, $\gamma=0,1,2$ and $x \in \mathcal{C}_n$
		\begin{equation*}
		\begin{aligned}
		\widetilde{S}_{\ell,\gamma}(x)-\E[\widetilde{S}_{\ell,\gamma}(x)]&=\frac{1}{nh} \sum_{i=1}^{n}T^{\star,-\ell}_i(X_i-x)^{\gamma} K_h(X_i-x) - \E\left[\frac{1}{nh} \sum_{i=1}^{n}T^{\star,-\ell}_i(X_i-x)^{\gamma} K_h(X_i-x)\right]\\
		&=\frac{1}{n} \displaystyle \sum_{i=1}^{n}\frac{T^{\star,-\ell}_i(X_i-x)^{\gamma} K_h(X_i-x) - \E\left[T^{\star,-\ell}_i(X_i-x)^{\gamma} K_h(X_i-x)\right]}{h}\\
		&=:\frac{1}{n} \sum_{i=1}^{n} \mathcal{A}_{\gamma,i}^{\ell}(x).
		\end{aligned}
		\end{equation*}
		In view of \hyperref[coro]{Corollary A.8.} (see Appendix), we focus on the absolute moments of order $\nu$ of $\mathcal{A}_{\gamma,i}^{\ell}(x)$
		\begin{equation*}
		\begin{aligned}
		\E|\mathcal{A}_{\gamma,i}^{\ell}(x)|^\nu &= \E \left| h^{-\nu} \left(T^{\star,-\ell}_i(X_i-x)^{\gamma} K_h(X_i-x)- \E\left[T^{\star,-\ell}_i(X_i-x)^{\gamma} K_h(X_i-x)\right]\right)^\nu\right|\\
		&= h^{-\nu} \E \left| \sum_{k=0}^{\nu} c_{k,\nu} \left(T^{\star,-\ell}_i(X_i-x)^\gamma K_h(X_i-x)\right)^k \E\left[T^{\star,-\ell}_i(X_i-x)^\gamma K_h(X_i-x)\right]^{\nu-k}\right|\\
		&\leq h^{-\nu} \sum_{k=0}^{\nu} c_{k,\nu}  \left| \E\left[\left(T^{\star,-\ell}_1(X_1-x)^\gamma K_h(X_1-x)\right)^k\right] \E\left[T^{\star,-\ell}_1(X_1-x)^\gamma K_h(X_1-x)\right]^{\nu-k}\right|.\\
		\end{aligned}
		\end{equation*}
		On the one hand, using the conditional expectation property, Taylor expansion and  under \hyperref[H2i]{H2 i)} and \hyperref[H5]{H5}, we have 
		\begin{equation*}
		\begin{aligned}
		\E\left[\left(T^{\star,-\ell}_1(X_1-x)^\gamma K_h(X_1-x)\right)^k\right]&=\E\left[T^{\star,-k\ell}_1(X_1-x)^{\gamma k} K_h^k(X_1-x)\right]\\
		&=\E\left[(X_1-x)^{\gamma k} K_h^k(X_1-x)\E[T^{\star,-k\ell}_1|X_1]\right]\\
		&=\int (u-x)^{\gamma k} K_h^k(u-x)\E[T^{\star,-k\ell}_1|X_1=u]f(u)du\\
		\end{aligned}
		\end{equation*}  
		\begin{equation*}
		\begin{aligned}
		&\hspace*{1cm}\leq \frac{1}{\overline{G}^{k-1}(\tau_F)}\int (u-x)^{\gamma k} K_h^k(u-x)\int t^{-\ell k}f_{T|X}(t|u)dt f(u)du\\
		&\hspace*{1cm}= \frac{1}{\overline{G}^{k-1}(\tau_F)}\int (u-x)^{\gamma k} K_h^k(u-x)\upsilon_{\ell,k}(u)du\\
		&\hspace*{1cm}= \frac{h^{\gamma k+1}}{\overline{G}^{k-1}(\tau_F)}\int s^{\gamma k} K^k(s)\upsilon_{\ell,k}(x+sh)ds\\
		&\hspace*{1cm}= \frac{h^{\gamma k+1}}{\overline{G}^{k-1}(\tau_F)}\upsilon_{\ell,k}(x) \int s^{\gamma k} K^k(s)ds+\frac{h^{\gamma k+2}}{\overline{G}^{k-1}(\tau_F)}\int s^{\gamma k+1} K^k(s)       \upsilon_{\ell,k}^{\prime}(\xi)ds.
		\end{aligned}
		\end{equation*}  
		On the other hand, using the same arguments as previously and under \hyperref[H2i]{H2 i)}, \hyperref[H4]{H4} we have
		\begin{equation*}
		\begin{aligned}
		\E\left[T_1^{\star,-\ell}(X_1-x)^{\gamma} K_h(X_1-x)\right]^{\nu-k}&=\left(\int (u-x)^{\gamma} K_h(X_1-x)\E\left[T_1^{\star,-\ell}|X_1=u\right] f(u)du\right)^{\nu-k}\\
		&=\left(\int (hv)^{\gamma} K(v)\mu_{\ell}(u)f(u)du\right)^{\nu-k}\\
		&=\left(h^{\gamma+1}\int v^{\gamma} K(v)r_{\ell}(x+vh)dv\right)^{\nu-k}\\
		&\hspace*{-.61in}=\left(h^{\gamma+1}r_{\ell}(x)\int v^{\gamma} K(v)dv+h^{\gamma+2}\int v^{\gamma+1} K(v)r_{\ell}^{\prime}(\xi)dv\right)^{\nu-k}.\\
		\end{aligned}
		\end{equation*}
		Then, for $\ell=1,2$, $\gamma=0,1,2$ and for all $\nu \geq 2$, we get easily 
		\begin{equation*}
		\begin{aligned}
		\E|\mathcal{A}_{\gamma,1}^\ell(x)|^\nu &\leq \text{O}(h^{-\nu}) \times \text{O}(h^{\gamma k+1}) \times \text{O}(h^{(\gamma+1)(\nu-k)})\\
		&= \text{O}(h^{\gamma \nu-k+1})\\
		&= \text{O}(\max_{1 \leq k \leq \nu}h^{-k+1})\\
		&= \text{O}(h^{-\nu+1}).\\
		\end{aligned}
		\end{equation*}
		Now, we can apply the exponential inequality in \hyperref[coro]{Corollary A.8.} by choosing $a^2=h^{-1}$, we get
		\begin{equation*}
		\mathbb{P}\left( \left| \widetilde{S}_{\ell,\gamma}(x)-\E[\widetilde{S}_{\ell,\gamma}(x)] \right|>\varepsilon \right)=\mathbb{P}\left( \left|\sum_{i=1}^n \mathcal{A}_{\gamma,i}^\ell(x) \right|>\varepsilon n\right)\leq 2 \exp \left( -\frac{\varepsilon^2 nh}{2(1+\varepsilon)}\right).
		\end{equation*}
		Hence, for a fixed $\varepsilon_0$,  choosing $\varepsilon=\varepsilon_0 \left(\frac{\log n}{nh}\right)^{1/2}$,  we get
		\begin{equation*}
		\mathbb{P}\left( \left| \widetilde{S}_{\ell,\gamma}(x)-\E[\widetilde{S}_{\ell,\gamma}(x)] \right|>\varepsilon\right) \leq 2 \exp \left\{ -\frac{\varepsilon_0^2 \log n}{2\left(1+\varepsilon_0 \sqrt{\frac{\log n}{nh}}\right)}\right\}
		\end{equation*}
		and for $n$ large enough, we have  
		\begin{equation*}
		\mathbb{P}\left( \left| \widetilde{S}_{\ell,\gamma}(x)-\E[\widetilde{S}_{\ell,\gamma}(x)] \right|>\varepsilon\right) \leq 2 \exp \left(-\frac{\varepsilon_0^2}{4} \log n\right)=2 n^{-\frac{\varepsilon_0^2}{4}}
		\end{equation*}
		which gives 
		\begin{equation*}
		\sum_{x \in \mathcal{C}_n} \mathbb{P}\left(\left|\widetilde{S}_{\ell,\gamma}(x)-\E[\widetilde{S}_{\ell,\gamma}(x)]\right|>\varepsilon\right) \leq 4d_n n ^{-\frac{\varepsilon_0^2}{4}+\frac{\nu}{2}}.
		\end{equation*}
		Finally, an appropriate choice of $\varepsilon_0$ yields to an upper bound of order $n^{-3/2}$ and  by Borel-Cantelli's lemma we get the result.
	\end{proof}
	\begin{lemma}\label{lem3}
		Under Hypotheses \hyperref[H1]{H1}, \hyperref[H2i]{H2 i)} and \hyperref[H4]{H4}, for $\ell=1,2$ and $\gamma=0,1,2$, we have 
		\begin{equation*}
		\sup_{x \in \mathcal{C}} \left|\E[\widehat{S}_{\ell,\gamma}(x)]\right| =\text{O}\left(h^\gamma\right).
		\end{equation*}
	\end{lemma}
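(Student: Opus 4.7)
The plan is to split $\E[\widehat{S}_{\ell,\gamma}(x)]$ through the tilde version, i.e.\
\[
\E[\widehat{S}_{\ell,\gamma}(x)] = \E[\widetilde{S}_{\ell,\gamma}(x)] + \E\bigl[\widehat{S}_{\ell,\gamma}(x) - \widetilde{S}_{\ell,\gamma}(x)\bigr],
\]
so that the Kaplan--Meier denominator appears only in a remainder that one argues is negligible compared with $h^{\gamma}$. The main term is handled by a textbook kernel-bias calculation, while the remainder is handled by transferring the a.s.\ bound of Lemma 5.1 to an $L^{1}$ bound.

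For the main term, I would condition on $X_{1}$ and use the identity established right after (2.2), namely $\E[T_{1}^{\star,-\ell}\mid X_{1}] = \E[T_{1}^{-\ell}\mid X_{1}] = r_{\ell}(X_{1})/f(X_{1})$. This yields
\[
\E[\widetilde{S}_{\ell,\gamma}(x)] = \frac{1}{h}\int (u-x)^{\gamma} K_{h}(u-x)\, r_{\ell}(u)\, du.
\]
The substitution $v = (u-x)/h$ and a first-order Taylor expansion of $r_{\ell}$ around $x$ (justified by H4) give
\[
\E[\widetilde{S}_{\ell,\gamma}(x)] = h^{\gamma} r_{\ell}(x) \int v^{\gamma} K(v)\, dv + h^{\gamma+1}\int v^{\gamma+1} K(v)\, r_{\ell}^{\prime}(\xi)\, dv,
\]
which under H2 i) and H4 gives $\sup_{x \in \mathcal{C}} |\E[\widetilde{S}_{\ell,\gamma}(x)]| = \text{O}(h^{\gamma})$. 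This mirrors exactly the computation already carried out at the end of the proof of Lemma 5.1.

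For the remainder, I would start from the pointwise bound obtained inside the proof of Lemma 5.1,
\[
|\widehat{S}_{\ell,\gamma}(x) - \widetilde{S}_{\ell,\gamma}(x)| \leq \frac{1}{\overline{G}^{2}(\tau_{F})}\sup_{t\leq \tau_{F}}|\overline{G}_{n}(t) - \overline{G}(t)| \cdot |\mathcal{D}_{2}(x)|,
\]
take expectations and apply Cauchy--Schwarz. The factor $\bigl(\E\sup_{t\leq\tau_{F}}|\overline{G}_{n}-\overline{G}|^{2}\bigr)^{1/2}$ is $o(1)$ (the quantity is bounded by $1$ and tends almost surely to $0$ by Lemma 4.2 of \cite{Deuheuvels2000}, so dominated convergence applies), while $\bigl(\E|\mathcal{D}_{2}(x)|^{2}\bigr)^{1/2}$ is $\text{O}(h^{\gamma})$ by the same bias computation as above applied to $T_{1}^{-\ell}|X_{1}-x|^{\gamma} K_{h}(X_{1}-x)/h$ under (3.2), H3 and H4. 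The product is therefore $o(h^{\gamma})$ uniformly in $x \in \mathcal{C}$ and is absorbed into the main term.

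The main obstacle is precisely this remainder: since $\overline{G}_{n}$ is built from the whole sample, one cannot factor $1/\overline{G}_{n}$ out of the expectation. My way around it is to first bound in the almost-sure norm, reducing the stochastic dependence to the Kaplan--Meier deviation $\sup_{t\leq\tau_{F}}|\overline{G}_{n} - \overline{G}|$, and only then integrate using Cauchy--Schwarz. If the a.s.\ rate transferred by dominated convergence were not sharp enough for some $\gamma$, a fallback would be to work on the event $\{\overline{G}_{n}(\tau_{F}) \geq \overline{G}(\tau_{F})/2\}$, on which $\widehat{T}_{i}^{\star,-\ell}$ is deterministically bounded by a constant thanks to (3.2), and to control the complement (which has probability $1-o(1)$ at an exponential rate) by the trivial estimate $|\widehat{S}_{\ell,\gamma}(x)| \leq C/\overline{G}_{n}(\tau_{F})$ combined with a truncation argument.
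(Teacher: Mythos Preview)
Your ``main term'' calculation for $\E[\widetilde{S}_{\ell,\gamma}(x)]$ is exactly the paper's entire proof. In fact, if you look at the first line of the paper's argument, it writes $\E[\widehat{S}_{\ell,\gamma}(x)]$ but immediately evaluates it as $\frac{1}{nh}\E\bigl[\sum_{i}T_i^{\star,-\ell}(X_i-x)^{\gamma}K_h(X_i-x)\bigr]$, i.e.\ with $T_i^{\star,-\ell}$ rather than $\widehat{T}_i^{\star,-\ell}$. In other words the hat in the statement of Lemma~5.3 is a typo for a tilde; this is consistent with how the lemma is actually used in the decompositions (5.1)--(5.2) and (5.3)--(5.4), where only $\E[\widetilde{S}_{\ell,\gamma}]$ ever appears. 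So the Kaplan--Meier remainder you worry about is simply not present in the quantity the paper bounds.

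Your extra work on $\E[\widehat{S}_{\ell,\gamma}-\widetilde{S}_{\ell,\gamma}]$ is therefore unnecessary for the paper's purposes, but it does address the literal statement. Two remarks on that part. First, the pointwise inequality you import from the proof of Lemma~5.1 carries the constant $1/\overline{G}^{2}(\tau_F)$, which tacitly replaces $\overline{G}_n(\tau_F)$ by $\overline{G}(\tau_F)$; this is fine eventually almost surely but not for every $n$, so for an $L^{1}$ bound you do need the truncation you sketch in your fallback (the event $\{\overline{G}_n(\tau_F)\ge \overline{G}(\tau_F)/2\}$ plus an exponential bound on its complement). Second, your claim $(\E|\mathcal{D}_2(x)|^{2})^{1/2}=\text{O}(h^{\gamma})$ is correct, but it is not ``the same bias computation'': you also need the variance of $\mathcal{D}_2$, which is of order $h^{2\gamma-1}/n=h^{2\gamma}\cdot(nh)^{-1}=o(h^{2\gamma})$ under~H1. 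With those two points patched your route is sound; it is just doing strictly more than the paper intends.
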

	\begin{proof}[Proof of Lemma 5.3.] 	
		Using the conditional expectation property, Taylor expansion, under Hypotheses \hyperref[H1]{H1}, \hyperref[H2i]{H2 i)} and \hyperref[H4]{H4} and using the fact that  $\E[T_1^{\star,-\ell}|X_1=u]=\mu_{\ell}(u)$ with $\mu_{\ell}(u)=r_{\ell}(u)/f(u)$, we get 
		\begin{equation*}
		\begin{aligned}
		\left|\E[\widehat{S}_{\ell,\gamma}(x)]\right|&=\left|\frac{1}{nh} \E\left[\sum_{i=1}^{n} T_i^{\star,-\ell} (X_i-x)^\gamma K_h(X_i-x)\right] \right|\\
		&=\left|\frac{1}{h} \E\left[ T_1^{\star,-\ell} (X_1-x)^\gamma K_h(X_1-x)\right] \right|\\
		&=\left|\frac{1}{h} \int (u-x)^\gamma K_h(u-x)r_{\ell}(u)du \right|\\
		&=\left|\int (vh)^\gamma K(v)r_{\ell}(x+vh)dv \right|\\
		&=h^\gamma\left|\int v^\gamma K(v)\{r_{\ell}(x)+vh r^{\prime}_{\ell}(\xi)\}dv \right|\\
		&\leq h^\gamma r_{\ell}(x) \left|\int v^\gamma K(v)dv\right|+h^{\gamma +1}\left| \int v^{\gamma+1}  K(v) r^{\prime}_{\ell}(\xi)dv \right|.\\
		&=O\left( h^\gamma\right). \\
		\end{aligned}
		\end{equation*}
	\end{proof}
	\noindent   Now, combining  on the one hand \hyperref[lem1]{Lemma 5.1} and \hyperref[lem3]{Lemma 5.3} and on the other hand \hyperref[lem2]{Lemma 5.2} and \hyperref[lem3]{Lemma 5.3}, we conclude the proof of \hyperref[prop1]{Proposition 1}
\end{proof}
\begin{proof}[Proof of Proposition 2.]
	Let remark the decomposition for $\ell=1,2$: 
	\begin{equation*}
	\begin{aligned}
	\widetilde{\mu}_{\ell}(x)-\E[\widetilde{\mu}_{\ell}(x)]&=\widetilde{S}_{2,2}(x)\widetilde{S}_{\ell,0}(x)-\widetilde{S}_{2,1}(x)\widetilde{S}_{\ell,1}(x)-\E\left[\widetilde{S}_{2,2}(x)\widetilde{S}_{\ell,0}(x)-\widetilde{S}_{2,1}(x)\widetilde{S}_{\ell,1}(x)\right]\\
	&=\widetilde{S}_{2,2}(x)\widetilde{S}_{\ell,0}(x)-\E\left[\widetilde{S}_{2,2}(x)\widetilde{S}_{\ell,0}(x)\right] -\left\{\widetilde{S}_{2,1}(x)\widetilde{S}_{\ell,1}(x)- \E\left[\widetilde{S}_{2,1}(x)\widetilde{S}_{\ell,1}(x)\right]\right\}\\
	&=:\mathcal{E}_{\ell,1}(x)-\mathcal{E}_{\ell,2}(x).
	\end{aligned}
	\end{equation*}
	\noindent	On the one side $\{\mathcal{E}_{\ell,1},\;\text{for}\; \ell=1,2\}$, we have 
	\begin{multline}\label{decomp1}
	\mathcal{E}_{\ell,1}(x)=\left(\widetilde{S}_{2,2}(x)-\E\left[\widetilde{S}_{2,2}(x)\right]\right)\left(\widetilde{S}_{\ell,0}(x)-\E\left[\widetilde{S}_{\ell,0}(x)\right]\right)+\E\left[\widetilde{S}_{2,2}(x)\right]\left(\widetilde{S}_{\ell,0}(x)-\E\left[\widetilde{S}_{\ell,0}(x)\right]\right)\\
	+\E\left[\widetilde{S}_{\ell,0}(x)\right]\left(\widetilde{S}_{2,2}(x)-\E\left[\widetilde{S}_{2,2}(x)\right]\right)-\C\left(\widetilde{S}_{\ell,0}(x),\widetilde{S}_{2,2}(x)\right).
	\end{multline}
	\noindent On the other side $\{\mathcal{E}_{\ell,2},\;\text{for}\; \ell=1,2\}$, we have 
	\begin{multline}\label{decomp2}
	\mathcal{E}_{\ell,2}(x)=\left(\widetilde{S}_{2,1}(x)-\E\left[\widetilde{S}_{2,1}(x)\right]\right)\left(\widetilde{S}_{\ell,1}(x)-\E\left[\widetilde{S}_{\ell,1}(x)\right]\right)+\E\left[\widetilde{S}_{2,1}(x)\right]\left(\widetilde{S}_{\ell,1}(x)-\E\left[\widetilde{S}_{\ell,1}(x)\right]\right)\\
	+\E\left[\widetilde{S}_{\ell,1}(x)\right]\left(\widetilde{S}_{2,1}(x)-\E\left[\widetilde{S}_{2,1}(x)\right]\right)-\C\left(\widetilde{S}_{2,1}(x),\widetilde{S}_{\ell,1}(x)\right).
	\end{multline}
	
	\noindent It remains to study each term of the decomposition (\ref{decomp1}) and (\ref{decomp2}). We want to mention that most of the terms are studied in \hyperref[lem2]{Lemma 5.2} and \hyperref[lem3]{Lemma 5.3}. The covariance terms are studied in the two following Lemmas.
	\begin{lemma}\label{lem4}
		Under Hypotheses \hyperref[H1]{H1}, \hyperref[H2]{H2} and \hyperref[H4]{H4}, for $\ell=1,2$ and $n$ large enough, we have
		\begin{equation*}
		\C\left(\widetilde{S}_{\ell,0}(x),\widetilde{S}_{2,2}(x)\right)=\text{o}\left(\sqrt{\frac{\log n}{nh}}\right).
		\end{equation*}
	\end{lemma}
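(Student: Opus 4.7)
The plan is to exploit that $\widetilde{S}_{\ell,0}(x)$ and $\widetilde{S}_{2,2}(x)$ are averages of i.i.d.\ summands, so that bilinearity of covariance together with independence across indices collapses the double sum to its diagonal. Writing $A_i:=T_i^{\star,-\ell}K_h(X_i-x)$ and $B_i:=T_i^{\star,-2}(X_i-x)^2 K_h(X_i-x)$, we get
\[
\C\bigl(\widetilde{S}_{\ell,0}(x),\widetilde{S}_{2,2}(x)\bigr)=\frac{1}{n^2h^2}\sum_{i,j}\C(A_i,B_j)=\frac{1}{nh^2}\,\C(A_1,B_1),
\]
so the whole task is to show $\C(A_1,B_1)=\text{O}(h^3)$ uniformly in $x\in\mathcal{C}$; combining this with \hyperref[H1]{H1} then gives $\text{O}(h/n)=\text{o}\bigl(\sqrt{\log n/(nh)}\bigr)$, since $(h/n)\big/\!\sqrt{\log n/(nh)}=\sqrt{h^3/(n\log n)}\to 0$.

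First I would control the cross-moment $\E[A_1B_1]$. Using $\delta_1^{2}=\delta_1$, the product $T_1^{\star,-\ell}T_1^{\star,-2}$ equals $\delta_1 Y_1^{-\ell-2}/\overline{G}^{2}(Y_1)$, and conditioning on $(T_1,X_1)$ together with~(\ref{indep}) leaves a single surviving factor $\overline{G}^{-1}(T_1)$:
\[
\E[A_1B_1]=\E\!\left[\frac{T_1^{-\ell-2}}{\overline{G}(T_1)}(X_1-x)^2K_h^{2}(X_1-x)\right].
\]
Using $\overline{G}(T_1)\ge\overline{G}(\tau_F)>0$ and the change of variable $v=(u-x)/h$, this reduces to $\frac{h^{3}}{\overline{G}(\tau_F)}\int v^{2}K^{2}(v)\,r_{\ell+2}(x+vh)\,dv$. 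The boundedness of $r_{\ell+2}$ on a neighbourhood of $\mathcal{C}$ provided by \hyperref[H4]{H4} (which covers $r_3$ and $r_4$), together with $\int v^{2}K^{2}(v)\,dv<\infty$ from \hyperref[H2ii]{H2 ii)}, yields $\E[A_1B_1]=\text{O}(h^{3})$ uniformly in $x\in\mathcal{C}$.

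Next, the product $\E[A_1]\E[B_1]$ is handled through the identity $\E[T_1^{\star,-\ell}\mid X_1]=\mu_\ell(X_1)=r_\ell(X_1)/f(X_1)$ established in \hyperref[sect 2]{Section~2}, which gives $\E[A_1]=\int r_\ell(u)K_h(u-x)\,du=\text{O}(h)$ and $\E[B_1]=\int r_2(u)(u-x)^{2}K_h(u-x)\,du=\text{O}(h^{3})$ by the same change of variable and a first order Taylor expansion of $r_\ell$ and $r_2$ around $x$ (justified by \hyperref[H4]{H4} and \hyperref[H2i]{H2 i)}). Their product is thus $\text{O}(h^{4})$, negligible against $\text{O}(h^{3})$. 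Combining the two estimates yields $\C(A_1,B_1)=\text{O}(h^{3})$ uniformly in $x$, hence the claimed rate.

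I do not expect a genuine obstacle, as the computation is of the same flavour as those in \hyperref[lem2]{Lemma 5.2} and \hyperref[lem3]{Lemma 5.3}. The only mildly delicate point is the bookkeeping of the censoring factors: the product $T_1^{\star,-\ell}T_1^{\star,-2}$ introduces $\overline{G}^{-2}(Y_1)$, while the conditional mean of $\delta_1$ cancels only one factor $\overline{G}(T_1)$, so the assumption $\overline{G}(\tau_F)>0$ must be invoked explicitly to produce a deterministic bound. Uniformity in $x\in\mathcal{C}$ is automatic since every integrand obtained after the change of variable is uniformly bounded on $\mathcal{C}$ thanks to \hyperref[H4]{H4}.
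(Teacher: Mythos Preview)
Your proposal is correct and follows essentially the same route as the paper: both reduce the covariance to the diagonal contribution $\frac{1}{nh^{2}}\C(A_1,B_1)$, bound the cross-moment $\E[A_1B_1]$ by $\text{O}(h^{3})$ via the conditioning argument and $\overline{G}(\tau_F)>0$, and handle $\E[A_1]\E[B_1]$ through the estimates of \hyperref[lem3]{Lemma~5.3}. The only cosmetic difference is that the paper expands $\E[\widetilde{S}_{\ell,0}\widetilde{S}_{2,2}]$ into diagonal and off-diagonal sums before subtracting $\E[\widetilde{S}_{\ell,0}]\E[\widetilde{S}_{2,2}]$, whereas you invoke bilinearity of covariance directly to collapse to the diagonal; your formulation is slightly cleaner but the substance is identical.
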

	\begin{proof}[Proof of Lemma 5.4.] By definition for $\ell=1,2$, we have
		\begin{equation*}
		\begin{aligned}
		\C\left(\widetilde{S}_{\ell,0}(x),\widetilde{S}_{2,2}(x)\right)&=\E\left[\widetilde{S}_{\ell,0}(x)\widetilde{S}_{2,2}(x)\right]-\E\left[\widetilde{S}_{\ell,0}(x)\right]\E\left[\widetilde{S}_{2,2}(x)\right].\\
		\end{aligned}
		\end{equation*}
		\noindent The proof will be made in three steps.
		\begin{itemize}
			\item [{\it Step 1.}] It is easy to see that   under \hyperref[H2]{H2} and \hyperref[H4]{H4} for $\ell=1,2$ and  using  \hyperref[lem3]{Lemma 5.3}, we get $\E\left[\widetilde{S}_{\ell,0}(x)\right]=O(1)$. Similarly, under \hyperref[H2i]{H2 i)} and \hyperref[H4]{H4} for $\ell=2$ we have \\$\E\left[\widetilde{S}_{2,2}(x)\right]=O(h^2)$.
			Now, it remains to study the quantity $\E\left[\widetilde{S}_{\ell,0}(x)\widetilde{S}_{2,2}(x)\right]$. For that, it suffices to   remark that
		\end{itemize}
		\begin{equation*}
		\begin{aligned}
		\E\left[\widetilde{S}_{\ell,0}(x)\widetilde{S}_{2,2}(x)\right]&=\frac{1}{(nh)^2}\E\left[\sum_{j=1}^{n} T_j^{\star,-\ell} K_h(X_j-x)\sum_{i=1}^{n} T_i^{\star,-2}(X_i-x)^2 K_h(X_j-x)\right]\\
		&=\frac{1}{(nh)^2}\left\{n\E\left[T_1^{\star,-\ell-2} (X_1-x)^2 K_h^2(X_1-x)\right]\right.\\
		&\left.+n(n-1)\E\left[T_1^{\star,-\ell} K_h(X_1-x)\right]\E\left[T_1^{\star,-2}(X_1-x)^2 K_h(X_1-x)\right]\right\}.
		\end{aligned}
		\end{equation*}
		\begin{itemize} 
			\item [{\it Step 2.}] 	Here after  denote by $\varrho=\ell+2,\; \text{for} \;\ell=1,\, 2$. First, we have to calculate
		\end{itemize}
		\begin{eqnarray}\label{E(T^ell)}
		\E\left[T_1^{\star,-\varrho}|X_1=u\right]&=&\E\left[\frac{\delta_1Y_1^{-\varrho}}{\overline{G}^2(Y_1)}|X_1=u\right]\nonumber\\
		&=&\E\left[\frac{T_1^{-\varrho}}{\overline{G}^2(T_1)}\E[\mathds{1}_{\{T_1 \leq R_1\}}|T_1]|X_1=u\right]\nonumber\\
		&=&\E\left[\frac{T_1^{-\varrho}}{\overline{G}(T_1)}|X_1=u\right]\nonumber\\
		& \leq& \frac{1}{\overline{G}(\tau_F)} \int t^{-\varrho}f_{T_1|X_1}(t|u)dt.
		\end{eqnarray}
		\begin{itemize} 
			\item [{\it Step 3.}] Then, using the conditional expectation property, Taylor expansion and under \hyperref[H2ii]{H2 ii)} and \hyperref[H4]{H4}, we have 
		\end{itemize}
		\begin{equation*}
		\begin{aligned}
		\E\left[T_1^{\star,-\varrho} (X_1-x)^2 K_h^2(X_1-x)\right]&=\E\left[ (X_1-x)^2 K_h^2(X_1-x)\E\left[T_1^{\star,-\varrho}|X_1\right]\right]\\
		&=\int (u-x)^2 K_h^2(u-x)\E\left[T_1^{\star,-\varrho}|X_1=u\right]f(u)du\\
		&\leq \frac{1}{\overline{G}(\tau_F)} \int (u-x)^2 K_h^2(u-x)\int t^{-\varrho}f_{T_1|X_1}(t|u)dt f(u)du\\
		&= \frac{1}{\overline{G}(\tau_F)} \int (u-x)^2 K_h^2(u-x) r_{\varrho}(u)du\\
		&= \frac{h^3}{\overline{G}(\tau_F)} \int v^2 K^2(v) r_{\varrho}(x+vh)dv\\
		&= \frac{h^3}{\overline{G}(\tau_F)} \int v^2 K^2(v) r_{\varrho}(x)dv+\frac{h^4}{\overline{G}(\tau_F)} \int v^3 K^2(v) r_{\varrho}^{\prime}(\xi)dv.\\
		\end{aligned}
		\end{equation*}		
		Finally, combining the  three steps, we get 
		\begin{equation*}
		\begin{aligned}
		\C\left(\widetilde{S}_{\ell,0}(x),\widetilde{S}_{2,2}(x)\right)
		=\text{O}\left(\frac{h}{n}\right)
		\end{aligned}
		\end{equation*}
		which is negligible with respect to $\displaystyle \sqrt{\frac{\log n}{nh}}$.
	\end{proof}
	
	\begin{lemma}\label{lem5}
		Under Hypotheses \hyperref[H1]{H1}, \hyperref[H2i]{H2 i)} and \hyperref[H4]{H4}, for $\ell=1,2$ and $n$ large enough, we have
		\begin{equation*}
		\C\left(\widetilde{S}_{\ell,1}(x),\widetilde{S}_{2,1}(x)\right)=\text{o}\left(\sqrt{\frac{\log n}{nh}}\right).
		\end{equation*}
	\end{lemma}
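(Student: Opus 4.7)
The plan is to follow the three-step template of the proof of Lemma 5.4 essentially verbatim, replacing the powers $(X_i-x)^0$ and $(X_i-x)^2$ by the common power $(X_i-x)^1$. Writing out the covariance via
\[
\C\bigl(\widetilde{S}_{\ell,1}(x),\widetilde{S}_{2,1}(x)\bigr)
= \E\bigl[\widetilde{S}_{\ell,1}(x)\widetilde{S}_{2,1}(x)\bigr]
 -\E\bigl[\widetilde{S}_{\ell,1}(x)\bigr]\,\E\bigl[\widetilde{S}_{2,1}(x)\bigr],
\]
I would first apply Lemma 5.3 with $\gamma=1$ to each factor on the right, obtaining $\E[\widetilde{S}_{\ell,1}(x)]=\mathrm{O}(h)$ and $\E[\widetilde{S}_{2,1}(x)]=\mathrm{O}(h)$, so their product is $\mathrm{O}(h^2)$.

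For the remaining term, I would expand the double sum as
\[
\E\bigl[\widetilde{S}_{\ell,1}(x)\widetilde{S}_{2,1}(x)\bigr]
=\frac{1}{nh^{2}}\E\!\left[T_1^{\star,-\ell-2}(X_1-x)^{2}K_h^{2}(X_1-x)\right]
+\frac{n-1}{n}\,\E\bigl[\widetilde{S}_{\ell,1}(x)\bigr]\,\E\bigl[\widetilde{S}_{2,1}(x)\bigr],
\]
the off-diagonal factorization coming from the i.i.d.\ assumption. Setting $\varrho:=\ell+2$ and arguing exactly as in Step~2--Step~3 of Lemma~5.4 (namely, the bound $\E[T_1^{\star,-\varrho}\mid X_1=u]\le r_{\varrho}(u)/[f(u)\overline G(\tau_F)]$ coupled with the change of variable $u=x+vh$, Taylor expansion, and Hypotheses \hyperref[H2i]{H2 i)} and \hyperref[H4]{H4}), I obtain
\[
\E\!\left[T_1^{\star,-\varrho}(X_1-x)^{2}K_h^{2}(X_1-x)\right]=\mathrm{O}(h^{3}),
\]
so the diagonal contribution is of order $h/n$.

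Subtracting $\E[\widetilde{S}_{\ell,1}(x)]\E[\widetilde{S}_{2,1}(x)]$ kills the leading off-diagonal piece up to a factor $1/n$, leaving a term of order $h^{2}/n$, which is absorbed by $h/n$. Hence
\[
\C\bigl(\widetilde{S}_{\ell,1}(x),\widetilde{S}_{2,1}(x)\bigr)=\mathrm{O}\!\left(\frac{h}{n}\right).
\]
The conclusion then reduces to a bandwidth check: under Hypothesis \hyperref[H1]{H1} one has
\[
\frac{h/n}{\sqrt{\log n/(nh)}}=\sqrt{\frac{h^{3}}{n\log n}}\longrightarrow 0,
\]
yielding the claimed $\mathrm{o}(\sqrt{\log n/(nh)})$ rate.

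I do not anticipate any real obstacle here: the argument is structurally identical to Lemma~5.4, and the main sanity check is merely to verify that the first-order cancellations from the symmetry of $K$ (which occur when $\gamma=1$) do not interfere with the bound. Since the diagonal term, governed by $K^{2}$, dominates and its magnitude $h^{3}$ is the same as in Lemma~5.4, no symmetry refinement is actually required to get $\C=\mathrm{O}(h/n)$.
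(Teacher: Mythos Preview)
Your proposal is correct and follows essentially the same route as the paper: write the covariance as $\E[\widetilde{S}_{\ell,1}\widetilde{S}_{2,1}]-\E[\widetilde{S}_{\ell,1}]\E[\widetilde{S}_{2,1}]$, use Lemma~5.3 with $\gamma=1$ to bound each marginal expectation by $\mathrm{O}(h)$, split the product expectation into the diagonal term (bounded by $\mathrm{O}(h^3)$ via the conditional-expectation/change-of-variable/Taylor argument of Lemma~5.4, Steps~2--3) and the off-diagonal term, and conclude $\C=\mathrm{O}(h/n)=\mathrm{o}(\sqrt{\log n/(nh)})$. Your explicit off-diagonal identity and final bandwidth ratio check are slightly more detailed than the paper's write-up, but the argument is the same.
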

	
	
	\begin{proof}[Proof of Lemma 5.5.] In the same way, for $\ell=1,2$, write
		\begin{equation*}
		\begin{aligned}
		\C\left(\widetilde{S}_{\ell,1}(x),\widetilde{S}_{2,1}(x)\right)&=\E\left[\widetilde{S}_{\ell,1}(x)\widetilde{S}_{2,1}(x)\right]-\E\left[\widetilde{S}_{\ell,1}(x)\right]\E\left[\widetilde{S}_{2,1}(x)\right].\\
		\end{aligned}
		\end{equation*}We will use the following steps:
		\begin{itemize}
			\item [Step 4.]  It is easy to see that from \hyperref[lem3]{Lemma 5.3} under \hyperref[H2]{H2} and \hyperref[H4]{H4} for $\ell=1,2$ we have $\E\left[\widetilde{S}_{\ell,1}(x)\right]=O(h)$. Similarly, under \hyperref[H2i]{H2 i)} and \hyperref[H4]{H4} for $\ell=2$ we have $\E\left[\widetilde{S}_{2,1}(x)\right]=O(h)$.	
			Now, it remains to study the quantity $\E\left[\widetilde{S}_{\ell,1}(x)\widetilde{S}_{2,1}(x)\right]$. To do that, Let us remark that
			\begin{equation*}
			\begin{aligned}
			\E\left[\widetilde{S}_{\ell,1}(x)\widetilde{S}_{2,1}(x)\right]&=\frac{1}{(nh)^2}\E\left[\sum_{j=1}^{n} T_j^{\star,-\ell} (X_j-x) K_h(X_j-x)\sum_{i=1}^{n} T_i^{\star,-2}(X_i-x) K_h(X_i-x)\right]\\
			&=\frac{1}{(nh)^2} \left\{n\E\left[T_1^{\star,-\ell-2} (X_1-x)^2 K_h^2(X_1-x)\right]\right.\\
			&\left.+n(n-1)\E\left[T_1^{\star,-\ell} (X_1-x) K_h(X_1-x)\right]\times \E\left[T_1^{\star,-2}(X_1-x) K_h(X_1-x)\right]\right\}.\\
			\end{aligned}
			\end{equation*}
			\item [Step 5.]   We use the same notation $\varrho=\ell+2$ to avoid any confusion and by  (\ref{E(T^ell)}) we have 
			\begin{equation*}
			\begin{aligned}
			\E\left[T_1^{\star,-\varrho} (X_1-x)^2 K_h^2(X_1-x)\right]&=\E\left[ (X_1-x)^2 K_h^2(X_1-x)\E\left[T_1^{\star,-\varrho}|X_1\right]\right]\\
			&=\int (u-x)^2 K_h^2(u-x)\E\left[T_1^{\star,-\varrho}|X_1=u\right]f(u)du\\
			&= \frac{1}{\overline{G}(\tau_F)} \int (u-x)^2 K_h^2(u-x)\int t^{-\varrho}f_{T_1,X_1}(t,u)dt du\\
			&= \frac{1}{\overline{G}(\tau_F)} \int (u-x)^2 K_h^2(u-x) r_{\varrho}(u)du\\
			&= \frac{h}{\overline{G}(\tau_F)} \int (vh)^2 K^2(v) r_{\varrho}(x+vh)dv\\
			&= \frac{h^3}{\overline{G}(\tau_F)} \int v^2 K^2(v) r_{\varrho}(x+vh)dv,\\
			\end{aligned}
			\end{equation*}
		\end{itemize} 
		\noindent 	under \hyperref[H2ii]{H2 ii)} and \hyperref[H4]{H4}, we get O$(h^3)$.
		
		\noindent 	    Combining steps 4 and 5 we have 
		\begin{equation*}
		\begin{aligned}
		\C\left(\widetilde{S}_{\ell,1}(x),\widetilde{S}_{2,1}(x)\right)
		=\text{O}\left(\frac{h}{n}\right)
		\end{aligned}
		\end{equation*}
		which is negligible with respect to $\displaystyle \sqrt{\frac{\log n}{nh}}$.
	\end{proof}
	\noindent	Finally, combining \hyperref[lem2]{Lemma 5.2} and \hyperref[lem3]{Lemma 5.3} in the proof of \hyperref[prop1]{Proposition 1} with  \hyperref[lem4]{Lemma 5.4} and \hyperref[lem5]{Lemma 5.5}, we get the result of the \hyperref[prop2]{Proposition 2}.
\end{proof}
\begin{proof}[Proof of Proposition 3.] Using the conditional expectation property for  $\ell=1,\, 2$ \\   we get  $\E[T^{\star,-2}_1T^{\star,-\ell}_2|X_1,X_2]=\mu_{2}(X_1) \mu_{\ell}(X_2)$. Then 
	we have
	\begin{equation*}
	\begin{aligned}
	\E[\widetilde{\mu}_\ell(x)]-r_\ell(x)r_2(x)&=\E\left[\widetilde{S}_{2,2}(x)\widetilde{S}_{\ell,0}(x)-\widetilde{S}_{2,1}(x)\widetilde{S}_{\ell,1}(x)\right]-r_\ell(x)r_2(x)\\
	&\hspace*{-.7in} =\frac{1}{(nh)^2}\E\left[ \left( \sum_{i=1}^n (X_i-x)^2 T^{\star,-2}_iK_h(X_i-x)\right)\left( \sum_{j=1}^n T^{\star,-\ell}_jK_h(X_j-x)\right)\right.\\
	&\hspace*{-.7in} \left.-\left( \sum_{i=1}^n (X_i-x) T^{\star,-2}_iK_h(X_i-x)\right)\left( \sum_{j=1}^n T^{\star,-\ell}_j (X_j-x)K_h(X_j-x)\right)\right]-r_\ell(x)r_2(x)\\
	&\hspace*{-.7in}=h^{-2}\E\left[ \left( (X_1-x)^2 T^{\star,-2}_1K_h(X_1-x)\right)\left( T^{\star,-\ell}_2K_h(X_2-x)\right)\right.\\
	&\hspace*{-.7in}-\left. \left( (X_1-x) T^{\star,-2}_1K_h(X_1-x)\right)\left( T^{\star,-\ell}_2 (X_2-x)K_h(X_2-x)\right)\right]-r_\ell(x)r_2(x) \\
	&\hspace*{-.7in}=h^{-2} \int \int K(u-x)K(v-x) \left( (u-x)^2-(u-x)(v-x)\right)\\
	&\hspace*{-.7in}\times \E[T^{\star,-2}_1T^{\star,-\ell}_2|X_1=u,X_2=v]f(u)f(v)dudv-r_\ell(x)r_2(x)\\
	&\hspace*{-.7in}=h^{-2} \int \int K(u-x)K(v-x) \left( (u-x)^2-(u-x)(v-x)\right)r_2(u)r_\ell(v)dudv-r_\ell(x)r_2(x).\\
	\end{aligned}
	\end{equation*}
	By a change of variable, we get 
	\begin{equation*}
	\begin{aligned}
	&\hspace*{.5in}=\int \int K(t)K(s) \left( (th)^2-(th)(sh)\right)r_2(x+th)r_\ell(x+sh)dtds-r_\ell(x)r_2(x)\\
	&\hspace*{.5in}=h^2 \int \int (t^2-ts) K(t)K(s)r_2(x+th)r_\ell(x+sh)dtds-r_\ell(x)r_2(x)\\
	&\hspace*{.5in}=h^2 \int \int (t^2-ts) K(t)K(s)\{r_2(x+th)r_\ell(x+sh)-r_\ell(x)r_2(x)\}dtds\\
	\end{aligned}
	\end{equation*}
	Using Taylor expansion, it is easy to see for $\ell=1,2$	
	\begin{equation*}
	\begin{aligned}
	r_2(x+th)r_\ell(x+sh)-r_\ell(x)r_2(x)&=(r_2(x+th)-r_2(x))(r_\ell(x+sh)-r_\ell(x))\\
	&+r_\ell(x)(r_2(x+th)-r_2(x))+r_2(x)(r_\ell(x+sh)-r_\ell(x))\\
	&=h^2 t s r^{\prime}_2(\xi_1)r^{\prime}_\ell(\xi_2)+thr^{\prime}_2(\xi_1)r_\ell(x)+shr^{\prime}_\ell(\xi_2)r_2(x)
	\end{aligned}
	\end{equation*}
	\noindent where $\xi^\prime_1 \in ]x,x+th[$ and  $\xi^\prime_2 \in ]x,x+sh[$. Then, we have 
	\begin{equation*}
	\begin{aligned}
	\E[\widetilde{\mu}_\ell(x)]-r_\ell(x)r_2(x)&=h^2 \int \int (t^2-ts) K(t)K(s)\{h^2 t s r^{\prime}_2(\xi_1)r^{\prime}_\ell(\xi_2)+thr^{\prime}_2(\xi_1)r_\ell(x)+shr^{\prime}_\ell(\xi_2)r_2(x)\}dt ds\\
	&\hspace*{-.7in}=h^4 \int\int ts(t^2-ts) K(t)K(s)r^{\prime}_2(\xi_1)r^{\prime}_\ell(\xi_2)dt ds+h^3\int\int t(t^2-ts) K(t)K(s)r^{\prime}_2(\xi_1)r_\ell(x)dt ds\\
	&\hspace*{-.7in}+h^3\int\int s(t^2-ts) K(t)K(s)r^{\prime}_\ell(\xi_2)r_2(x)dt ds\\
	&\hspace*{-.7in}=h^4 \left(\left(\int t^3K(t)r^{\prime}_2(\xi_1)dt\right)\left(\int sK(s)r^{\prime}_\ell(\xi_2)ds\right)-\left(\int t^2 K(t)r^{\prime}_2(\xi_1)dt\right)\left(\int s^2 K(s) r^{\prime}_\ell(\xi_2)ds\right)\right)\\
	&\hspace*{-.7in}+h^3 \left(\left(r_\ell(x)\int t^3K(t)r^{\prime}_2(\xi_1)dt\right)\left(\int K(s)ds\right)-\left(r_\ell(x)\int s K(s)ds\right)\left(\int t^2 K(t) r^{\prime}_2(\xi_1)dt\right)\right)\\
	&\hspace*{-.7in}+h^3 \left(\left(r_2(x)\int t^2K(t)dt\right)\left(\int sK(s)r^{\prime}_\ell(\xi_2)ds\right)-\left(r_2(x)\int t K(t)dt\right)\left(\int s^2 K(s) r^{\prime}_\ell(\xi_2)ds\right)\right).\\
	\end{aligned}
	\end{equation*}
	\noindent Under the hypotheses \hyperref[H2i]{H2 i)} and \hyperref[H4]{H4} for $\ell=1,2$, the result can be deduced directly from the last equality $\text{O}(h^3)$.
\end{proof}
\section{Appendix}
\noindent {\bf Corollary A.8.} in \cite{Ferraty2006} p. 234 \label{coro}. \\ Let $U_i$ be a sequence of independent r.v. with zero mean. If $\forall \; m \geq 2$, $\exists \; C_m>0$, $\E[|U_1^m|] \leq C_m a^{2(m-1)}$, we have 
	\begin{equation*}
	\forall \varepsilon>0, \;\;\; \mathbb{P}\left( \left|\sum_{i=1}^{n} U_i\right|> n\varepsilon\right) \leq 2 \exp \left\{-\frac{\varepsilon^2 n}{2a^2(1+\varepsilon)}\right\}.
	\end{equation*}
\section{Conclusion}
In this paper we establish the uniform strong consistency with rate  for  the local linear relative error regression estimator over a compact set, when the variable of interest is subject to random right censoring. A large simulation study was conducted through which our estimator performance was highlighted in spite of well known boundary effects of kernel estimation. On the one hand, for a practical point of view the results indicate the lack of flexibility in estimating a function using traditional approaches. On the other hand, the proposed estimates are closest to the true curve. In conclusion, the LLRER method has more advantage than the CR and LLCR such as the efficiency in presence of outliers and censorship compared to the two other methods. Finally, we point out that the bias term  appears to inhabit, however the combination of the two methods LL and RER has revealed several terms which do not allow to obtain a standard result of order one or two. Conversely, we can say that the reduction of the  bias is highlighted.

\end{document}